\theoremstyle{plain}
\newtheorem{theor10}{Theorem}
\newenvironment{theor1}
  {\pushQED{\qed}\begin{theor10}}
  {\popQED\end{theor10}}
\newtheorem{cor10}{Corollary}
\newenvironment{cor1}
  {\pushQED{\qed}\begin{cor10}}
  {\popQED\end{cor10}}
\newtheorem{theor0}{Theorem}[section]
\newenvironment{theor}
  {\pushQED{\qed}\begin{theor0}}
  {\popQED\end{theor0}}
\newtheorem{lem0}[theor0]{Lemma}
\newenvironment{lem}
  {\pushQED{\qed}\begin{lem0}}
  {\popQED\end{lem0}}
\newtheorem{prop0}[theor0]{Proposition}
\newenvironment{prop}
  {\pushQED{\qed}\begin{prop0}}
  {\popQED\end{prop0}}
\newtheorem{cor0}[theor0]{Corollary}
\theoremstyle{definition}
\newtheorem{defin0}[theor0]{Definition}
\newenvironment{defin}
  {\pushQED{\qed}\begin{defin0}}
  {\popQED\end{defin0}}
\newtheorem{rems0}[theor0]{Remarks}
\newenvironment{rems}
  {\pushQED{\qed}\begin{rems0}}
  {\popQED\end{rems0}}
\newtheorem{rem0}[theor0]{Remark}
\newenvironment{rem}
  {\pushQED{\qed}\begin{rem0}}
  {\popQED\end{rem0}}
\numberwithin{equation}{section}
\newcommand{\N}{\mathbb N}
\newcommand{\e}{\varepsilon}
\newcommand{\Lc}{\mathcal{L}}
\newcommand{\Vc}{\mathcal V}
\newcommand{\R}{\mathbb R}
\newcommand{\E}{\mathbb E}
\newcommand{\C}{\mathbb C}
\newcommand{\F}{\mathcal F}
\newcommand{\loc}{{\operatorname{loc}}}
\newcommand{\Id}{\operatorname{Id}}
\newcommand{\per}{{\operatorname{per}}}
\newcommand{\ee}{e}
\newcommand{\Aa}{\boldsymbol a}
\newcommand{\bb}{{\boldsymbol b}}
\newcommand{\cc}{{\boldsymbol c}}
\newcommand{\Ld}{\operatorname{L}}
\newcommand{\supp}{\operatorname{supp}}
\newcommand{\step}[1]{\noindent \textit{Step} #1.}
\newcommand{\expec}[1]{\mathbb{E}\left[ #1 \right]}
\newcommand{\expecm}[1]{\mathbb{E}\big[ #1 \big]}
\newcommand{\no}[1]{{\sout{#1}}}
\title[Long-time homogenization of the wave equation]{A spectral ansatz for the long-time homogenization of the wave equation}
\author[M. Duerinckx]{Mitia Duerinckx}
\author[A. Gloria]{Antoine Gloria}
\author[M. Ruf]{Matthias Ruf}
\address[Mitia Duerinckx]{Universit\'e Libre de Bruxelles, D\'epartement de Math\'ematique, 1050 Brussels, Belgium}
\email{mitia.duerinckx@ulb.be}
\address[Antoine Gloria]{Sorbonne Universit\'e, CNRS, Universit\'e de Paris, Laboratoire Jacques-Louis Lions, 75005~Paris, France \& Institut Universitaire de France \& Universit\'e Libre de Bruxelles, D\'epartement de Math\'ematique, 1050~Brussels, Belgium}
\email{antoine.gloria@sorbonne-universite.fr}
\address[Matthias Ruf]{Ecole Polytechnique Fédérale de Lausanne, {Section de math\'ematiques, 1015~Lausanne}, Switzerland}
\email{matthias.ruf@epfl.ch}
\begin{document}
\selectlanguage{english}

\begin{abstract}
Consider the wave equation with heterogeneous coefficients in the homogenization regime.
At large times, the wave interacts in a nontrivial way with the heterogeneities, giving rise to effective dispersive effects. The main achievement of the present work is  a new ansatz for the long-time two-scale expansion inspired by spectral analysis. Based on this spectral ansatz, we extend and refine all previous results in the field, proving homogenization up to optimal timescales with optimal error estimates, and covering all the standard  assumptions on heterogeneities (both periodic and stationary random settings).
\end{abstract}

\maketitle
\setcounter{tocdepth}{1}
\tableofcontents

\section{Introduction}

\subsection{General overview}
Let $d\ge1$ be the space dimension and let $\Aa$ be a symmetric coefficient field on $\R^d$ that satisfies the boundedness and ellipticity properties
\begin{align}\label{eq:unif-ell}
|\Aa(x)\xi|\le|\xi|,\qquad\xi\cdot\Aa(x)\xi\ge\lambda|\xi|^2,\qquad\text{for all $x,\xi\in\R^d$},
\end{align}
for some $\lambda>0$.
We shall consider both the case when $\Aa$ is periodic and the case when $\Aa$ is a stationary ergodic random field (in the latter case, we restrict to a Gaussian model for illustration, cf.~Definition~\ref{def:gauss}).
Given an impulse 
$f\in C^\infty_c((0,\infty);\Ld^2(\R^d))$,
we consider the ancient solution of the associated linear wave equation
\begin{equation}\label{eq:hyperb}
\left\{\begin{array}{ll}
(\partial_t^2-\nabla\cdot\Aa(\tfrac\cdot\e)\nabla)u_\e=f,&\text{in $\R\times\R^d$},\\
u_\e=f=0,&\text{for $t<0$,}
\end{array}\right.
\end{equation}
in the homogenization regime $0<\e \ll 1$, and we are interested in the accurate description of the long-time behavior of the flow. The reason why we focus on ancient solutions (with~$u_\e=f=0$ for $t<0$) is to ensure the well-preparedness of the wave and to avoid propagating time oscillations; see {the} discussion in Section~\ref{sec:ill-posed}.
{Note that we can also consider the case of strongly elliptic systems up to obvious modifications. It is however crucial that coefficients be symmetric (to avoid exponentially growing modes in the Floquet-Bloch theory).}

On short timescales~$t=O(1)$,   standard theory~\cite{Brahim-Otsmane-92} ensures that the flow can be approximated to leading order by the ancient solution of a homogenized wave equation,
\begin{equation}\label{eq:standard-hom}
\left\{\begin{array}{ll}
(\partial_t^2-\nabla\cdot\bar\Aa\nabla)\bar u=f,&\text{in $\R\times\R^d$},\\
\bar u=f=0,&\text{for $t<0$,}
\end{array}\right.
\end{equation}
where the (constant) effective coefficient~$\bar\Aa$ is the same as for the homogenization of the corresponding steady-state problem. This means that homogenization and time evolution decouple to leading order on short timescales.
As first understood by Santosa and Symes~\cite{SaSy-91}, {this is however no longer the case on longer timescales}: more precisely, a non-trivial interaction between homogenization and time evolution appears as soon as $t\ge O(\e^{-2})$ in the periodic setting,
leading to a dispersive correction to the naïve{ly} homogenized wave equation~\eqref{eq:standard-hom}.

In the periodic setting, the first rigorous analysis of this phenomenon is  based on spectral theory, more specifically on Floquet--Bloch theory, and is due to Lamacz~\cite{Lamacz-11} in one space dimension, and to Dohmal, Lamacz, and Schweizer~\cite{DLS-14,DLS-15}
in higher dimension.
They proved the convergence to some suitable dispersive homogenized wave equation up to times~\mbox{$t\ll \e^{-3}$}.
Due to the use of the Floquet--Bloch theory, it was not clear that this approach could be applied beyond the periodic setting to other standard frameworks for homogenization (such as  quasi-periodic or random coefficient fields). To treat such cases, Benoit and the second author developed in~\cite{BG} an approximate version of the Floquet--Bloch theory, which was inspired by~\cite{ABV-16} and by the observation that the derivative of the Bloch wave with respect to the wave number at 0 is a multiple of the standard corrector in homogenization. Extending this to all orders, {\cite{BG}} introduced a notion of `Taylor--Bloch waves', which approximately diagonalize the elliptic operator~$-\nabla\cdot\Aa\nabla$ at low wavenumber. In contrast to the standard Floquet--Bloch analysis, this approximate spectral approach is easily {transferred} to the random setting as it does not rely on the existence of exact Bloch waves (see \cite{DGS} for an extension of these ideas to other regimes).
In the periodic case, this allowed {the authors of \cite{BG}} to derive a whole hierarchy of higher-order homogenized equations that are valid to leading order up to times~$t\le O(\e^{-\ell})$ for any $\ell\ge 0$. 
These higher-order homogenized equations are  well-posed up to truncating high-frequencies.
In the random case, they also managed to cover the case of random coefficient fields, for which a homogenized description can only be found up to some maximal timescale $t=O(\e^{-\ell_*})$.
Although this analysis allows  reaching  long timescales, it does not provide approximations with optimal accuracy. There is indeed a strong limitation in the analysis: since the impulse $f$ in equation~\eqref{eq:hyperb} is not adapted to $O(\e)$ oscillations of the coefficients, Bloch waves at low wave number only describe the solution to leading order, and Bloch waves at higher wave number should further be taken into account for a finer description. The main difficulty is that Bloch waves at higher wave number are not easily related to ``correctors'' in homogenization, so that it was unclear how to improve the accuracy in~\cite{BG}.

Shortly after \cite{BG},  following a variant of classical two-scale expansion methods~\cite{BLP-78}, Allaire, Lamacz, and Rauch~\cite{ALR} and Abdulle and Pouchon~\cite{Pouch-17,Pouch-19}
obtained similar results in the periodic setting and  did improve the accuracy in the description of the wave flow on long timescales in terms of some two-scale expansion.
Interestingly, and as opposed to the equations obtained by approximate spectral theory, the homogenized equations obtained in \cite{ALR,Pouch-17,Pouch-19} have to be significantly reformulated several times before they give rise to a well-posed problem (this is called the ``criminal approximation'' in \cite{ALR}). Because the number of such reformulations increases with the order of accuracy, and although arbitrarily long times and optimal accuracy can be reached, 
the ``infinite-order'' homogenized operator they implicitly define cannot be inverted even for very smooth functions.

On the one hand, approaches inspired by spectral theory are physically-motivated (for waves equations, the spectrum is of the essence),  yield well-posed equations valid for long times, but so far were limited in terms of accuracy. On the other hand, approaches based on systematic two-scale expansions allow { reaching} both long times and optimal accuracy, but essentially require as many reformulations as the order of accuracy to obtain a well-posed equation, which prevents one from inverting the associated ``infinite-order'' homogenized operator (in other words, the bound on the homogenization error is not sharp).
To sum up, a physically-motivated two-scale expansion to reach long times and optimal accuracy was still missing.

The main aim of this paper is to introduce a full spectral two-scale expansion for \eqref{eq:hyperb}, that extends \cite{BG} to any order and allows {{us} to invert the ``infinite-order'' homogenized operator for smooth enough impulse{{s}~$f$.
This spectral two-scale expansion is defined in Theorem~\ref{th:main-per}, whereas the infinite-order result is given in Corollary~\ref{cor:summable}. This infinite-order result shows that the analysis we do here is indeed paying off (it cannot be obtained using the systematic two-scale expansions of  \cite{ALR,Pouch-17,Pouch-19}).
The main insight of this work is encapsulated in Proposition~\ref{prop:Bloch}, which reformulates the 
explicit formula for the solution of \eqref{eq:hyperb} based on Floquet--Bloch theory in a way that leverages an intrinsic two-scale expansion (which we call \emph{spectral} two-scale expansion). 
A fundamental physical feature of this spectral two-scale expansion is the following: corrections due to the fact that the impulse~$f$ is not adapted to $O(\e)$ oscillations of the coefficients
are local with respect to $f$,
see Remark~\ref{rem:vanish}. This original feature should be of interest to the engineering community, as it means in particular that the expansion actually reduces (essentially) to the much simpler one used in~\cite{BG} outside the support of the impulse.
The main merit of this work is to work out this spectral two-scale expansion and its combinatorial structure.
The adaptation from the periodic to the random setting is essentially routine to the expert in stochastic homogenization.
It is however important and shows the limitation of homogenization techniques for waves in random media --- which is why a detailed statement is included in Theorem~\ref{th:main2}.

Last,
we also thoroughly discuss the two-scale approach of \cite{ALR,Pouch-17,Pouch-19} (which we call \emph{geometric} two-scale expansion, cf.~Section~\ref{sec:geom-insight}), and we relate it to the new spectral two-scale expansion. This essentially amounts to comparing
redundant hierarchies of corrector equations, which we do in an algorithmic way. As an output, we
improve the error analysis of~\cite{ALR,Pouch-17,Pouch-19}, cf.~\eqref{eq:2scale-concl-perOpt}.

The rest of this introduction is organized as follows: In Section~\ref{sec:main-res}, we state our main results on long-time homogenization, both in the periodic and in the random settings, comparing the results obtained with the spectral and the geometric approaches.
Next, in Section~\ref{sec:discuss}, we comment on the important question of the well-posedness of the formal homogenized equations (cf.~\eqref{eq:goal2} and~\eqref{eq:form-hom} below). In Sections~\ref{sec:Bloch} and~\ref{sec:geom-insight}, we motivate the special form of the spectral and the  {geometric} two-scale expansions. We conclude in Section~\ref{sec:ill-posed} with a discussion of the well-preparedness assumption for~\eqref{eq:hyperb}, going beyond the framework of ancient solutions.

\newpage
\subsection*{Notation}
\begin{enumerate}[$\bullet$]
\item We write $\nabla=(\nabla_j)_{1\le j\le d}$ for the gradient with respect to the space variable, $\partial_t$ for the time derivative, and $(D_j)_{0\le j\le d}$ for the space-time gradient with $D_0=\partial_t$ and $D_j=\nabla_j$ for $1\le j\le d$. {Given $n\in\N$, we denote by $\nabla^n=(\nabla^n_{i_1\ldots i_n})_{1\leq i_1,\ldots,i_n\leq d}$ the $n$th-order spatial derivative.}
\item For a vector field $F$ and a matrix field $G$, we set $(\nabla F)_{jl}=\nabla_lF_j$ and  $(\nabla\cdot G)_j=\nabla_l G_{jl}$ (we systematically use Einstein's summation convention for repeated indices). {We also denote by $(\nabla F)^T$ the pointwise transpose{d} field, $(\nabla F)^T_{jl}=(\nabla F)_{lj}$.}
\item {Given to matrices $A,B\in\R^{m\times n}$, we denote by $A:B$ their inner product defined by $A:B=A_{ij}B_{ij}$  (again using Einstein's summation convention).}
For $\xi\in\R^d$ and an $n$th-order tensor $T=(T_{j_1\ldots j_n})_{1\le j_1,\ldots,j_n\le d}$, we use the notation \mbox{$T\odot \xi^{\otimes n}=T_{j_1\ldots j_n}\xi_{j_1}\ldots\xi_{j_n}$} for the contraction. For two symmetric tensors $T=(T_{j_1\ldots j_n})_{ j_1,\ldots,j_n}$ and $S=(S_{j_1\ldots j_m})_{j_1,\ldots,j_m}$, we define their symmetric tensor product $T\otimes_sS$ as the $(n+m)$th-order symmetric tensor characterized by $(T\otimes_sS)\odot \xi^{\otimes(n+m)}=(T\odot\xi^{\otimes n})(S\odot\xi^{\otimes m})$ for all $\xi\in\R^d$.
\item  The spatial Fourier transform of a function $f$ defined on $\R^d$ is denoted by $\hat f(\xi)=\int_{\R^d}e^{-i\xi\cdot x}u(x)\,dx$, and the inverse Fourier transform by $f(x)=\int_{\R^d}e^{i\xi\cdot x}\hat f(\xi)\,d^*\xi$ with $d^*\xi=(2\pi)^{-d}d\xi$.
\item We set $\langle s \rangle:= (1+|s|^2)^{1/2}$, and we similarly {define the pseudo-differential operator $\langle\nabla\rangle$ with Fourier symbol $(1+|\xi|^2)^\frac12$}. {More generally, given a continuous map $\chi:\R^d\to\R$, we define the pseudo-differential operator $\chi(\nabla)$ with Fourier-symbol $\chi(\xi)$. Moreover, given an $n$th-order tensor $T=(T_{j_1\ldots j_n})_{1\leq j_1,\ldots,j_n\leq d}$ we define the differential operator $T\odot\nabla^n=T_{j_1\ldots j_n}\nabla^n_{j_1\ldots j_n}$.}
\item $\N$ stands for the set of nonnegative integers. For a multi-index $n=(n_1,\ldots,n_k)\in\N^k$, we let $|n|=n_1+\ldots+n_k$.
\item $\expec{\cdot}$ stands for expectation in the random setting, and is also used in the periodic setting as a short-hand notation for averaging on the unit cell $Q=(-\frac12,\frac12)^d$, $\expec{X}=\fint_Q X$.
\item We denote by $C\ge1$ any constant that only depends on the dimension $d$ and on the ellipticity constant $\lambda$ in~\eqref{eq:unif-ell}. We use the notation $\lesssim$ (resp.~$\gtrsim$) for $\le C\times$ (resp.~$\ge\frac1C\times$) up to such a multiplicative constant $C$. We write $\ll$ (resp.~$\gg$) for $\le \frac1C\times$ (resp.~$\ge C\times$) up to a sufficiently large multiplicative constant $C$. We add subscripts to indicate dependence on other parameters. {We use Landau's big-$O$ notation in a less rigorous way to indicate the scaling behavior of quantities, where the precise bounds can depend on many parameters.}
\item The ball centered at $x$ of radius $r$ in $\R^d$ is denoted by $B_r(x)$, and we set $B(x)=B_1(x)$, $B_r=B_r(0)$, and $B=B_1(0)$.
\item When defining hierarchies of correctors and of homogenized coefficients, we take the convention that all quantities that are not defined are implicitly set to zero: e.g.\@ $\psi^n=0$ for $n<0$ and $\bar\bb^n=0$ for $n\le0$ in Definition~\ref{def:spec-corr-psi}, etc.
\end{enumerate}

\subsection{Main results: long-time homogenization}\label{sec:main-res}
Our main results yield long-time error estimates for the two-scale expansion of the heterogeneous wave 
equation~\eqref{eq:hyperb} with optimal accuracy up to the optimal maximal timescale, { with optimal norms (see below the statements for a precise discussion of optimality)}. We separately consider the periodic and the random setting\no{s}; the case of quasi-periodic coefficient fields could be treated as well but is skipped for shortness.

\subsubsection{Periodic setting}
We start with the case when the coefficient field~$\Aa$ is periodic on the unit cell $Q=(-\frac12,\frac12)^d$. {The main result of this contribution provides} a two-scale expansion with optimal error estimate up to times $t\le O(\e^{-\ell})$ for any~$\ell\ge0$.
This is obtained by extending the spectral approach of~\cite{BG} to higher-order accuracy in form of a suitable two-scale expansion: while the error estimate in~\cite{BG} saturated at $O(\e)$, we now reach accuracy $O(\e^\ell t)$, cf.~\eqref{eq:2scale-concl-per}. 
The formal homogenized equation~\eqref{eq:goal2} takes the form of a dispersive correction of~\eqref{eq:standard-hom} and the discussion of its well-posedness is postponed to Section~\ref{sec:discuss}; note that the homogenized differential operator {in \eqref{eq:goal2} below} is necessarily symmetric in the sense that $\bar\bb^{k}=0$ for all~$k$ even.\footnote{This is a consequence of the fact this operator appears as the homogenization of the self-adjoint wave operator; see~\cite[Proposition~1]{BG} or our direct proof of Proposition~\ref{prop:even-coeff} below.}
The proof of this main result is displayed in Section~\ref{sec:spectral}, together with the definition of spectral correctors. 

\begin{theor1}\label{th:main-per}
Let $\Aa$ be $Q$-periodic.
{There exist sequences of spectral correctors~$\{\psi^{n}\}_{n}$ and~$\{\zeta^{n,m}\}_{n,m}$
obtained as solutions of elliptic problems on the periodic cell $Q$,   a sequence of homogenized tensors $\{\bar\bb^{n}\}_{n}$, and a sequence of Fourier multiplier $\{\gamma_{n}\}_n$ with $|\gamma_{n}(\xi)| \le 1$, cf.~Definition~\ref{def:spec-corr-psi}, such that the following holds.}
{For all $\ell \ge 1$ and} for any impulse $f\in C^\infty(\R;H^\infty(\R^d))$ with $f=0$ for $t<0$, the ancient solution $u_\e$ of the heterogeneous wave equation~\eqref{eq:hyperb}
is accurately described by the  `spectral two-scale expansion'
\begin{multline}\label{eq:goal1}
S_\e^\ell[\bar u_\e^\ell,f]\,:=\,\sum_{n=0}^\ell \e^n  \psi^n(\tfrac\cdot\e)\odot  \gamma_{\ell}(\e\nabla)\nabla^n\bar u_\e^\ell
\\
+\e^3 \sum_{2m=0}^{\ell-3}(-1)^m\e^{2m} \sum_{n=0}^{\ell-3-2m}\e^n\zeta^{n,m}(\tfrac\cdot\e)\odot  \gamma_{\ell} (\e \nabla)    \nabla^{n+1}\partial_t^{2m} f,
\end{multline}
where $\bar u_\e^\ell$ is an ancient solution of the following formal homogenized equation on \mbox{$\R\times\R^d$}, in one of the meanings provided in Lemma~\ref{lem:apriori-baru-sp},
\begin{equation}\label{eq:goal2}
\partial_t^2\bar u_\e^\ell-\nabla\cdot\Big(\bar\Aa+\sum_{k=2}^\ell\bar\bb^k\odot(\e\nabla)^{k-1}\Big)\nabla\bar u_\e^\ell=f+O(\e^\ell).
\end{equation}
(Here, $\psi^n$ is an $n$th-order tensor field, $\zeta^{n,m}$ is an $(n+1)$th-order tensor field, and $\bar\bb^k$ is a matrix-valued $(k-1)$th-order tensor --- see the notation section for the contraction $\odot$ of tensors of the same order.)
More precisely, we have the following error estimate: for all $\ell\ge1$ and $t\ge0$,
\begin{align}\label{eq:2scale-concl-per}
& \|u_\e^t-S^{\ell}_\e[\bar u_\e^{\ell;t},f^t]\|_{\Ld^2(\R^d)}+\|D(u_\e^t-S^{\ell}_\e[\bar u_\e^{\ell;t},f^t])\|_{\Ld^2(\R^d)}\\
&\hspace{7cm} \,\le\,(\e C)^\ell \langle t\rangle\,    \|\langle D\rangle^{C\ell}   f\|_{\Ld^1((0,t);\Ld^2(\R^d))}.\nonumber\qedhere
\end{align}
\end{theor1}

\begin{rem}\label{rem:vanish}
The spectral two-scale expansion~\eqref{eq:goal1} has an important property of physical interest:  the second sum contains a series of terms that are all local with respect to the impulse~$f$, and this local contribution vanishes outside the support of~$f$.
This specific form for the expansion owes to the Bloch wave analysis of Section~\ref{sec:Bloch}. 
 It also illustrates the superiority of the present analysis over \cite{BG}: although outside the support of the impulse the expansion in~\cite{BG} has essentially the same form as~\eqref{eq:goal1} (up to the pseudo-differential operator $\gamma_\ell(\e\nabla)$), it does not reach accuracy beyond {the order}~$O(\e)$.
\end{rem}
\begin{rem}
The above two-scale expansion~\eqref{eq:goal1} involves the pseudo-differential operator $\gamma_\ell(\e \nabla)$. Although convenient in the analysis, it might not be so in practice (e.g.\@ for numerical purposes). 
As shown in~\eqref{e.expansion-gamma}, we can expand $\gamma_\ell(\e \nabla)=1+\sum_{k=2}^\infty \e^k \gamma_{\ell}^k \odot \nabla^{k}$ for some explicit tensors~$\{\gamma_{\ell}^k\}_k$, so that for the purpose of~\eqref{eq:2scale-concl-per} it can be approximated to the required accuracy $O(\e^\ell)$ by a finite-order differential operator. Note that the pseudo-differential operator has some intrinsic spectral interpretation, playing the role of a normalization of Bloch waves, cf.~Section~\ref{sec:Bloch}.
\end{rem}

{The scaling $\e^\ell \langle t\rangle$ of the error  \eqref{eq:2scale-concl-per} is optimal.
This can be seen in this periodic setting on the explicit spectral formula \eqref{eq:full-decomp-per-u} for the solution (with a first order Taylor expansion of the time integrand) for a forcing term $f$ compactly supported in time. The scaling of the error with respect to the norm of $f$ involves the optimal order of derivatives wrt to $\ell$ (we indeed need at least $\ell$ derivatives to define $\bar u^\ell$ -- we have not tried to optimize the multiplicative constant $C$). In particular it implies the summability of the two-scale expansion (or the invertibility of the associated ``infinite-order homogenized operator'' as pointed out in the introduction) in form of the following corollary.}

\begin{cor1}\label{cor:summable}
Let $\Aa$ be $Q$-periodic. Given an impulse $f\in C^\infty(\R;H^\infty(\R^d))$ that decays as~$t\downarrow-\infty$ in the sense of $\int_{-\infty}^0|t|\|\langle D\rangle^kf^t\|_{\Ld^2(\R^d)}\,dt<\infty$ for any $k\ge0$,
consider the unique solution of the associated heterogeneous wave equation
\begin{equation*}
\left\{\begin{array}{ll}
(\partial_t^2-\nabla\cdot\Aa(\tfrac\cdot\e)\nabla)u_\e=f,&\text{in $\R\times\R^d$},\\
{\lim_{t\downarrow -\infty}u_\e^t \equiv 0},&\text{in $\R^d$}.
\end{array}\right.
\end{equation*}
Then, in terms of the two-scale expansion~\eqref{eq:goal1}, with $\bar u_\e^\ell$ now denoting the corresponding solution of the formal homogenized equation~\eqref{eq:goal2} in the sense of Lemma~\ref{lem:apriori-baru-sp}, we have for all $t\in \R$,
\begin{equation}\label{e.bound-summable-1}
\|D(u_\e^t-S^{\ell}_\e[\bar u_\e^{\ell;t},f^t])\|_{\Ld^2(\R^d)} \,
\le \,
(\e C)^\ell \int_{-\infty}^t(t-s)\|\langle D\rangle^{C\ell}   f^s\|_{\Ld^2(\R^d)}\,ds.
\end{equation}
In particular, if for instance the impulse takes the form $f^t(x)=f_1(t)f_2(x)$, where $f_1$ has a smooth and compactly supported Fourier transform on~$\R$ and where $f_2$ has a compactly supported Fourier transform on~$\R^d$,
then the two-scale expansion is summable in the following sense: for $0<\e\ll_f 1$ small enough (only depending on~$d,\lambda,f$), for all $T\in\R$,
\begin{equation}\label{e.bound-summable-2}
\lim_{\ell\uparrow\infty}\,\sup_{t\le T}\,\|D(u_\e^t-S^{\ell}_\e[\bar u_\e^{\ell;t},f^t])\|_{\Ld^2(\R^d)} =0. \qedhere
\end{equation}
\end{cor1}

For comparison,
we display the corresponding result that can be obtained instead of Theorem~\ref{th:main-per} when using a more standard ``geometric'' approach to devise a two-scale expansion as in~\cite{ALR,Pouch-17,Pouch-19} (see Section~\ref{sec:geom-insight} for an explanation of the naming ``geometric'').
We slightly improve the error estimates of~\cite{ALR} thanks to {the} use of suitable flux correctors (see Remark~\ref{rem:flux-corr}). Yet, the scaling with respect to $\ell$ in the error estimate~\eqref{eq:2scale-concl-per-H} is much worse than the one in Theorem~\ref{th:main-per} by a factor $\ell^\ell$, thus showing the advantage of the spectral approach.
(In particular, Corollary~\ref{cor:summable} does not follow from Theorem~\ref{th:main-per2}.)
The proof is displayed in Section~\ref{sec:structure}, together with the definition of hyperbolic correctors.

\begin{theor1}\label{th:main-per2}
Let $\Aa$ be $Q$-periodic.
There exists a sequence of hyperbolic correctors $\{\phi^{n,m}\}_{n,m}$ obtained as solutions of elliptic problems on the periodic cell $Q$, and there exists a sequence of homogenized coefficients $\{\bar\Aa^{n,m}\}_{n,m}$, cf.~Definition~\ref{defi:BC}, such that the following holds.
For all $\ell\ge 1$ and for any impulse $f\in C^\infty(\R;H^\infty(\R^d))$ with $f=0$ for $t<0$, the ancient solution $u_\e$ of the heterogeneous wave equation~\eqref{eq:hyperb} is accurately described by the `hyperbolic two-scale expansion'
\begin{equation}\label{e.2s-hyp-discuss}
H^\ell_\e[\bar v_\e^\ell]\,:=\,\sum_{n=0}^\ell\sum_{m=0}^{\ell-n}\e^{n+m} \phi^{n,m}(\tfrac\cdot\e)\odot\nabla^n\partial_t^m\bar v_\e^\ell,
\end{equation}
where $\bar v_\e^\ell$ is an ancient solution of the following formal homogenized equation on $\R\times\R^d$, up to a suitable revamping, cf.~\eqref{eq:homog-lim/ref}, in one of the meanings provided in Lemma~\ref{lem:apriori-baru-sp},
\begin{equation}\label{eq:form-hom}
\partial_t^2\bar v^\ell_\e-\nabla\cdot\Big(\sum_{n=1}^\ell\sum_{m=0}^{\ell-n}\bar\Aa^{n,m}\odot(\e\nabla)^{n-1}(\e\partial_t)^m\Big)\nabla \bar v^\ell_\e \,=\, f+O(\e^\ell).
\end{equation}
(Here, $\phi^{n,m}$ is an $n$th-order tensor and $\bar\Aa^{n,m}$ is a matrix-valued $(n-1)$th-order tensor.)
More precisely, we have the following error estimate: for all $\ell \ge1$ and $t\ge 0$,
\begin{align}\label{eq:2scale-concl-per-H}
& \|u_\e^t-H^\ell_\e[\bar v_\e^{\ell;t}]\|_{\Ld^2(\R^d)}+\|D(u_\e^t-H^\ell_\e[\bar v_\e^{\ell;t}])\|_{\Ld^2(\R^d)}\\
&\hspace{5.5cm}\,\le\,(\e C\ell)^\ell \langle t\rangle\, \|\langle D\rangle^{C\ell}\langle\e CD\rangle^{\ell^2} f\|_{\Ld^1((0,t);\Ld^2(\R^d))}.\nonumber\qedhere
\end{align}
\end{theor1}

It is instructive to compare spectral and {geometric} two-scale expansions~\eqref{eq:goal1} and~\eqref{e.2s-hyp-discuss}.
Outside the support of~$f$, \eqref{eq:goal1} provides an approximation of $u_\e$ to order $O(\e^\ell)$ by a sum of $\ell+1$ terms, whereas \eqref{e.2s-hyp-discuss} reaches a similar order of approximation with a sum of~$\frac14(\ell+1)(\ell+4)$ terms (note that $\phi^{n,m}$ vanishes for $m$ odd, cf.~Definition~\ref{defi:BC}). This difference between~$O(\ell)$ and~$O(\ell^2)$ terms in the expansions illustrates the more intrinsic character of the spectral two-scale expansion and its superiority in terms of estimates.
There is obviously a link between spectral and hyperbolic correctors, and spectral correctors~$\{\psi^n,\zeta^{n,m}\}_{n,m}$ can {indeed} be recovered as linear combinations of hyperbolic correctors $\{\phi^{n,m}\}_{n,m}$ with coefficients that are \emph{nonlinear functions} of hyperbolic homogenized coefficients $\{\bar\Aa^{n,m}\}_{n,m}$. Working out the precise algorithmic relation between the spectral and geometric approaches is  quite involved and necessarily algorithmic. This is the subject of Section~\ref{sec:fromGeom2Spec}.
In particular, in combination with Theorem~\ref{th:main-per}, one can improve~\eqref{eq:2scale-concl-per-H} a posteriori to 
\begin{equation} \label{eq:2scale-concl-perOpt}
 \|u_\e^t-H^\ell_\e[\bar v_\e^{\ell;t}]\|_{\Ld^2(\R^d)}+\|D(u_\e^t-H^\ell_\e[\bar v_\e^{\ell;t}])\|_{\Ld^2(\R^d)} \,
\le \,
(\e C)^\ell \langle t\rangle\,    \|\langle D\rangle^{C\ell}   f\|_{\Ld^1((0,t);\Ld^2(\R^d))},
\end{equation}
thus removing the spurious factor $\ell^\ell$.
This constitutes a significant strengthening of the error analysis of~\cite{ALR} and could not be obtained from the geometric two-scale expansion approach only.

\medskip
\subsubsection{Random setting}
We turn to the case of a stationary ergodic random coefficient field~$\Aa$. For simplicity and illustration, we shall focus on the following Gaussian model.
\begin{defin}\label{def:gauss}
The coefficient field $\Aa$ is said to be {\it Gaussian with parameter $\beta>0$} if it has the form $\Aa=h(G)$ for some   $h\in\text{Lip}(\R^k)^{d\times d}$ with $k\ge1$ and for some $\R^k$-valued centered stationary Gaussian random field $G$ such that the covariance function
\[c(x):=\expec{G(x)\otimes G(0)},\qquad c:\R^d\to\R^{k\times k},\]
has $\beta$-algebraic decay at infinity, $|c(x)|\,\le\,(1+|x|)^{-\beta}$.
\end{defin}

The analysis of Theorem~\ref{th:main-per} can be repeated in this Gaussian setting, and leads to the following two-scale expansion result with optimal error estimate up to the optimal maximal timescale. Note that the maximal timescale depends on the decay rate $\beta$ for correlations and saturates in case of integrable decay $\beta>d$ (which corresponds to the strongest mixing possible in the Gaussian setting and yields the central limit theorem scaling for large-scale averages of the coefficients).
This result extends~\cite{BG} to higher-order accuracy.
The proof is displayed in Section~\ref{sec:proof-random}.
Exactly as in the periodic case above, a corresponding result could also be obtained in terms of the  geometric two-scale expansion; we skip the detail for shortness.

\begin{theor1}\label{th:main2}
Let $\Aa$ be Gaussian with parameter $\beta>0$ in the above sense, and define
\[\ell_*:=\lceil\tfrac{\beta\wedge d}2\rceil.\]
We can construct spectral correctors $\{\psi^n\}_{n\le\ell_*}$ and $\{\zeta^{n,m}\}_{n+2m\le\ell_*-3}$ as well-behaved solutions of some hierarchy of elliptic problems on the probability space,  homogenized tensors $\{\bar\bb^n\}_{n\le\ell_*}$, and a Fourier multiplier $\gamma_\ell$ with $|\gamma_\ell(\xi)|\le1$, cf.~Appendix~\ref{sec:rand-cor}, such that the following holds.
For any impulse $f\in C^\infty(\R;H^\infty(\R^d))$ with $f=0$ for~$t<0$, the ancient solution $u_\e$ of the heterogeneous wave equation~\eqref{eq:hyperb}
is accurately described by the corresponding spectral two-scale expansion $S_\e^\ell[\bar u_\e^\ell,f]$ given in~\eqref{eq:goal1}, where $\bar u_\e^\ell$ is an ancient solution of the corresponding formal homogenized equation~\eqref{eq:goal2} in one of the meanings provided in Lemma~\ref{lem:apriori-baru-sp}.
More precisely, we have the following error estimate: for all $t\ge0$ and $q<\infty$,
\begin{multline*}
\|u_\e^t-S^{\ell_*}_\e[\bar u_\e^{\ell_*;t},f^t]\|_{\Ld^q(\Omega;\Ld^2(\R^d))}+\|D(u_\e^t-S^{\ell_*}_\e[\bar u_\e^{\ell_*;t},f^t])\|_{\Ld^q(\Omega;\Ld^2(\R^d))}\\
\,\lesssim_q\,\|\langle\cdot\rangle\langle D\rangle^{C\ell}f\|_{\Ld^1(\R;\Ld^2(\R^d))}\times\left\{\begin{array}{lll}
\langle t\rangle\e^{\frac d2}|\!\log\e|^{\frac12}&:&\beta>d,~\text{$d$ even},\\
\langle t\rangle\e^{\frac d2}\big(\langle t\rangle^{\frac12}\wedge\e^{-\frac12}\big)&:&\beta > d,~\text{$d$ odd},\\
\langle t\rangle\e^{\frac d2}|\!\log\e|&:&\beta=d,~\text{$d$ even},\\
\langle t\rangle\e^{\frac d2}\big((\langle t\rangle|\!\log\e|)^{\frac12}\wedge\e^{-\frac12}\big)&:&\beta = d,~\text{$d$ odd},\\
\langle t\rangle\e^{\frac\beta2}|\!\log\e|^{\frac12}&:&\beta<d,~\beta \in 2\N,\\
\langle t\rangle\e^{\frac\beta2}\big(\langle t\rangle^{1-\{\frac\beta2\}}\wedge\e^{-\{\frac\beta2\}}\big)&:&\beta<d,~\beta \notin 2\N,
\end{array}\right.
\end{multline*}
with the short-hand notation $\{\frac\beta2\}=\frac\beta2-\lfloor \frac \beta 2 \rfloor\in[0,1)$ for the fractional part of $\frac\beta2$.
\end{theor1}

{For short times $t=O(1)$, in the {setting of integrable covariance} $\beta>d$, the above two-scale expansion error estimate is  $O(\e^{d/2})$ (up to a logarithmic correction in even dimensions). 
This is optimal and random fluctuations of $u_\e$ become dominant beyond this scaling since fluctuations naturally have the scaling $O(\e^{d/2})$ of the central limit theorem, cf.~e.g.~\cite{DGO1}.
For longer times, the two-scale expansion error further depends on whether the dimension is odd or even due to the growth of correctors: {For $\beta>d$}, the two-scale expansion error remains negligible $\ll1$, in the sense that the wave can be accurately described in terms of some homogenized equation, only provided that
\begin{equation}\label{e.ballistic-regime}
t \ll \left\{\begin{array}{lll}
\e^{-\frac 13}&:&d=1,\\
\e^{-\frac d2}|\!\log \e|^{-\frac12}&:&\text{$d$ even},\\
\e^{-\frac{d-1}2}&:&\text{$d$ odd $>1$}.
\end{array}\right.
\end{equation}
Up to such timescales, the above result can be used in particular to derive ballistic transport properties of the wave, see~\cite{BG,DGS}, as well as to derive spectral information in a suitable low-energy regime, see~\cite{DG23a}.
{ Although the two-scale expansion cannot be pushed further in general (except in the case of very specific structure of the coefficient field, see e.g.~the remark above Corollary~1.3 in \cite{DG23a}), ballistic transport could hold for longer times (see e.g.~the case of matched impedance in~\cite{SS-24}).
}

\begin{rems}[Extensions]\label{rem:ill-prepared}\nopagebreak\
\begin{enumerate}[$\bullet$]
\item {\it Nontrivial initial data:}
In Theorems~\ref{th:main-per} and~\ref{th:main2}, we focus on well-prepared data, or equivalently, on ancient solutions of the heterogeneous wave equation, cf.~\eqref{eq:hyperb}.
If we rather consider the initial-value problem
\begin{equation*}
\left\{\begin{array}{ll}
(\partial_t^2-\nabla\cdot\Aa(\tfrac\cdot\e)\nabla)z_\e=f,&\text{in $\R^+\times\R^d$},\\
z_\e|_{t=0}=u^\circ,&\text{in $\R^d$},\\
\partial_tz_\e|_{t=0}=v^\circ,&\text{in $\R^d$},
\end{array}\right.
\end{equation*}
with initial data $u^\circ,v^\circ\in\Ld^2(\R^d)$,
then, because of ill-preparedness, an $O(\e)$ contribution with almost-periodic time oscillations with $O(\e^{-1})$ frequency is expected to appear and to maintain forever: this is formally described in Section~\ref{sec:ill-posed} below and shows that a two-scale description cannot hold beyond accuracy $O(\e)$.
This issue is naturally by-passed by rather considering the time-averaged solution
\[z_{\e,\theta}^t(x)\,:=\,\int_0^\infty\theta(t-s)\,z_\e^s(x)\,ds,\qquad\text{for some $\theta\in C^\infty_c(\R)$.}\]
Indeed, setting $t_0:=\inf(\supp\theta)$, an integration by parts ensures that $z_{\e,\theta}$ is the ancient solution of
\begin{gather}\label{eq:time-average-form}
\left\{\begin{array}{ll}
(\partial_t^2-\nabla\cdot\Aa(\tfrac\cdot\e)\nabla)z_{\e,\theta}=\theta'(t)u^\circ+\theta(t) v^\circ+f_\theta,&\text{in $\R\times\R^d$},\\
z_{\e,\theta}^t=0,&\text{for $t<t_0$},
\end{array}\right.
\end{gather}
{in terms of the time-averaged impulse $f_\theta^t(x):=\int_0^\infty\theta(t-s)\,f^s(x)\,ds$.}
Hence, an effective approximation for the time-averaged solution $z_{\e,\theta}$ is obtained as a direct consequence of Theorems~\ref{th:main-per} and~\ref{th:main2} for ancient solutions.
{Another way to solve this issue is to consider oscillating initial data $(u^\circ_\e,v^\circ_\e)$ in form 
of a spectral two-scale expansion.}

\medskip
\item {\it Heterogeneous mass density:}
As in \cite{ALR}, we may replace $\partial_t^2$ by $\rho(\frac x\e)\partial_t^2$ in the wave equation~\eqref{eq:hyperb}. Provided that the weight function~$\rho$ satisfies the uniform non-degeneracy condition $\frac1{C_0}\le\rho(x)\le C_0$ for some constant~$C_0>0$, it does not change much in the analysis once the definitions of correctors are suitably adapted. The necessary changes are transparent and we skip the detail for shortness\qedhere
\end{enumerate}
\end{rems}

\subsection{Well-posedness of homogenized equations}\label{sec:discuss}

We start by discussing the formal homogenized equation~\eqref{eq:goal2} obtained with the spectral approach, where we recall that $\bar \bb^{k}=0$ for all $k$ even.
The obstacle to the well-posedness of this equation is the lack of ellipticity of the operator 
\begin{equation}\label{eq:hom-operator}
-\nabla\cdot\Big(\bar\Aa+\sum_{k=2}^\ell\bar\bb^k\odot(\e\nabla)^{k-1}\Big)\nabla,
\end{equation}
because of dispersive corrections.
Indeed, the next-order homogenized coefficient $\bar \bb^3$ can be proved to be non-negative, cf.~\cite{SaSy-91}, so that  equation~\eqref{eq:goal2} is ill-posed in general.
This is not new to homogenization, and a similar difficulty occurs in the elliptic setting when studying higher-order two-scale expansions, see e.g.~\cite{BLP-78,DO1}. In this case, one typically uses an inductive method, which, for the wave equation, would read as follows:
for $\ell\ge1$, set $\bar w^\ell_\e:=\sum_{k=1}^\ell\e^{k-1}\tilde w^k$, where $\tilde w^1$ is the solution of
\begin{equation*}
\left\{\begin{array}{ll}
(\partial_t^2-\nabla\cdot\bar\Aa \nabla)\tilde w^1=f,&\text{in $\R\times\R^d$},\\
\tilde w^1=f=0,&\text{for $t<0$},
\end{array}\right.
\end{equation*}
and where for $2\le k\le\ell$ we inductively define $\tilde w^k$ as the unique solution of
\begin{equation}\label{eq:induct-ell}
\left\{\begin{array}{ll}
(\partial_t^2-\nabla\cdot\bar\Aa \nabla)\tilde w^k=\sum_{j=2}^k \nabla\cdot(\bar\bb^j\odot\nabla^{j-1})\nabla\tilde w^{k+1-j},&\text{in $\R\times\R^d$}\\
\tilde w^{k}=0,&\text{for $t<0$}.\qedhere
\end{array}\right.
\end{equation}
It is easily checked that this $\bar w_\e^\ell$ indeed satisfies~\eqref{eq:goal2}. However, as originally observed in~\cite{ALR} (in the similar setting of~\eqref{eq:form-hom}), this notion of solution displays an immoderate growth {in time}, which destroys any hope of using it for an accurate description of~\eqref{eq:hyperb} on long timescales. More precisely, the energy norm $\|\nabla\bar w_\e^{\ell;t}\|_{\Ld^2(\R^d)}$ is expected to behave like~$O(\langle\e t\rangle^{\ell-1})$, which would make the approximation $u_\e\sim S_\e^\ell[\bar w_\e^\ell,f]$ trivially false on long timescales $t\gg\e^{-1}$.
This time growth {(also called secular growth)} appears as a snowball effect as corrector terms in the above hierarchy of equations for $\{\tilde w^k\}_{k\ge1}$ have the preceding profiles as sources.

Instead of this naïve inductive method, one must look for another way to rearrange equation~\eqref{eq:goal2}.
In fact, as $\bar\Aa\ge\lambda\Id$, we note that the Fourier symbol of the operator~\eqref{eq:hom-operator} remains positive in a fixed Fourier support for~$\e$ small enough. Therefore, if the spatial Fourier transform of the impulse $f$ is compactly supported (uniformly in time), then for~$\e$ small enough the Duhamel formula allows us to define a unique solution that keeps the same compact support in Fourier space at all times.
Without this additional assumption on~$f$, the operator~\eqref{eq:hom-operator} needs to be modified at high frequencies $O(\frac1\e)$ to ensure ellipticity, and there are different ways to proceed. { In the following we discuss the three different regularizing terms that we consider in Theorem \ref{th:main-per}. The resulting solutions satisfy \eqref{eq:goal2} up
to an error of the order $O(\e^{\ell})$.} We briefly describe high-frequency filtering, high{er}-order regularization, and the so-called Boussinesq trick: these three approaches happen to be essentially equivalent up to higher-order $O(\e^\ell)$ errors, and we refer to Section~\ref{sec:homog-eqn/spec} for the details.
\begin{enumerate}[\hspace{-0.15cm}(I)]
\item\emph{High-frequency filtering:}\\
In~\cite{ALR}, the authors proposed to use a low-pass truncation, which amounts to filtering out high frequencies of the impulse: equation~\eqref{eq:goal2} is then replaced by 
\begin{equation*}
\partial_t^2\bar u_\e^{\text{(I)},\ell}-\nabla\cdot\Big(\bar\Aa+\sum_{k=2}^\ell\bar\bb^k\odot(\e\nabla)^{k-1}\Big)\nabla\bar u_\e^{\text{(I)},\ell}= \chi(\e^\alpha\nabla)f,
\end{equation*}
for some $\alpha\in(0,1)$ and some $\chi\in C^\infty_c(\R^d)$ with $\chi|_{\frac12B}=1$ and $\chi|_{\R^d\setminus B}=0$.
\smallskip\item\emph{Higher-order regularization:}\\
The alternative method used in~\cite{BG} amounts to regularizing the operator~\eqref{eq:hom-operator} by adding a high{er}-order positive operator $\kappa_\ell (\e|\nabla|)^\ell(-\triangle)$, where the factor $\kappa_\ell>0$ is chosen for instance as the smallest value that ensures the following uniform positivity,
\[\qquad\xi \cdot\Big(\bar\Aa+\sum_{k=2}^\ell \bar\bb^{k}\odot(i \xi)^{\otimes(k-1)}+\kappa_\ell |\xi|^{\ell}\Big)\xi \,\ge\, \tfrac12 \lambda |\xi|^2,\qquad\text{for all $\xi \in \R^d$.}\]

\smallskip\item\emph{Boussinesq trick:}\\
This last method proceeds by rearranging the ill-posed equation~\eqref{eq:goal2} and is inspired by the standard perturbative procedure to rearrange the ill-posed Boussinesq equation in the theory of water waves, see e.g.~\cite{Christov-Maugin-Velarde}.
This so-called Boussinesq trick was first adapted to the present setting by Lamacz~\cite{Lamacz-11} in one space dimension for $\ell=3$. It was extended in~\cite{DLS-14} to higher dimension for $\ell=3$, and further extended to all orders $\ell\ge3$ by Abdulle and Pouchon~\cite{Pouch-19}.
It is somehow of the same spirit as the higher-order regularization above, but with the additional twist that it further uses the wave equation itself.
This approach is slightly more intrinsic than the previous two ones, but it also has the disadvantage of involving derivatives of the impulse. Let us illustrate the main idea for $\ell=3$. We first choose $\kappa_3>0$ as the smallest value such that 
\[\xi\cdot(\bar \bb^3\odot  (i\xi)^{\otimes 2})\xi +\kappa_{3}|\xi|^2(\xi\cdot\bar \Aa\xi)\, \ge \, 0,\qquad\text{for all $\xi \in \R^d$.}\]
We then decompose the operator~\eqref{eq:hom-operator} as 
\begin{multline*}
\qquad-\nabla\cdot\big(\bar\Aa+\bar\bb^3\odot(\e\nabla)^{2}\big) \nabla\\
\,=\, -\nabla\cdot\Big(\bar\Aa(1-\kappa_3\e^2\triangle)+\bar\bb^3\odot(\e\nabla)^{2}\Big) \nabla -\kappa_{3} \e^2\triangle(\nabla \cdot \bar \Aa \nabla),
\end{multline*}
and we use that at leading order the equation~\eqref{eq:goal2} yields
\[\qquad\nabla \cdot \bar \Aa \nabla \bar u_\e^3\,=\,\partial_t^2 \bar u_\e^3-f+O(\e^2),\]
to the effect that one may reformulate \eqref{eq:goal2}  as
$$
\qquad~\partial_t^2 (1-\e^2 \kappa_3 \triangle) \bar u^{\text{(III)},3}_\e\!-\!\nabla\!\cdot\!\Big(\bar\Aa(1-\kappa_3\e^2\triangle)+\bar\bb^3\odot(\e\nabla)^{2}\Big) \nabla \bar u^{\text{(III)},3}_\e= (1-\e^2 \kappa_3 \triangle)f,
$$
up to an error of order $O(\e^4)$. By our choice of $\kappa_3$, this equation is well-posed.
This method extends to arbitrary order and we refer to Section~\ref{sec:homog-eqn/spec} for the details.
\end{enumerate}

Next, we turn to the well-posedness of the corresponding formal homogenized equation~\eqref{eq:form-hom} obtained for the geometric two-scale expansion.
While for equation~\eqref{eq:goal2} the difficulty only came from the lack of ellipticity of the spatial differential operator~\eqref{eq:hom-operator},
the existence theory for equation~\eqref{eq:form-hom} is much more delicate as this equation involves higher-order mixed space-time derivatives. Just as for equation~\eqref{eq:goal2}, the inductive method~\eqref{eq:induct-ell} used in the elliptic setting leads to secular growth of the approximate solution and is of no use in the present situation.
Before being able to use high-frequency filtering, high{er}-order regularization, or the Boussinesq trick, we need to get rid of higher-order mixed space-time derivatives in~\eqref{eq:form-hom}, which can be done by iteratively using the equation itself (quite in the spirit of the above presentation of the Boussinesq trick for~$\ell=3$). This is called the criminal approximation in~\cite{ALR}. The thorough revisiting of this idea is the object of Section~\ref{sec:homog-eqn/hyp}: more precisely, if $\bar v_\e^\ell$ solves~\eqref{eq:form-hom}, then it is shown to satisfy an equation of the form
\[\partial_t^2\bar v_\e^\ell-\nabla\cdot\Big(\bar\Aa+\sum_{k=2}^\ell\bar\bb^k\odot(\e\nabla)^{k-1}\Big)\nabla\bar v_\e^\ell
\,=\, f+\e^2\nabla\cdot\Big(\sum_{n=1}^{\ell-2}\bar\cc^n\odot(\e D)^{n-1}\Big)\nabla f+O(\e^\ell),\]
where $\{\bar \bb^n\}_n$ coincides with the spectral homogenized coefficients and where $\{\bar \cc^n\}_n$ is some family of nonlinear combinations of the hyperbolic coefficients $\{\bar \Aa^{n,m}\}_{n,m}$; see Lemma~\ref{lem:reform} for a precise statement.
Now the differential operator in the left-hand side of this equation is the same as in~\eqref{eq:goal2}: it displays a lack of ellipticity, but the same approach to well-posedness can be repeated, using either high-frequency filtering, high{{er}-order regularization, or the Boussinesq trick. Note that the right-hand side in the above reformulation of the homogenized equation~\eqref{eq:form-hom} differs from the homogenized equation~\eqref{eq:goal2} obtained with the spectral approach, but there is no contradiction as the spectral and hyperbolic two-scale expansions also differ: this demonstrates the actual complexity of the link between spectral and hyperbolic correctors; see Section~\ref{sec:fromGeom2Spec}.

\subsection{Spectral approach and two-scale expansion}\label{sec:Bloch}
This section constitutes the main insight of this contribution: the form of the spectral two-scale ansatz~\eqref{eq:goal1},
\begin{multline}\label{eq:spectral-Ansatz}
u_\e\,\sim\,S^\infty_\e[\bar u_\e,f]\,:=\,\sum_{n\ge0}\e^n\psi^n(\tfrac\cdot\e)\odot\gamma(\e\nabla)\nabla^n\bar u_\e\\
+\e^3\sum_{n,m\ge0}(-1)^m\e^{n+2m}\zeta^{n,m}(\tfrac\cdot\e)\odot\gamma(\e\nabla)\nabla^{n+1}\partial_t^{2m}f.
\end{multline}
For that purpose, we focus on the periodic setting and first proceed to a fine Floquet--Bloch analysis of the solution $u_\e$ of the heterogeneous wave equation~\eqref{eq:hyperb}.
Starting point is an application of the Floquet transform, which is known to transform the heterogeneous wave operator~\mbox{$-\nabla\cdot\Aa(\tfrac\cdot\e)\nabla$} on $\Ld^2(\R^d)$ into a family of fibered wave operators on the periodic Bloch space $\Ld^2(Q)$.

{When embedding the periodic cell into the physical space $\R^d$, there is an indeterminacy related to the choice of the origin ($q \in Q$), which we consider as an additional variable. Averaging over this variable in $Q$ allows us to place ourselves in the setting of continuum stationarity -- thus unifying the notation with the random setting. (All the upcoming results actually hold for $q$ fixed in the periodic setting.)
Hence, as in~\cite{BG,DGS,DS1}, we enrich} the structure by considering shifts of the periodic coefficient field and by augmenting the physical space $\R^d$ to include such shifts: we define $\tilde u_\e\in\Ld^\infty_\loc(\R^+;\Ld^2(\R^d\times Q))$ such that, for all $q\in Q$, $\tilde u_\e(\cdot,q)$ is the ancient solution of the following shifted wave equation,
\begin{equation}\label{eq:hyperb-augm}
\left\{\begin{array}{ll}
\big(\partial_t^2-\nabla\cdot\Aa(\tfrac\cdot\e+q)\nabla\big)\tilde u_\e(\cdot,q)=f,&\text{in $\R\times\R^d$},\\
\tilde u_\e(\cdot,q)=f=0,&\text{for $t<0$}.
\end{array}\right.
\end{equation}
In this augmented setting, the $\e$-Floquet transform~\cite{DGS,DS1} of an element $\tilde w\in\Ld^2(\R^d\times Q)$ is formally defined as
\[(\Vc_\xi^\e\tilde w)(q):=\int_{\R^d}e^{-i\xi\cdot y}\,\tilde w(y,q-\tfrac y\e)\,dy,\]
which is $Q$-periodic with respect to $q$. The Fourier inversion formula takes on the following guise, cf.~\cite[Lemma 2.2]{DGS},
\begin{equation}\label{eq:invert-Floquet}
\tilde w(x,q)=\int_{\R^d}e^{i\xi\cdot x}\,(\Vc_\xi^\e \tilde w)(\tfrac x\e+q)\,d^* \xi.
\end{equation}
This leads to a direct integral decomposition\footnote{Or, more precisely, a family of direct integral decompositions parameterized by $\e$, which changes the way the unit cell $Q$ is embedded in $\R^d$.}
\[\Ld^2(\R^d\times Q)=\int_{\R^d}^\oplus\Ld^2( Q)\,\mathfrak e_\xi\,d^*\xi\]
via Fourier modes $\mathfrak e_\xi(x):=e^{i\xi\cdot x}$.
Under this decomposition, the above wave equation~\eqref{eq:hyperb-augm} is equivalent to the following family of wave equations on the unit cell $Q${:} for all~$\xi\in\R^d$,
\begin{equation*}
\left\{\begin{array}{ll}
\big(\partial_t^2-\tfrac1{\e^2}(\nabla+i\e\xi)\cdot\Aa(\nabla+i\e\xi)\big)\Vc_\xi^\e\tilde u_\e=\hat f(\xi),&\text{in $\R\times Q$},\\
\Vc_\xi^\e\tilde u_\e=\hat f(\xi)=0,&\text{for $t<0$},
\end{array}\right.
\end{equation*}
where we recall that $\hat f$ stands for the spatial Fourier transform of $f$.
Solving this equation by means of Duhamel's formula and using~\eqref{eq:invert-Floquet} to invert the Floquet transform, we get
\begin{equation}\label{eq:dec-ueps-00}
\tilde u_\e^t(x,q)=\int_0^t\int_{\R^d}e^{i\xi\cdot x} \bigg(\frac{\sin\big(\tfrac1\e(t-s)(\Lc_{\e \xi})^{1/2}\big)}{\tfrac1\e(\Lc_{\e \xi})^{1/2}}1\bigg)(\tfrac x\e+q)\,\hat f^s(\xi)\,d^*\xi\,ds,
\end{equation}
with the short-hand notation 
\begin{equation}\label{eq:defLxi}
	\Lc_{\xi}:=-(\nabla+i\xi)\cdot\Aa(\nabla+i\xi).
\end{equation}
In other words, the solution can be decomposed as a superposition of fibered evolutions on $\Ld^2(Q)$, and, since the force~$f$ is not oscillating, only the fibered spectral measures associated with the constant function~$1$  matter.
It remains to evaluate the space-time oscillating factor in  formula~\eqref{eq:dec-ueps-00} and to extract an effective behavior as $\e\ll1$.

\begin{prop}\label{prop:Bloch}
Let $\Aa$ be $Q$-periodic, let the impulse $f\in C^\infty(\R;H^\infty(\R^d))$ satisfy $f=0$ for $t<0$, and assume for simplicity that the spatial Fourier transform $\hat f$ is compactly supported uniformly in time. For all $\xi$, the self-adjoint operator $\Lc_\xi$ on $\Ld^2(Q)$ {defined in \eqref{eq:defLxi}} has discrete spectrum and we denote its smallest eigenvalue by~$\lambda_\xi\ge0$. For $|\xi|\ll1$, this eigenvalue is simple and we denote by $w_\xi$ a corresponding normalized eigenfunction.
We then set
\begin{equation}\label{eq:defin-pi-ibot}
\pi_\xi1\,:=\,\expec{\overline{w_\xi}}w_\xi,\qquad\text{and}\qquad\pi_\xi^\bot1\,:=\,1-\pi_\xi1,
\end{equation}
we note that the map $\R^d\to\R^+\times\Ld^2( Q):\xi\mapsto(\lambda_\xi,\pi_\xi1)$ is analytic for $|\xi|\ll1$, and that for $|\xi|\lesssim1$ and $\e\ll1$ {it holds that}
\begin{equation}\label{eq:prop-lam-pi-eps}
|\tfrac1{\e^2}\lambda_{\e\xi}|\,\lesssim\,|\xi|^2,\qquad\text{and}\qquad\|\pi_{\e\xi}1-1\|_{\Ld^2( Q)}\,\lesssim\,\e|\xi|.
\end{equation}
With this notation, for $\e\ll_f1$ (depending on the Fourier support of $f$), the above formula~\eqref{eq:dec-ueps-00} for $\tilde u_\e$ can be expanded as follows: for all $n\ge1$,
\begin{multline}\label{eq:full-decomp-per-u}
\tilde u_\e^t(x,q)\,=\,\int_{\R^d}e^{i\xi\cdot x}\,(\pi_{\e\xi}1)(\tfrac x\e+q)\bigg(\int_0^t\frac{\sin\big((t-s)(\tfrac1{\e^2}\lambda_{\e \xi})^{1/2}\big)}{(\tfrac1{\e^2}\lambda_{\e \xi})^{1/2}}\,\hat f^s(\xi)\,ds\bigg)\,d^*\xi\\
+\e^3\sum_{m=0}^{n-1}(-1)^m\e^{2m}\int_{\R^d}e^{i\xi\cdot x}\,\Psi_{\xi,\e}^m(\tfrac x\e+q)\,\partial_t^{2m}\hat f^t(\xi)\,d^*\xi\\
+O(\e^{2(n+1)})\,\|\langle\partial_t\rangle^{2n+1}\hat f\|_{\Ld^1\cap\Ld^\infty(\R;\Ld^1(\R^d))},
\end{multline}
in terms of
\begin{equation}\label{eq:defin-Psi-0}
\Psi_{\xi,\e}^m\,:=\,(\Lc_{\e\xi})^{-m-1}\tfrac1\e\pi_{\e\xi}^\bot1,
\end{equation}
which for $|\xi|\lesssim1$ is analytic with respect to $\e\ll1$ and satisfies
$\|\Psi_{\xi,\e}^m\|_{\Ld^2(Q)}\lesssim_m|\xi|$.
\end{prop}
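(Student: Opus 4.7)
The plan is to combine Kato analytic perturbation theory for the fibered family $\{\Lc_\xi\}_\xi$ with iterated integrations by parts in time applied to the Duhamel formula~\eqref{eq:dec-ueps-00}. The first step addresses the spectral structure. Each $\Lc_\xi$ is self-adjoint on $\Ld^2(Q)$ with compact resolvent, hence discrete spectrum. At $\xi=0$, uniform ellipticity and the Poincar\'e inequality on $Q$ ensure that $0$ is a simple eigenvalue of $\Lc_0$ with normalized eigenfunction $w_0\equiv1$, separated by a uniform gap from the rest of the spectrum. Since $\xi\mapsto\Lc_\xi$ is a type-(A) analytic family, Kato theory yields analyticity of the lowest eigenpair $(\lambda_\xi,w_\xi)$ for $|\xi|\ll1$, and persistence of the spectral gap on $\pi_\xi^\bot\Ld^2(Q)$. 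The identity $\Lc_{-\xi}=\overline{\Lc_\xi}$ (unitarily conjugate via complex conjugation) forces $\lambda_\xi$ to be even in $\xi$, so that $\lambda_\xi=O(|\xi|^2)$ and hence $\lambda_{\e\xi}/\e^2\lesssim|\xi|^2$; the bound $\|\pi_{\e\xi}1-1\|_{\Ld^2(Q)}\lesssim\e|\xi|$ then follows from analyticity together with $\pi_0 1=1$.

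The second step is the decomposition itself. I would insert $1=\pi_{\e\xi}1+\pi_{\e\xi}^\bot1$ in~\eqref{eq:dec-ueps-00}: on the one-dimensional eigenspace $\C w_{\e\xi}$ the operator $\Lc_{\e\xi}$ acts by multiplication by $\lambda_{\e\xi}$, which directly produces the first line of~\eqref{eq:full-decomp-per-u}. For the $\pi_{\e\xi}^\bot1$-piece, I would set $A:=\tfrac1\e\Lc_{\e\xi}^{1/2}$ and repeatedly use $\tfrac{d}{ds}\cos((t-s)A)=A\sin((t-s)A)$ and $\tfrac{d}{ds}\sin((t-s)A)=-A\cos((t-s)A)$ to integrate by parts in pairs. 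By the ancient-solution assumption we have $\partial_s^{m}\hat f^s|_{s=0}=0$ for all $m\ge0$, so the boundary terms at $s=0$ drop, and $n$ such pairs yield
\[
\int_0^t\frac{\sin((t-s)A)}{A}\,\hat f^s\,ds\,=\,\sum_{m=0}^{n-1}(-1)^m A^{-2m-2}\partial_t^{2m}\hat f^t+(-1)^n\int_0^t\frac{\sin((t-s)A)}{A^{2n+1}}\partial_s^{2n}\hat f^s\,ds.
\]
Substituting $A^{-2m-2}=\e^{2m+2}\Lc_{\e\xi}^{-m-1}$ and recognizing $\Lc_{\e\xi}^{-m-1}\pi_{\e\xi}^\bot1=\e\Psi_{\xi,\e}^m$ reproduces the second line of~\eqref{eq:full-decomp-per-u} with the correct prefactor $\e^{2m+3}=\e^3\cdot\e^{2m}$.

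The third step collects the quantitative bounds and closes the remainder. The uniform spectral gap gives, on $\pi_{\e\xi}^\bot\Ld^2(Q)$, an operator norm bound $\|\Lc_{\e\xi}^{-k}\|\lesssim_k 1$; functional calculus yields $\|\sin((t-s)A)\|_{\Ld^2(Q)\to\Ld^2(Q)}\le1$; and Step~1 provides $\|\pi_{\e\xi}^\bot1\|_{\Ld^2(Q)}\lesssim\e|\xi|$. Combined, these imply simultaneously $\|\Psi_{\xi,\e}^m\|_{\Ld^2(Q)}\lesssim_m|\xi|$ and a pointwise bound of order $\e^{2n+2}|\xi|\,|\partial_s^{2n}\hat f^s(\xi)|$ on the remainder integrand; integrating against $e^{i\xi\cdot x}$ over the fixed compact Fourier support of $\hat f$ (valid as soon as $\e\ll_f1$) produces the advertised $O(\e^{2(n+1)})$ control. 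Analyticity of $\Psi_{\xi,\e}^m$ in $\e$ follows from analyticity of $\Lc_{\e\xi}^{-m-1}\pi_{\e\xi}^\bot$ in $\e\xi$ combined with the factorization $\tfrac1\e(1-\pi_{\e\xi}1)=-\xi\cdot g(\e\xi)$ for some analytic $g$, itself possible because $\pi_0 1=1$. The main technical obstacle I anticipate is the bookkeeping in Step~2: ensuring that every negative power of $A$ only acts on elements of $\pi_{\e\xi}^\bot\Ld^2(Q)$ (so that the uniform spectral gap may legitimately be used) and that all boundary contributions at $s=0$ drop thanks to well-preparedness of the impulse. Everything else reduces to routine applications of analytic perturbation theory and the spectral theorem.
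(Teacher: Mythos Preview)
Your proposal is correct and follows essentially the same route as the paper: Kato perturbation theory for the fibered family $\{\Lc_\xi\}_\xi$ to obtain the simple analytic lowest eigenbranch with a uniform spectral gap, followed by the decomposition $1=\pi_{\e\xi}1+\pi_{\e\xi}^\bot1$ in the Duhamel formula~\eqref{eq:dec-ueps-00} and iterated integration by parts in time on the orthogonal piece. The only cosmetic differences are that the paper stops the integration by parts one half-step later (obtaining a $\cos$ remainder with $\partial_s^{2n+1}\hat f$ rather than your $\sin$ remainder with $\partial_s^{2n}\hat f$), and that it does not spell out your evenness argument $\Lc_{-\xi}=\overline{\Lc_\xi}$ for $\lambda_\xi=O(|\xi|^2)$, relying instead implicitly on $\lambda_0=0$ being a minimum of the analytic branch.
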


We emphasize the structure of the above expansion~\eqref{eq:full-decomp-per-u}.
Since in~\eqref{eq:dec-ueps-00} only the fibered spectral measures associated with the constant function~$w_0\equiv1$ {(which is the ground state of $\Lc_0$)}  matter,   the main contribution in~\eqref{eq:full-decomp-per-u} is naturally given by the perturbed ground state $w_{\e\xi}\propto\pi_{\e\xi}1$. However, as the impulse~$f$ is not oscillating, hence is not adapted to oscillations of the ground state $w_{\e\xi}$, higher modes also create another non-vanishing contribution. In other words:
\begin{enumerate}[---]
\item The first term in~\eqref{eq:full-decomp-per-u} corresponds to the contribution of the ground state of the fibered operators~$\{\Lc_\xi\}_\xi$ and the time dependence is expressed by some effective evolution determined by the fibered ground eigenvalues $\{\lambda_\xi\}_\xi$.
\smallskip\item The second term in~\eqref{eq:full-decomp-per-u} is only of order $O(\e^3)$ and is induced by higher modes. More precisely, the ill-preparedness of the impulse $f$ creates a non-trivial oscillatory contribution in Duhamel's formula due to higher modes, which amounts after time integration to a contribution that is local with respect to $f$.
In particular, note that this term vanishes outside the support of $f$.
\end{enumerate}
For comparison, the Bloch wave approach in~\cite{Lamacz-11,DLS-14,DLS-15,BG} rather  focused on the first term in~\eqref{eq:full-decomp-per-u}, thus neglecting the $O(\e^3)$ contribution of higher modes, and further replaced the oscillating factor $\pi_{\e\xi}1$ by a simpler (not normalized) proxy that is easier to characterize but that yields an additional $O(\e)$ error.

\begin{proof}[Proof of Proposition~\ref{prop:Bloch}]
To evaluate the space-time oscillating factor in~\eqref{eq:dec-ueps-00}, we must investigate the spectrum of~$\Lc_\xi$ in the perturbative regime $|\xi|\ll1$. As this self-adjoint operator has compact resolvent by Rellich's theorem, it has discrete spectrum.
We denote by $\lambda_\xi$ its smallest eigenvalue, which is nonnegative as $\Lc_\xi$ is.
Note that for~\mbox{$\xi=0$} the smallest eigenvalue of $\Lc_0$ is $\lambda_0=0$ and is simple (with constant eigenfunction).
Since the perturbation $\Lc_\xi-\Lc_0$ is $\Lc_0$-bounded with relative norm $\lesssim1+|\xi|^2$, standard perturbation theory~\cite{Kato} together with the discreteness of the spectrum of $\Lc_0$ ensures that the smallest eigenvalue~$\lambda_\xi$ remains simple for $|\xi|\ll1$ small enough. Moreover, the branch of eigenvalues $\xi\mapsto\lambda_\xi$ is analytic for $|\xi|\ll1$, and there is a corresponding analytic branch of eigenfunctions.
Recall the definition of the corresponding projectors $\pi_\xi,\pi_\xi^\bot$ in the statement, and note that $\pi_01=1$, {so that}~\eqref{eq:prop-lam-pi-eps} follows.
Now expanding the constant function $1$ with respect to those projectors, identity~\eqref{eq:dec-ueps-00} turns into
\begin{multline}\label{eq:dec-ueps-0}
\tilde u_\e^t(x,q)\,=\,\int_{\R^d}e^{i\xi\cdot x}(\pi_{\e\xi}1)(\tfrac x\e+q)\bigg(\int_0^t \frac{\sin\big((t-s)(\tfrac1{\e^2}\lambda_{\e \xi})^{1/2}\big)}{(\tfrac1{\e^2}\lambda_{\e \xi})^{1/2}}\,\hat f^s(\xi)\,ds\bigg)\,d^*\xi\\
+\int_0^t\int_{\R^d}e^{i\xi\cdot x} \bigg(\frac{\sin\big(\tfrac1\e(t-s)(\Lc_{\e \xi})^{1/2}\big)}{\tfrac1\e(\Lc_{\e \xi})^{1/2}}\pi_{\e\xi}^\bot1\bigg)(\tfrac x\e+q)\,\hat f^s(\xi)\,d^*\xi\,ds.
\end{multline}
The first right-hand side term is already of the desired form, cf.~\eqref{eq:full-decomp-per-u}.
We turn to the second term, which captures the contribution of higher modes.
Due to the discreteness of the spectrum, for $|\xi|\ll1$, the operator $\Lc_\xi$ has a spectral gap $\Lc_\xi|_{\Im\pi_{\xi}^\bot}\,\gtrsim\,1$. Combined with~\eqref{eq:prop-lam-pi-eps}, this yields the following bound for~\eqref{eq:defin-Psi-0},
\[\|\Psi_{\xi,\e}^m\|_{\Ld^2( Q)}\lesssim_m|\xi|.\]
For $n\ge1$, noting that iterated integration by parts in the time integral yields for all $\lambda>0$,
\begin{multline*}
\int_0^t\frac{\sin\big(\tfrac1\e(t-s)\lambda^{1/2}\big)}{\tfrac1\e \lambda^{1/2}}\,\hat f^s(\xi)\,ds\,=\,\sum_{m=0}^{n} \big(\tfrac{\e^2}{\lambda}\big)^{m+1}(-\partial_t^2)^m\hat f^t(\xi)\\
-\big(\tfrac{\e^2}{\lambda}\big)^{n+1}\int_0^t\cos\big(\tfrac1\e(t-s)\lambda^{1/2}\big)\partial_s(-\partial_s^2)^{n}\hat f^s(\xi)\,ds,
\end{multline*}
where we used the vanishing assumption for the impulse at initial time $s=0$,
it then follows from functional calculus that
\begin{multline*}
\int_0^t\int_{\R^d}e^{i\xi\cdot x} \bigg(\frac{\sin\big(\tfrac1\e(t-s)(\Lc_{\e \xi})^{1/2}\big)}{\tfrac1\e(\Lc_{\e \xi})^{1/2}}\pi_{\e\xi}^\bot1\bigg)(\tfrac x\e+q)\,\hat f^s(\xi)\,d^*\xi\,ds\\
\,=\,\e^3\sum_{{m}=0}^{n-1}(-1)^{m}\e^{2{m}}\int_{\R^d}e^{i\xi\cdot x}\,\Psi_{\xi,\e}^{{m}}(\tfrac x\e+q)\,\partial_t^{2{m}}\hat f^t(\xi)\,d^*\xi\\
+O(\e^{2(n+1)})\,\|\langle\partial_t\rangle^{2n+1}\hat f\|_{\Ld^1\cap\Ld^\infty(\R;\Ld^1(\R^d))}.
\end{multline*}
Inserting this into~\eqref{eq:dec-ueps-0} yields the conclusion~\eqref{eq:full-decomp-per-u}.
\end{proof}

We turn to the applicability of this spectral computation beyond the periodic setting.
In case of a stationary ergodic random coefficient field $\Aa$, a similar Floquet decomposition~\eqref{eq:dec-ueps-00} can be justified, cf.~\cite{DGS,DS1}, but the corresponding fibered operators $\{\Lc_{\xi}\}_\xi$ are then defined on $\Ld^2(\Omega)$, where $\Omega$ is the underlying probability space, and typically have non-discrete spectrum. For instance, the spectrum of $\Lc_0=-\nabla\cdot\Aa\nabla$ on $\Ld^2(\Omega)$ is expected to be made of a simple eigenvalue at~$0$ embedded at the bottom of {an} absolutely continuous spectrum, cf.~\cite{DS1}.
In this setting, the Floquet--Bloch theory fails and the above perturbative spectral computation cannot be adapted.
As shown in~\cite{BG,DGS}, however, an `approximate spectral theory' can be developed: formal Rayleigh--Schrödinger series can be approximately constructed up to a certain accuracy, leading to approximate Bloch waves that can be used to approximately diagonalize the heterogeneous wave operator and describe the flow on long timescales.
Equivalently, we may start from a two-scale ansatz given by the formal $\e$-expansion of the spectral formula~\eqref{eq:full-decomp-per-u}, and then use PDE techniques to show that it indeed provides a good approximation of the flow to a certain accuracy. This approach is the one we use for the proof of Theorems~\ref{th:main-per} and~\ref{th:main2}: { it allows us to forget about the underlying spectral interpretation, which is only used to devise an educated guess for a two-scale ansatz.} 

Finally, let us describe the $\e$-expansion of the spectral formula~\eqref{eq:full-decomp-per-u}, showing that it takes {the} form of the spectral two-scale ansatz~\eqref{eq:spectral-Ansatz}, and let us derive the relevant hierarchy of PDEs for its coefficients. For that purpose, following~\cite{BG}, we first consider the (not normalized) ground state $\psi_{\e\xi}$ of $\Lc_{\e\xi}$ satisfying $\expec{\psi_{\e\xi}}=1$.
Expanding {formally}
\begin{equation}\label{eq:def-psi-expand}
\psi_{\e\xi} \sim \sum_{n\ge0}\e^{n}\check\psi^n_\xi, \qquad \lambda_{\e\xi}\sim\sum_{n\ge1} \e^{n}\check\lambda^n_\xi,
\end{equation}
and separating powers of $\e$ in the eigenvalue relation $\Lc_{\e\xi}\psi_{\e\xi}=\lambda_{\e\xi}\psi_{\e\xi}$, we  find that the maps $\check \psi^n_\xi:Q \to \C$ and coefficients $\check\lambda^n_\xi \in \C$ are defined inductively by $\check\psi^0_\xi=1$, $\check\lambda^0_\xi=0$, and for all $n\ge1$,
\begin{gather}\label{e.spec-intro1}
-\nabla\cdot\Aa\nabla\check\psi^{n}_\xi=\nabla\cdot(\Aa i\xi \check\psi^{n-1}_\xi)+i\xi\cdot \Aa(\nabla\check\psi^{n-1}_\xi+ i\xi\check\psi^{n-2}_\xi)+\sum_{k=1}^n\check \lambda^k_\xi\check\psi^{n-k}_\xi,\\
\expec{\check\psi^n_\xi}=0,\qquad \check \lambda^n_\xi=-\expec{i\xi\cdot(\Aa\nabla\check\psi^{n-1}_\xi+\Aa i\xi\check\psi^{n-2}_\xi)}.\nonumber
\end{gather}
(Recall that by convention we implicitly set $\check\psi_\xi^{n}\equiv0$ for $n<0$.)
This hierarchy of equations uniquely defines $\{\check\psi_\xi^n,\check\lambda_\xi^n\}_n$ by the Fredholm alternative since by induction the periodic average of the right-hand side of~\eqref{e.spec-intro1}  vanishes; cf.~Section~\ref{sec:def-corr-spec}. Note that we find $\check\lambda^1_\xi=0$ in agreement with~\eqref{eq:prop-lam-pi-eps}.
Next, we normalize $\psi_{\e\xi}$ to get a normalized ground state $w_{\e\xi}$ and we define the corresponding projections $\pi_{\e\xi}$, cf.~\eqref{eq:defin-pi-ibot},
\begin{equation}\label{eq:link-w-psi}
w_{\e \xi}=\frac{\psi_{\e\xi}}{\expec{|\psi_{\e\xi}|^2}^{1/2}}, \qquad \expec{\overline{w_{\e \xi}}}=\frac1{\expec{|\psi_{\e\xi}|^2}^{1/2}},
\qquad\pi_{\e\xi}1=\frac{\psi_{\e\xi}}{\expec{|\psi_{\e\xi}|^2}},
\end{equation}
and, in terms of~\eqref{eq:def-psi-expand},
\begin{eqnarray}
\tfrac1\e\pi^\bot_{\e\xi}1&=&\frac{1}{\expec{|\psi_{\e\xi}|^2}}\tfrac1\e\big(-\psi_{\e\xi}+\expec{|\psi_{\e\xi}|^2}\big)\nonumber\\
&=&\frac1{\expec{|\psi_{\e\xi}|^2}}\sum_{n\ge0}\e^n \Big(-\check\psi^{n+1}_\xi+\sum_{k=0}^{n+1}\expec{\overline{\check\psi^{n+1-k}_\xi}\check\psi^k_\xi}\Big).\label{eq:expand-pibot1}
\end{eqnarray}
We turn to the $\e$-expansion of $\{\Psi_{\xi,\e}^m\}_m$, cf.~\eqref{eq:defin-Psi-0}: we write their expansions as
\begin{equation}\label{eq:expand-Psixieps}
\Psi_{\xi,\e}^m\,\sim\,\frac1{\expec{|\psi_{\e\xi}|^2}}\sum_{n\ge0}\e^n\check\zeta_\xi^{n,m},
\end{equation}
and it remains to write PDEs to characterize the coefficients $\{\check\zeta_\xi^{n,m}\}_{n,m}$.
For $m=0$, inserting~\eqref{eq:expand-pibot1} and separating powers of $\e$ in the defining equation
\begin{equation*}
\Lc_{\e\xi} \Psi_{\xi,\e}^0=\tfrac1\e\pi_{\e\xi}^\bot1,
\end{equation*}
we find that the maps $\check\zeta_\xi^{n,0}:Q\to\C$ are defined inductively for all $n\ge0$ by
\begin{equation}\label{e.spec-intro3}
-\nabla\cdot\Aa\nabla\check\zeta^{n,0}_\xi=\nabla\cdot(\Aa i\xi\check\zeta^{n-1,0}_\xi) +i\xi\cdot\Aa\big(\nabla\check\zeta^{n-1,0}_\xi+i\xi\check\zeta^{n-2,0}_\xi\big)-\check\psi^{n+1}_\xi+\sum_{k=0}^{n+1}\expec{\overline{\check\psi^{n+1-k}_\xi}\check\psi^k_\xi},
\end{equation}
with nontrivial integration constant fixed as
\begin{equation}\label{e.spec-intro4}
\expecm{\check\zeta^{n,0}_\xi}\,:=\,-\sum_{k=1}^{n}\sum_{l=2}^{k+2}(\overline{\check\lambda^l_\xi/\check\lambda^2_\xi})\expec{\overline{\check\psi^{k+2-l}_\xi} \check\zeta^{n-k,0}_\xi}.
\end{equation}
(Recall again that by convention we implicitly set $\check\zeta_\xi^{n,0}\equiv0$ for $n<0$.)
Again, these objects are well-defined by induction and the Fredholm alternative since our choice~\eqref{e.spec-intro4} precisely ensures that the right-hand side of~\eqref{e.spec-intro3} has vanishing periodic average; cf.\@ Section~\ref{sec:def-corr-spec}.
Next, we argue iteratively for $m\ge1$:
starting from the defining equation
\[\Lc_{\e\xi}\Psi_{\xi,\e}^{m}= \Psi_{\xi,\e}^{m-1},\]
we find that the maps $\check\zeta_\xi^{n,m}:Q\to\C$ are defined inductively for all $n\ge0$ by
\begin{equation}\label{e.spec-intro4re}
-\nabla\cdot\Aa\nabla\check\zeta^{n,m}_\xi\,=\,\nabla\cdot(\Aa i\xi\check\zeta^{n-1,m}_\xi)+i\xi\cdot\Aa\big(\nabla\check\zeta^{n-1,m}_\xi+i\xi\check\zeta^{n-2,m}_\xi\big)+\check\zeta^{n,m-1}_\xi,
\end{equation}
with nontrivial integration constant fixed as
\begin{equation}\label{e.spec-intro5re}
\expecm{\check\zeta^{n,m}_\xi}\,:=\,-(\overline{1/\check\lambda^2_\xi})\sum_{k=0}^{n+2}\expec{\overline{\check\psi^{n+2-k}_\xi }\check\zeta^{k,m-1}_\xi}-{\sum_{k=2}^{n}}\sum_{l=2}^{k+1}(\overline{\check\lambda^l_\xi/\check\lambda^2_\xi})\expec{\overline{\check\psi^{k+1-l}_\xi }\check\zeta^{n-k,m}_\xi}.
\end{equation}

Putting all this together, using expansions~\eqref{eq:def-psi-expand} and~\eqref{eq:expand-Psixieps}, and extracting the polynomial $\xi$-dependence of the coefficients in the notation,
\[\check\psi_\xi^n\,=\,\psi^n\odot(i\xi)^{\otimes n},
\qquad\check\lambda_\xi^{n+1}\,=\,\xi\cdot(\bar\bb^{n}\odot(i\xi)^{\otimes(n-1)})\xi,
\qquad\check\zeta_\xi^{n,m}\,=\,\zeta^{n,m}\odot(i\xi)^{\otimes (n+1)},\]
the spectral formula~\eqref{eq:full-decomp-per-u} appears to be precisely equivalent to the spectral two-scale ansatz~\eqref{eq:spectral-Ansatz},
with Fourier multiplier $\gamma$ given by
\[\gamma(\xi)\,:=\,\expec{|\psi_{\xi}|^2}^{-1},\]
and with $\bar u_\e$ satisfying the associated formal homogenized equation, cf.~\eqref{eq:goal2},
\begin{equation*}
\partial_t^2\bar u_\e-\nabla\cdot\Big(\bar\Aa+\sum_{n\ge2}\bar\bb^n\odot(\e\nabla)^{n-1}\Big)\nabla\bar u_\e\,\sim\,f,
\end{equation*}
where dispersive corrections correspond to derivatives of the fibered ground states $\{\lambda_\xi\}_\xi$ at~$\xi=0$.
We have also derived the PDE hierarchies defining correctors $\{\psi^n,\zeta^{n,m}\}_{n,m}$, cf.~\eqref{e.spec-intro1}, \eqref{e.spec-intro3}, and~\eqref{e.spec-intro4re}.
For the proof of Theorems~\ref{th:main-per} and~\ref{th:main2}, we replace infinite series by finite sums and show by PDE techniques that these are still good approximations for the wave flow.
In the random setting, just as in the elliptic case~\cite{GNO-quant,AKM-book,Gu-17,DO1}, we can only solve a finite number of the above corrector equations, which is why a homogenized description is only obtained up to some maximal timescale and accuracy, depending both on space dimension and on mixing properties of the coefficient field.

\subsection{Geometric approach and hyperbolic two-scale expansion}\label{sec:geom-insight}
Instead of star\-ting from the above spectral analysis,
another way to describe oscillations of the solution $u_\e$ of the heterogeneous wave equation~\eqref{eq:hyperb} is to appeal more directly to two-scale expansion techniques~\cite{BLP-78} and rather postulate the following natural hyperbolic two-scale ansatz,
\begin{equation}\label{eq:2scale-hyp}
u_\e\,\sim\,H^\infty_\e[\bar v_\e]\,:=\,\sum_{n,m\ge0}\e^{n+m} \phi^{n,m}(\tfrac\cdot\e)\odot\nabla^n\partial_t^m\bar v_\e,
\end{equation}
where time and space play essentially symmetric roles and where $\bar v_\e$ should satisfy some effective (constant-coefficient) hyperbolic equation.
Inserting this ansatz into the heterogeneous wave equation~\eqref{eq:hyperb} and separating powers of~$\e$, we are led to defining hyperbolic correctors as Allaire, Lamacz, and Rauch in~\cite[Definition~2.2]{ALR}.
These correctors can be viewed as a refinement of usual elliptic correctors: for all $n\ge0$, the $n$th-order hyperbolic corrector~$\phi^{n,0}$ and homogenized tensor~$\bar\Aa^{n,0}$ defined in Section~\ref{sec:structure} indeed coincide with their elliptic counterparts~\cite{BLP-78,Gu-17,DO1}.

As in the elliptic setting, hyperbolic correctors have a natural geometric interpretation.
We focus on the periodic case to simplify the presentation.
The first corrector $\phi^{1,0}$, which is the same as in the elliptic setting, is defined to correct Euclidean coordinates $x\mapsto x_i$ into $\Aa$-harmonic ones $x\mapsto x_i+\phi_i^{1,0}(x)$: indeed, $\phi_i^{1,0}$ is the unique periodic solution of
\[-\nabla\cdot\Aa(\nabla\phi_i^{1,0}+\ee_i)=0,\]
with $\expec{\phi^{1,0}}=0$.
The corresponding two-scale expansion $H^1_\e[\bar v]\,:=\,\bar v+\e\phi_i^{1,0}(\frac\cdot\e)\nabla_i\bar v$ is then viewed as an intrinsic Taylor expansion of the limiting profile $\bar v$ in terms of $\Aa$-harmonic coordinates.
In order to describe oscillations with finer accuracy, higher-order correctors are iteratively defined to correct higher-order polynomials and make them adapted to the heterogeneous wave operator. More precisely, higher-order correctors $\{\phi^{n,m}\}_{n,m}$ are defined in such a way that, for any polynomial ${\bar p}$ in space-time variables $x,t$, the corrected polynomial
\begin{equation}\label{eq:correct-pol}
H^\infty[{\bar p}]\,:=\,\sum_{n,m\ge0}\phi^{n,m}\odot\nabla^n\partial_t^m{\bar p}
\end{equation}
captures oscillations of the heterogeneous wave operator in the sense that
\[(\partial_t^2-\nabla\cdot\Aa\nabla)H^\infty[{\bar p}]\]
has no periodic oscillations any longer. In that case, this quantity can automatically be written as
\begin{equation}\label{eq:describe-osc-op}
(\partial_t^2-\nabla\cdot\Aa\nabla)H^\infty[{\bar p}]\,=\,\bigg(\partial_t^2-\nabla\cdot\Big(\sum_{n\ge1}\sum_{m\ge0}\bar\Aa^{n,m}\odot\nabla^{n-1}\partial_t^m\Big)\nabla\bigg){\bar p},
\end{equation}
for some suitable family $\{\bar\Aa^{n,m}\}_{n,m}$ of constant tensors; see Proposition~\ref{prop:form-hyperb}. This reflects the fact that on large scales the heterogeneous constitutive relation $\nabla u\mapsto\Aa\nabla u$ is replaced by the effective relation $\nabla\bar u\mapsto\bar\Aa\nabla\bar u$ at leading order, while additional corrective terms must be included when looking for finer accuracy,
\begin{equation}\label{eq:const-law-ref}
\nabla\bar u~~\mapsto~~\Big(\sum_{n\ge1}\sum_{m\ge0}\bar\Aa^{n,m}\odot\nabla^{n-1}\partial_t^m\Big)\nabla\bar u.
\end{equation}
The difference with the elliptic setting is that in the present hyperbolic setting both space and time variables need to be corrected alike.
Comparing to the spectral approach, note that the presence of mixed space-time derivatives in the resulting homogenized equation~\eqref{eq:form-hom} leads to additional well-posedness issues: naïve notions of solution display a secular growth, which was first avoided in~\cite{ALR} as explained at the end of Section~\ref{sec:discuss}.

\subsection{Ill-prepared data}\label{sec:ill-posed}
Up to now, we have focused on well-prepared data, or equivalently, on ancient solutions of the heterogeneous wave equation~\eqref{eq:hyperb}. 
We now briefly discuss the effect of ill-prepared data by means of Floquet--Bloch theory, which provides further insight on the claims of Remark~\ref{rem:ill-prepared}.
For that purpose, as in Proposition~\ref{prop:Bloch}, we assume that the coefficient field~$\Aa$ is periodic and that the impulse $f$ has compactly supported spatial Fourier transform: in this setting, for $\e$ small enough, the operator $\Lc_{\e\xi}$ has discrete spectrum and its lowest eigenvalue~$\lambda_{\e\xi}$ is simple for all $\xi$ in the Fourier support of~$f$.
We then show that, if initial data do not fit spatial oscillations of the ground state,
their projection on higher modes propagates and maintains forever, leading to an~$O(\e)$ contribution that consists of a superposition of typically incommensurate time oscillations with~$O(\e^{-1})$ frequency. This almost-periodic structure prohibits any approximate description by a two-scale expansion beyond accuracy~$O(\e)$.
More precisely, we consider the initial-value problem
\begin{gather*}
\left\{\begin{array}{ll}
\big(\partial_t^2-\nabla\cdot\Aa(\tfrac\cdot\e+q)\nabla\big)z_\e(\cdot,q)=0,&\text{in $\R^+\times\R^d$},\\
z_\e(\cdot,q)|_{t=0}=u^\circ,&\text{in $\R^d$},\\
\partial_tz_\e(\cdot,q)|_{t=0}=v^\circ,&\text{in $\R^d$}.
\end{array}\right.
\end{gather*}
By Floquet--Bloch theory, arguing as for~\eqref{eq:dec-ueps-0}, we now get
\begin{multline*}
z_\e(x,q)\\
=\,\int_{\R^d}e^{ix\cdot\xi}(\pi_{\e\xi}1)(\tfrac x\e+q)\bigg(\cos\big(t(\tfrac1{\e^2}\lambda_{\e\xi})^{1/2}\big)\,\hat u^\circ(\xi)+\frac{\sin\big(t(\tfrac1{\e^2}\lambda_{\e\xi})^{1/2}\big)}{(\tfrac1{\e^2}\lambda_{\e\xi})^{1/2}}\,\hat v^\circ(\xi)\bigg)\,d^*\xi\\
+r_\e(x,q),
\end{multline*}
where the remainder $r_{\e}$ contains all the effects of initial ill-preparedness,
\begin{multline*}
r_\e(x,q)\,:=\,\int_{\R^d}e^{ix\cdot\xi}\Big(\cos\big(t(\tfrac1{\e^2}\Lc_{\e\xi})^{1/2}\big)\pi_{\e\xi}^\bot1\Big)(\tfrac x\e+q)\,\hat u^\circ(\xi)\,d^*\xi\\
+\int_{\R^d}e^{ix\cdot\xi}\bigg(\frac{\sin\big(t(\tfrac1{\e^2}\Lc_{\e\xi})^{1/2}\big)}{(\tfrac1{\e^2}\Lc_{\e\xi})^{1/2}}\pi_{\e\xi}^\bot1\bigg)(\tfrac x\e+q)\,\hat v^\circ(\xi)\,d^*\xi.
\end{multline*}
Denoting by $\{\nu_{\e\xi}^n\}_{n\ge0}$ the non-decreasing sequence of eigenvalues of $\Lc_{\e\xi}$ on $\Ld^2(Q)$ (repeated according to multiplicity, with $\nu_{\e\xi}^1>\nu_{\e\xi}^0=\lambda_{\e\xi}$), and denoting by $\{\gamma_{\e\xi}^n\}_{n\ge0}$ a corresponding sequence of normalized eigenfunctions, the above remainder can be written as
\begin{multline*}
r_\e(x,q)\\
=\,\sum_{n=1}^\infty\int_{\R^d}e^{ix\cdot\xi}\kappa^n_{\e\xi}\gamma_{\e\xi}^n(\tfrac x\e+q)
\bigg(\cos\big(\tfrac t\e(\nu_{\e\xi}^n)^{1/2}\big)\hat u^\circ(\xi)
+\frac{\e\sin\big(\frac t\e(\nu^n_{\e\xi})^{1/2}\big)}{(\nu^n_{\e\xi})^{1/2}}\,\hat v^\circ(\xi)\bigg)\,d^*\xi,
\end{multline*}
in terms of $\kappa^n_{\e\xi}:=\expecm{\gamma_{\e\xi}^n}=O(\e\xi)$.
As claimed, this shows that the remainder $r_{\e}$ is of order $O(\e)$ and oscillates both in space and time with~$O(\e^{-1})$ frequency. Moreover, at a fixed Fourier mode $\xi$, the time dependence is (typically) almost-periodic, which prohibits any approximate description by means of a two-scale expansion beyond accuracy $O(\e)$.

As explained in Remark~\ref{rem:ill-prepared}, these complicated oscillations are naturally removed by taking time averages. In terms of the above remainder $r_\e$, this amounts to noting that, given~$\theta\in C^\infty_c(\R)$, we formally have for all $\lambda>0$,
\begin{eqnarray*}
\int_0^\infty\theta(t-s)\cos(\tfrac s\e\lambda^{1/2})\,ds
&=&\e^2\sum_{k=0}^\infty\e^{2k}{(-1)^{k+1}}\lambda^{-k-1}\theta^{(2k+1)}(t),\\
\int_0^\infty\theta(t-s)\frac{\e\sin(\tfrac s\e\lambda^{1/2})}{\lambda^{1/2}}\,ds
&=&\e^2\sum_{k=0}^\infty\e^{2k}{(-1)^{k+1}}\lambda^{-k-1}\theta^{(2k)}(t),
\end{eqnarray*}
so that the above flow decomposition is precisely turned into an expansion of the form~\eqref{eq:full-decomp-per-u} for the time-averaged flow~\eqref{eq:time-average-form}.

\section{Spectral approach and two-scale expansion}\label{sec:spectral}
This section is devoted to the definition of spectral correctors and to the proof of Theorems~\ref{th:main-per} and~\ref{th:main2} and of Corollary~\ref{cor:summable}, including the well-posedness of the formal homogenized equation~\eqref{eq:goal2}.

\subsection{Definition of spectral correctors} \label{sec:def-corr-spec}
We start by recalling the definition of the spectral correctors $\{\psi^n\}_n$ and homogenized tensors $\{\bar \bb^n\}_n$, as first introduced in \cite[Definition~2.1]{BG} and motivated in Section~\ref{sec:Bloch}, cf.~\eqref{e.spec-intro1}. We further introduce Fourier multipliers~$\{\gamma_{\ell}\}_\ell$, which are proxies for the factor $\expec{|\psi_{\e\xi}|^2}^{-1}$ in~\eqref{eq:link-w-psi}, \eqref{eq:expand-pibot1}, and~\eqref{eq:expand-Psixieps}.

\begin{defin}[Spectral correctors]\label{def:spec-corr-psi}
For all $\xi\in\R^d$, we define $\{\check\psi^n_\xi,\check\lambda^n_\xi\}_{n\ge0}$ inductively via $\check\psi^0_\xi=1$, $\check\lambda^0_\xi=0$, and for all $n\ge1$ we define $\check\psi^n_\xi\in H^1_\per(Q)$ as the periodic scalar field that has vanishing average $\expecm{\check\psi^n_\xi}=0$ and satisfies
\begin{equation}\label{e.spec-1}
-\nabla\cdot\Aa\nabla\check\psi^{n}_\xi=\nabla\cdot(\Aa i\xi\check\psi^{n-1}_\xi)+i\xi\cdot\Aa(\nabla\check\psi^{n-1}_\xi+i\xi\check\psi^{n-2}_\xi)+\sum_{k=2}^n\check\lambda_\xi^k\check\psi_\xi^{n-k},
\end{equation}
where we have defined
\begin{equation}\label{e.spec-2}
\check \lambda^n_\xi=-\expec{i\xi\cdot\Aa(\nabla\check\psi^{n-1}_\xi+ i\xi\check\psi^{n-2}_\xi)},
\end{equation}
recalling that we implicitly set $\check\psi_\xi^{n}\equiv0$ for $n<0$ for notational convenience.
{Factoring out powers of $i\xi$ in the above,} we may then define the real-valued symmetric tensor fields~$\{\psi^n\}_n$ and symmetric tensors $\{\bar\bb^{n}\}_n$ via
\begin{equation}\label{eq:decomp-check-rep}
\psi^n \odot (i\xi)^{\otimes n}=  \check \psi^n_\xi,\qquad \xi\cdot (\bar \bb^{n} \odot (i\xi)^{\otimes(n-1)})\xi=  \check \lambda^{n+1}_\xi.
\end{equation}
We shall also use the notation $\bar \Aa:=\bar \bb^1$ as the latter coincides with the homogenized coefficient for the associated elliptic equation.
Next, for all $\ell\ge 1$, we can define the following Fourier multiplier,
\begin{equation}\label{e.def:gamma}
\gamma_{\ell}(\xi)\,:=\, \expecm{\textstyle|\sum_{n=0}^{\ell} \psi^n\odot (i\xi)^{\otimes n}|^2}^{-1}.\qedhere
\end{equation}
\end{defin}

\begin{rems}$ $\nopagebreak
\begin{enumerate}[$\bullet$]
\item As the choice~\eqref{e.spec-2} precisely ensures that the right-hand side of~\eqref{e.spec-1} has vanishing average, an iterative use of the Poincaré inequality on the unit cell $Q$ easily yields the following estimates{:} for all $n$,
\begin{equation}\label{e.bd-coeff}
|\bar \bb^n|+\|\psi^n\|_{H^1(Q)} \le C^n.
\end{equation}
This ensures in particular the well-posedness of equations~\eqref{e.spec-1} \&~\eqref{e.spec-2}.
\smallskip\item Since by definition we have
\[\expecm{\textstyle\sum_{n=0}^{\ell} \psi^n\odot (i\xi)^{\otimes n}}=1,\]
Jensen's inequality yields
\[\textstyle\expecm{|\sum_{n=0}^{\ell} \psi^n\odot (i\xi)^{\otimes n}|^2}\,\ge\,1,\]
which ensures that the definition~\eqref{e.def:gamma} of the Fourier multiplier $\gamma_\ell(\xi)$ indeed makes sense and satisfies $\gamma_{\ell}(\xi) \le1$ for all $\xi$. In addition, in view of~\eqref{e.bd-coeff}, we can expand, for $|\xi|\ll1$ small enough,
\begin{equation}\label{e.expansion-gamma}
\gamma_\ell(\xi)=1+\sum_{k=2}^\infty \gamma_{\ell}^k \odot \xi^{\otimes k},
\end{equation}
for some coefficients $\{\gamma_{\ell}^k\}_k$. In addition, this can be truncated to any order $n\ge0$,
\begin{equation}\label{eq:expand-gam}
\Big|\gamma_\ell(\xi)-\sum_{k=0}^n\gamma_\ell^k\odot\xi^{\otimes k}\Big|\,\le\,(C|\xi|)^{n+1}.\qedhere
\end{equation}
\item {The uniform ellipticity of $\Aa$, cf.~\eqref{eq:unif-ell}, ensures that $\bar\Aa=\bar\bb^1$ is elliptic, and therefore we have $\check\lambda_\xi^2=\xi\cdot\bar\Aa\xi\ge\lambda|\xi|^2$ for all $\xi\in\R^d$.}
\end{enumerate}
\end{rems}

As a consequence of the self-adjointness of fibered operators $\{\Lc_\xi\}_\xi$ in Section~\ref{sec:Bloch},
their first eigenvalues $\{\lambda_\xi\}_\xi$ are real, hence, in view of~\eqref{eq:def-psi-expand} and~\eqref{eq:decomp-check-rep}, we deduce that $\bar\bb^n$ must vanish for all $n$ even. Equivalently, $\check\lambda_\xi^n$ vanishes for $n$ odd. This can alternatively be proven by a direct computation starting from definition~\eqref{e.spec-2}, cf.~\cite[Proposition~1]{BG}; a similar (more involved) argument will be provided in Proposition~\ref{prop:even-coeff}, so we skip the detail for now.

\begin{prop}\label{prop:vanish}
We have $\bar\bb^{n}=0$ for all $n$ even.
\end{prop}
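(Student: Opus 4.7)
Via the decomposition $\check\lambda^{n+1}_\xi=\xi\cdot(\bar\bb^n\odot(i\xi)^{\otimes(n-1)})\xi$ from \eqref{eq:decomp-check-rep}, the claim is equivalent to $\check\lambda^m_\xi=0$ for every odd integer $m\ge1$ (indeed, the vanishing of the homogeneous polynomial $\check\lambda^{n+1}_\xi$ of degree $n+1$ in $\xi$ forces the symmetrized tensor $\bar\bb^n$ to vanish). My strategy is to combine a parity observation with the formal self-adjointness of the Floquet operator $\Lc_\xi$.

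The parity observation is immediate. Since each tensor $\psi^n$ is real-valued, the identity $\check\psi^n_\xi=\psi^n\odot(i\xi)^{\otimes n}$ gives $\overline{\check\psi^n_\xi}=(-1)^n\check\psi^n_\xi$, so that $\check\psi^n_\xi$ is real for $n$ even and purely imaginary for $n$ odd. Inserting this into formula~\eqref{e.spec-2} and counting factors of $i$ yields that $\check\lambda^n_\xi$ is purely imaginary for $n$ odd (and real for $n$ even). It thus suffices to prove that $\check\lambda^n_\xi$ is in fact real for every $n$.

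For this reality statement, I would exploit self-adjointness. By construction, the formal power series $\psi_{\e\xi}:=\sum_n\e^n\check\psi^n_\xi$ and $\lambda_{\e\xi}:=\sum_n\e^n\check\lambda^n_\xi$ satisfy the eigenvalue relation $\Lc_{\e\xi}\psi_{\e\xi}=\lambda_{\e\xi}\psi_{\e\xi}$ order by order in $\e$, with $\expec{\psi_{\e\xi}}=1$. Pairing with $\psi_{\e\xi}$ in $\Ld^2(Q)$ and integrating by parts (using the $Q$-periodicity and the real symmetry of $\Aa$) yields the formal identity
\[\lambda_{\e\xi}\,\|\psi_{\e\xi}\|^2_{\Ld^2(Q)}\,=\,\int_Q\overline{(\nabla+i\e\xi)\psi_{\e\xi}}\cdot\Aa(\nabla+i\e\xi)\psi_{\e\xi}.\]
Decomposing $(\nabla+i\e\xi)\psi_{\e\xi}$ into real and imaginary parts and exploiting the symmetry of $\Aa$, the right-hand side equals its own complex conjugate coefficient by coefficient in $\e$, hence is real at each order. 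The same is true of $\|\psi_{\e\xi}\|^2_{\Ld^2(Q)}$, whose $\e^0$ coefficient equals~$1$. Equating $\e^N$ coefficients on both sides and arguing by induction on $N$ (using that $\check\lambda^k_\xi$ has already been shown to be real for every $k<N$) then forces $\check\lambda^N_\xi$ to be real as well.

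Combining the two steps, for $m$ odd the quantity $\check\lambda^m_\xi$ is simultaneously real and purely imaginary, hence it vanishes, which is the desired conclusion. The main technical point is the order-by-order bookkeeping in the self-adjointness identity above; the parity observation is elementary once the reality of the tensors $\psi^n$ is granted, and the passage from $\check\lambda^{n+1}_\xi\equiv0$ to $\bar\bb^n=0$ is algebraic. A variant would be to extract reality directly from the analytic spectral perturbation theory of Section~\ref{sec:Bloch}, but the formal Rayleigh-quotient argument above stays at the level of \eqref{e.spec-1}--\eqref{e.spec-2} and does not require that analytic framework.
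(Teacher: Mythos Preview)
Your proof is correct and follows essentially the same self-adjointness strategy that the paper sketches just before the statement: parity of the $\check\psi^n_\xi$'s forces $\check\lambda^m_\xi$ to be purely imaginary for $m$ odd, while self-adjointness of $\Lc_{\e\xi}$ forces it to be real. The only minor difference is that the paper invokes the analytic perturbation theory of Section~\ref{sec:Bloch} to deduce reality of the eigenvalue $\lambda_{\e\xi}$ directly, whereas you reproduce this reality order by order via a formal Rayleigh-quotient identity---a slightly more self-contained variant that you yourself flag at the end.
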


As is common in the elliptic setting, it is useful to further introduce suitable flux correctors, which will allow us to refine error estimates by directly exploiting cancellations due to fluxes having vanishing average.

\begin{defin}[Spectral flux correctors and auxiliary correctors]\label{def:spec-fluc-corr-psi}
For $n\ge0$, we define the spectral flux correctors $\sigma^n:=(\sigma^n_{i_1\ldots i_n})_{1\le i_1,\ldots,i_n\le d}$  by
\[\sigma_{i_1\ldots i_n}^n=(\nabla\Phi_{i_1\ldots i_n}^n)^T,\]
where $\Phi^n_{i_1\ldots i_n}\in H^1_\per(Q)^d$ is the periodic vector field that satisfies $\expecm{\Phi_{i_1\ldots i_n}^n}=0$ and
\[-\triangle (\Phi_{i_1\ldots i_n}^n)_{i_{n+1}}\,=\,e_{i_{n+1}}\cdot \Aa(\nabla\psi^{n}_{i_1\ldots i_{n}}+\psi^{n-1}_{i_1\ldots i_{n-1}}\ee_{i_n})
-\sum_{k=2}^{n+1}(\ee_{i_{k}}\cdot\bar\bb^{k-1}_{i_1\ldots i_{k-2}}\ee_{i_{k-1}})\psi^{n+1-k}_{i_{k+1}\ldots i_{n+1}}.
\]
For $n\ge2$, we also define the auxiliary correctors $\rho^n:=(\rho^n_{i_1\ldots i_n})_{1\le i_1,\ldots,i_n\le d}$ by
\[\rho^n_{i_1\ldots i_n}=\nabla_{i_n}\Psi^n_{i_1\ldots i_{n-1}},\]
where $\Psi^n_{i_1\ldots i_{n-1}}\in H^1_\per(Q)$ is the periodic vector field that satisfies $\expecm{\Psi^n_{i_1\ldots i_{n-1}}}=0$ and
\begin{equation*}
-\triangle\Psi^{n}_{i_1\ldots i_{n-1}}=\psi^{n-1}_{i_1\ldots i_{n-1}}.\qedhere
\end{equation*}
\end{defin}

Next, we turn to the definition of the correctors $\{\zeta^{n,m}\}_{n,m}$, as motivated in Section~\ref{sec:Bloch}, cf.~\eqref{e.spec-intro3} and~\eqref{e.spec-intro4re}, and we start with the case $m=0$.

\begin{lem}\label{lem:spec-corr-zeta-1}
For all $n\ge0$ and $\xi\in \R^d$, we recursively define $\check\zeta^{n,0}_\xi\in H^1_{{\per}}(Q)$ as the unique periodic scalar field that satisfies
\begin{multline}\label{e.zeta0n}
-\nabla\cdot\Aa\nabla\check \zeta^{n,0}_\xi\,=\,\nabla\cdot(\Aa i\xi\check\zeta^{n-1,0}_\xi)
+i\xi\cdot\Aa\big(\nabla\check\zeta^{n-1,0}_\xi+i\xi\check\zeta^{n-2,0}_\xi\big)\\
-\check\psi^{n+1}_\xi+\sum_{k=0}^{n+1}\expec{\overline{\check\psi^{n+1-k}_\xi}\check\psi^k_\xi},
\end{multline}
with integration constant
\begin{equation}\label{eq:expec-zeta}
\expecm{\check \zeta^{n,0}_\xi}\,:=\,-\sum_{k=1}^{n}\sum_{l=2}^{k+2}(\overline{\check \lambda_\xi^l/\check \lambda_\xi^2})\,\expec{\overline{\check\psi^{k+2-l}_\xi}\check\zeta^{n-k,0}_\xi}.
\end{equation}
With this choice of the constant, the above equation for $\check\zeta^{n,0}_\xi$ is indeed well-posed by the Fredholm alternative as it iteratively ensures that the right-hand side has vanishing average,
\begin{equation}\label{eq:chocolate}
\expec{i\xi\cdot\Aa\big(\nabla\check\zeta^{n-1,0}_\xi+i\xi\check\zeta^{n-2,0}_\xi\big)}+\sum_{k=0}^{n+1}\expec{\overline{\check\psi^{n+1-k}_\xi}\check\psi^k_\xi}=0.
\end{equation}
{Factoring out powers of $i\xi$,} we may then define the real-valued symmetric tensor field~$\zeta^{n,0}$ such that
\[\zeta^{n,0} \odot (i\xi)^{\otimes (n+1)}\,:=\, \check\zeta^{n,0}_{\xi}.\qedhere\]
\end{lem}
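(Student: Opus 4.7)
The plan is to establish the three assertions of the lemma—(i) the Fredholm compatibility \eqref{eq:chocolate}, (ii) consistency of the integration constant \eqref{eq:expec-zeta} with (i), and (iii) the polynomial factorization $\check\zeta^{n,0}_\xi = \zeta^{n,0}\odot(i\xi)^{\otimes(n+1)}$ with real-valued $\zeta^{n,0}$—simultaneously by induction on $n$. The base case $n=0$ is immediate: since $\check\zeta^{m,0}_\xi\equiv 0$ for $m<0$, the identity \eqref{eq:chocolate} reduces to $\expec{\overline{\check\psi^1_\xi}} + \expec{\check\psi^1_\xi} = 0$, which holds since $\expec{\check\psi^n_\xi} = 0$ for $n\ge1$; the sum in \eqref{eq:expec-zeta} is empty so $\expec{\check\zeta^{0,0}_\xi} = 0$; and the PDE $-\nabla\cdot\Aa\nabla\check\zeta^{0,0}_\xi = -\check\psi^1_\xi$ renders $\check\zeta^{0,0}_\xi$ linear in $i\xi$ by linearity.

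The conceptual origin of both \eqref{eq:chocolate} and \eqref{eq:expec-zeta} is transparent in the formal-series framework of Section~\ref{sec:Bloch}. Setting $\psi_{\e\xi}:=\sum_{j\ge0}\e^j\check\psi^j_\xi$ and $\tilde\Psi^0_{\xi,\e}:=\sum_{k\ge0}\e^k\check\zeta^{k,0}_\xi$, the hierarchies \eqref{e.spec-1}--\eqref{e.spec-2} and \eqref{e.zeta0n} correspond respectively to $\Lc_{\e\xi}\psi_{\e\xi} = \lambda_{\e\xi}\psi_{\e\xi}$ and $\Lc_{\e\xi}\tilde\Psi^0_{\xi,\e} = \tfrac{1}{\e}(\expec{|\psi_{\e\xi}|^2}-\psi_{\e\xi})$. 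Taking the $Q$-average of the second equation at order $\e^n$ directly produces \eqref{eq:chocolate}. On the other hand, the formal self-adjointness of $\Lc_{\e\xi}$ (consequence of the symmetry of $\Aa$) together with the eigenvalue equation implies $\overline{\lambda_{\e\xi}}\expec{\tilde\Psi^0_{\xi,\e}\overline{\psi_{\e\xi}}} = \expec{\Lc_{\e\xi}\tilde\Psi^0_{\xi,\e}\overline{\psi_{\e\xi}}} = \tfrac{1}{\e}\expec{(\expec{|\psi_{\e\xi}|^2}-\psi_{\e\xi})\overline{\psi_{\e\xi}}} = 0$ (using $\expec{\psi_{\e\xi}}=1$); expanding this identity at order $\e^{n+2}$ and solving for $\expec{\check\zeta^{n,0}_\xi}$ by dividing by the nonvanishing $\check\lambda^2_\xi = \xi\cdot\bar\Aa\xi$ yields precisely \eqref{eq:expec-zeta}.

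Rigorously, at inductive level $n$, assuming $\check\zeta^{m,0}_\xi$ is constructed via \eqref{eq:expec-zeta} and \eqref{e.zeta0n} for all $m\le n-1$, one proves \eqref{eq:chocolate} by implementing the above self-adjointness argument at the PDE level: for each pair $(j,k)$ with $j+k=n+1$, test the equation \eqref{e.spec-1} for $\check\psi^j_\xi$ against $\overline{\check\zeta^{k-1,0}_\xi}$ and the equation \eqref{e.zeta0n} for $\check\zeta^{k-1,0}_\xi$ against $\overline{\check\psi^j_\xi}$; integration by parts together with the symmetry of $\Aa$ cancels the leading $-\nabla\cdot\Aa\nabla$ pieces between the two tests, while the remaining $\check\lambda$-contributions telescope via \eqref{e.spec-2} and the inductively chosen integration constants. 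Summation over $(j,k)$ and reindexing produce \eqref{eq:chocolate}; the Fredholm alternative then yields the unique $\check\zeta^{n,0}_\xi\in H^1_\per(Q)$ with average \eqref{eq:expec-zeta}. Claim (iii) is then routine: inspection of \eqref{e.zeta0n} and \eqref{eq:expec-zeta} together with the inductive factorizations of $\check\psi^j_\xi$ and $\check\zeta^{m,0}_\xi$ for $m<n$ shows that both sides are homogeneous of degree $n+1$ in $i\xi$; and reality of $\zeta^{n,0}$ (equivalently $\overline{\check\zeta^{n,0}_\xi}=(-1)^{n+1}\check\zeta^{n,0}_\xi$) follows by conjugating \eqref{e.zeta0n}, using $\overline{\check\psi^m_\xi}=(-1)^m\check\psi^m_\xi$ and $\overline{\check\lambda^l_\xi}=(-1)^l\check\lambda^l_\xi$, and invoking uniqueness.

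The main technical obstacle is the inductive verification of \eqref{eq:chocolate}: although the underlying self-adjointness identity is conceptually transparent, its rigorous PDE-level counterpart requires careful bookkeeping of multiple testing identities and reindexing of sums over triples $(j,k,l)$ that interact nontrivially with the previously prescribed integration constants.
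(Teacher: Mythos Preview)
Your proposal is correct and follows essentially the same approach as the paper: both arguments prove \eqref{eq:chocolate} inductively by testing the corrector equation for $\check\psi^{k+1}_\xi$ against $\check\zeta^{n,0}_\xi$ (equivalently, your symmetric pairing over $(j,k)$ with $j+k=n+1$), exploiting the symmetry of $\Aa$ to cancel the leading terms, and observing that the residual telescopes precisely when the integration constants \eqref{eq:expec-zeta} are enforced at lower levels. Your formal-series motivation via $\Lc_{\e\xi}$ and the explicit treatment of the homogeneity and reality of $\zeta^{n,0}$ are nice additions that the paper leaves implicit, but the core mechanism is identical.
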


\begin{proof}
It suffices to show by induction that~\eqref{eq:chocolate} holds for all $n\ge0$.
For $k\ge0$, testing the defining equation~\eqref{e.spec-1} for $\check\psi^{k+1}_\xi$ with $\check\zeta^{n,0}_\xi$, we get after averaging and integrating by parts,
\begin{multline*}
\expec{\overline{\check \psi^{k+1}_\xi} (-\nabla \cdot \Aa \nabla \check\zeta^{n,0}_\xi)}\\
=\, \expec{\overline{\check\psi^k_\xi} \Big(\nabla \cdot (\Aa i \xi \check\zeta_\xi^{n,0})+i \xi \cdot \Aa\nabla \check\zeta_\xi^{n,0}\Big) +\overline{\check \psi_\xi^{k-1}}(i\xi \cdot\Aa  i\xi)\check \zeta_\xi^{n,0}}
+\sum_{l=2}^{k+1}\overline{\check \lambda_\xi^l}\,\expec{\overline{\check \psi_\xi^{k+1-l}}\check\zeta_\xi^{n,0}}.
\end{multline*}
Now inserting the defining equation~\eqref{e.zeta0n} for $\check\zeta_\xi^{n,0}$, and reorganizing the terms, this becomes
\begin{multline*}
\expec{\overline{\check \psi_\xi^{k+1}} \Big(\nabla\cdot(\Aa i\xi\check\zeta_\xi^{n-1,0})+i\xi\cdot\Aa(\nabla\check\zeta_\xi^{n-1,0}+i\xi\check\zeta_\xi^{n-2,0})\Big)
+\overline{\check\psi_\xi^k} (i \xi \cdot \Aa i\xi)\check\zeta_\xi^{n-1,0} }\\
\,=\, \expec{\overline{\check\psi_\xi^k} \Big(\nabla \cdot (\Aa i \xi \check\zeta_\xi^{n,0})+ i \xi \cdot \Aa(\nabla \check\zeta_\xi^{n,0}+i\xi\check\zeta_\xi^{n-1,0})\Big)
+\overline{\check \psi_\xi^{k-1}}(i\xi \cdot\Aa  i\xi)\check \zeta_\xi^{n,0}}\\
+\expec{\overline{\check \psi_\xi^{k+1}}\check\psi_\xi^{n+1}}
+\sum_{l=2}^{k+1}\overline{\check \lambda_\xi^l}\,\expec{ \overline{\check \psi_\xi^{k+1-l}}\check\zeta_\xi^{n,0}}.
\end{multline*}
Iterating this identity $(n+1)$-times, starting from $k=0$, and using that $\psi^0=1$ and that $\expec{\psi^m}=0$ for all $m\geq 1$, we deduce after straightforward simplifications
\begin{equation*}
\expec{i\xi \cdot \Aa(\nabla \check\zeta_\xi^{n,0}+i\xi\check\zeta_\xi^{n-1,0})}+\sum_{k=0}^{n+2}\expec{\overline{\check \psi_\xi^{n+2-k}}\check\psi_\xi^{k}}
\,=\,-\sum_{k=2}^{n+1}\sum_{l=2}^{k}\overline{\check \lambda_\xi^l}\,\expec{ \overline{\check \psi_\xi^{k-l}}\check\zeta_\xi^{n+1-k,0}},
\end{equation*}
or equivalently,
\begin{multline*}
\expec{i\xi \cdot \Aa(\nabla \check\zeta_\xi^{n,0}+i\xi\check\zeta_\xi^{n-1,0})}+\sum_{k=0}^{n+2}\expec{\overline{\check \psi_\xi^{n+2-k}}\check\psi_\xi^{k}}\\
\,=\,{-}\overline{\check \lambda_\xi^2}\,\expecm{\check\zeta_\xi^{n-1,0}}
{-}\sum_{k=1}^{n-1}\sum_{l=2}^{k+2}\overline{\check \lambda_\xi^l}\,\expec{ \overline{\check \psi_\xi^{k+2-l}}\check\zeta_\xi^{n-k-1,0}}.
\end{multline*}
Now the choice~\eqref{eq:expec-zeta} of the integration constant for $\check\zeta_\xi^{n-1,0}$ precisely ensures that the right-hand side vanishes, which proves that the identity~\eqref{eq:chocolate} holds with $n$ replaced by $n+1$.
\end{proof}

Next, we turn to the construction of corresponding correctors $\{\zeta^{n,m}\}_{n}$ for $m\ge1$, as motivated in~\eqref{e.spec-intro4re}. The proof of this lemma is analogous to the one above and is skipped for shortness.

\begin{lem}\label{lem:spec-corr-zeta-2}
Given $m\ge1$, for all $n\ge0$ and $\xi\in\R^d$, we recursively define $\check\zeta^{n,m}_\xi\in H^1_{{\per}}(Q)$ as the unique periodic scalar field that satisfies
\begin{equation}\label{e.zetajn}
-\nabla\cdot\Aa\nabla\check\zeta^{n,m}_\xi=
\nabla\cdot(\Aa i\xi\check\zeta^{n-1,m}_\xi)+
i\xi\cdot\Aa(\nabla\check\zeta^{n-1,m}_\xi+i\xi\check\zeta^{n-2,m}_\xi)+\check\zeta^{n,m-1}_\xi,
\end{equation}
with integration constant
\[\expecm{\check\zeta^{n,m}_\xi}\,:=\,(\overline{1/\check\lambda_\xi^2})\sum_{k=0}^{n+2}\expec{\overline{\check\psi^{n+2-k}_\xi}\check\zeta^{k,m-1}_\xi}-\sum_{k=1}^{n}\sum_{l=2}^{k+2}(\overline{\check\lambda_\xi^l/\check\lambda_\xi^2})\,\expec{\overline{\check\psi^{k+2-l}_\xi}\check\zeta^{n-k,m}_\xi}.\]
With this choice of the integration constants, the above equation for $\check\zeta^{n,m+1}_\xi$ is indeed well-posed by the Fredholm alternative as it iteratively ensures that the right-hand side has vanishing average,
\begin{equation}\label{eq:chocolate-2}
\expec{i\xi\cdot\Aa\big(\nabla\check\zeta^{n-1,m}_\xi+i\xi\check\zeta^{n-2,m}_\xi\big)+\check\zeta^{n,m-1}_\xi}\,=\,0.
\end{equation}
In particular, $\expecm{\check\zeta_\xi^{0,m}}=0$ for all $m\ge0$.
{Factoring out powers of $i\xi$,} we may then define the real-valued symmetric tensor field $\zeta^{n,m}$ such that
\[\zeta^{n,m} \odot (i\xi)^{\otimes (n+1)}\,:=\,\check\zeta^{n,m}_{\xi}.\qedhere\]
\end{lem}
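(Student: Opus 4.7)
The plan is to mirror the proof of Lemma~\ref{lem:spec-corr-zeta-1} via a double induction: an outer induction on $m\ge 1$, with base case $m=0$ provided by that lemma, and an inner induction on $n\ge 0$. Suppose that the fields $\{\check\zeta^{k,m'}_\xi\}_{k\ge0,\,m'<m}$ together with $\{\check\zeta^{k,m}_\xi\}_{k<n}$ have already been constructed and that the solvability identity~\eqref{eq:chocolate-2} holds at each such level. By Fredholm's alternative on $Q$, it suffices to prove~\eqref{eq:chocolate-2} at level $(n+1,m)$, which is exactly the vanishing-average condition for the right-hand side of~\eqref{e.zetajn} at level $(n,m)$; this then delivers $\check\zeta^{n,m}_\xi\in H^1_{\per}(Q)$ uniquely up to an additive constant, which is pinned down by the prescribed formula for $\expecm{\check\zeta^{n,m}_\xi}$. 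The real-valued symmetric tensor $\zeta^{n,m}$ is then extracted by factoring out powers of $i\xi$ as in the $m=0$ case.

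The key step is the same testing-and-telescoping calculation employed in the proof of Lemma~\ref{lem:spec-corr-zeta-1}. For each $k\ge 0$, one tests~\eqref{e.spec-1} for $\check\psi^{k+1}_\xi$ against $\check\zeta^{n,m}_\xi$, integrates by parts using the symmetry of $\Aa$, and then inserts~\eqref{e.zetajn} to rewrite $-\nabla\cdot\Aa\nabla\check\zeta^{n,m}_\xi$ on the other side. Equating the two resulting expressions produces a recurrence linking quantities at adjacent values of $k$, with an extra source-type contribution of the form $\expec{\overline{\check\psi^{k+1}_\xi}\check\zeta^{n,m-1}_\xi}$ --- the analogue, for $m\ge 1$, of the term $-\expec{\overline{\check\psi^{k+1}_\xi}\check\psi^{n+1}_\xi}$ that arises in the $m=0$ argument --- together with contributions weighted by the eigenvalue corrections $\overline{\check\lambda^l_\xi}$. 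Iterating this recurrence $(n{+}1)$ times from $k=0$ upward and using $\check\psi^0\equiv 1$ and $\expec{\check\psi^j_\xi}=0$ for $j\ge 1$, one arrives at a telescoped identity of the same structure as in the $m=0$ proof, but with $\psi$-$\psi$ cross terms replaced by $\psi$-$\check\zeta^{\cdot,m-1}$ cross terms and with the $\overline{\check\lambda^l_\xi}$-contributions now carrying $\check\zeta^{\cdot,m}_\xi$ factors. The two sums in the prescribed formula for $\expecm{\check\zeta^{n-1,m}_\xi}$ are engineered precisely so that their substitution cancels these two types of residues, leaving exactly~\eqref{eq:chocolate-2} with $n$ replaced by $n+1$.

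The initialisation $n=0$ at level $m\ge 1$ reduces~\eqref{e.zetajn} to $-\nabla\cdot\Aa\nabla\check\zeta^{0,m}_\xi=\check\zeta^{0,m-1}_\xi$, whose solvability requires $\expec{\check\zeta^{0,m-1}_\xi}=0$; this holds by the outer induction hypothesis (and is trivial when $m-1=0$ since the defining formula in Lemma~\ref{lem:spec-corr-zeta-1} reduces to an empty sum at $n=0$). The ``in particular'' claim $\expecm{\check\zeta^{0,m}_\xi}=0$ is then verified by evaluating the prescribed formula at $n=0$, whose second double sum is empty, and checking the cancellation $\sum_{k=0}^{2}\expec{\overline{\check\psi^{2-k}_\xi}\check\zeta^{k,m-1}_\xi}=0$; this last identity is itself an instance of the same testing computation, now applied to the $(0,m{-}1)$ equation against the test function $\overline{\check\psi^2_\xi}$, combined with the outer induction hypothesis at level $m-2$.

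The main obstacle is purely combinatorial: keeping track of the two-index iteration, matching the residual sums against the two pieces of the prescribed formula for $\expecm{\check\zeta^{n,m}_\xi}$, and carefully handling the signs introduced by integration by parts and complex conjugation. No new analytic input is required beyond Fredholm's alternative on the periodic cell, the symmetry of $\Aa$, and the uniform ellipticity $\check\lambda^2_\xi\ge\lambda|\xi|^2$ which legitimises the divisions by $\check\lambda^2_\xi$ present in the integration constants.
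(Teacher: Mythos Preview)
Your proposal is correct and follows exactly the route the paper intends: the paper explicitly states that the proof of this lemma ``is analogous to the one above and is skipped for shortness,'' and your testing-and-telescoping argument---testing~\eqref{e.spec-1} for $\check\psi^{k+1}_\xi$ against $\check\zeta^{n,m}_\xi$, inserting~\eqref{e.zetajn}, iterating, and matching the residual sums against the prescribed integration constant---is precisely the analogue of the proof of Lemma~\ref{lem:spec-corr-zeta-1}, with the $\psi$-$\psi$ cross terms there replaced by the $\psi$-$\check\zeta^{\cdot,m-1}$ terms you identify. Your sketch in fact supplies more detail than the paper does for this lemma.
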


Again, it is useful to further introduce associated flux correctors, which allow {us} to refine error estimates by directly exploiting cancellations due to fluxes having vanishing average.

\begin{defin}[Spectral flux correctors]\label{def:spec-fluc-corr-zeta}
For $n\ge0$, we define the spectral flux corrector $\tau^{n,0}:=(\tau^{n,0}_{i_1\ldots i_{n+1}})_{1\le i_1,\ldots,i_{n+1}\le d}$ by
\[\tau_{i_1\ldots i_{n+1}}^{n,0}=(\nabla\Phi_{i_1\ldots i_{n+1}}^{n,0})^T,\]
where $\Phi^{n,0}_{i_1\ldots i_{n+1}}\in H^1_\per(Q)^d$ is the periodic vector field that satisfies $\expecm{\Phi_{i_1\ldots i_{n+1}}^{n,0}}=0$ and
\[-\triangle\Phi^{n,0}_{i_1\ldots i_{n+1}}=\Aa\big(\nabla\zeta^{n,0}_{i_1\ldots i_{n+1}}+\zeta^{n-1,0}_{i_1\ldots i_{n}}\ee_{i_{n+1}}\big)-\expec{\Aa\big(\nabla\zeta^{n,0}_{i_1\ldots i_{n+1}}+\zeta^{n-1,0}_{i_1\ldots i_{n}}\ee_{i_{n+1}}\big)}.\]
We define the spectral flux corrector $\tau^{n,m}:=(\tau^{n,m}_{i_1\ldots i_{n+1}})_{1\le i_1,\ldots,i_{n+1}\le d}$ for $n\ge-1$ and $m\ge1$ by
\[\tau_{i_1\ldots i_{n+1}}^{n,m}=(\nabla\Phi_{i_1\ldots i_{n+1}}^{n,m})^T,\]
where $\Phi^{n,m}_{i_1\ldots i_{n+1}}\in H^1_\per(Q)^d$ is the periodic vector field that satisfies $\expecm{\Phi_{i_1\ldots i_{n+1}}^{n,m}}=0$ and
\begin{equation*}
-\triangle\Phi^{n,m}_{i_1\ldots i_{n+1}}=\Aa\big(\nabla\zeta^{n,m}_{i_1\ldots i_{n+1}}+\zeta^{n-1,m}_{i_1\ldots i_{n}}\ee_{i_{n+1}}\big)+\zeta_{i_1\ldots i_{n+2}}^{n+1,m-1}\ee_{i_{n+2}}.
\end{equation*}
(Note that the definition of the $\zeta^{n,m}$'s ensures that the expression in the right-hand side has vanishing average.)
\end{defin}

Based on the above definitions, as for~\eqref{e.bd-coeff}, an iterative use of the Poincaré inequality on the unit cell $Q$ easily yields the following estimates.

\begin{lem}\label{lem:cor-est-per}
For all $m,n\ge0$,
\begin{equation}\label{eq:cor-est-per}
\|(\psi^n,\sigma^n,\rho^n)\|_{H^1(Q)} +|\bar \bb^n|\le C^n,\qquad \|(\zeta^{n,m},\tau^{n,m})\|_{H^1(Q)}\lesssim C^{n+m}.\qedhere
\end{equation}
\end{lem}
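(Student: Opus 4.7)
The plan is to prove both estimates by an inductive use of the uniform ellipticity of $\Aa$ and the Poincaré inequality on $Q$, exactly as announced in the text. I will organize the argument into three nested inductions.

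\smallskip

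\textbf{Step 1 (bounds on $\psi^n$, $\bar\bb^n$).} Argue by induction on $n$, with trivial base case $\psi^0\equiv 1$. For the inductive step, formula \eqref{e.spec-2} together with Cauchy--Schwarz gives $|\bar\bb^n|$ in terms of $\|\psi^{n-1}\|_{H^1(Q)}$ and $\|\psi^{n-2}\|_{L^2(Q)}$, which are controlled by the induction hypothesis. Then test equation \eqref{e.spec-1} against $\check\psi^n_\xi$ and use uniform ellipticity on the left-hand side to obtain an $L^2$-bound on $\nabla\check\psi^n_\xi$ in terms of the $L^2$-norms of the right-hand side terms, each of which is controlled by the induction hypothesis combined with the bound on $|\bar\bb^{k-1}|$ for $k\le n$. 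The Poincaré inequality (using $\expecm{\psi^n}=0$) upgrades this to an $H^1(Q)$ bound, with a polynomial-in-$n$ prefactor that is absorbed into a slightly enlarged constant $C$ since $n\lesssim 2^n$. The bounds on $\sigma^n$ and $\rho^n$ then follow directly from standard energy estimates on the Poisson equations defining them in Definition~\ref{def:spec-fluc-corr-psi}, given the bounds on $\psi^k$ and $\bar\bb^k$ already in hand.

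\smallskip

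\textbf{Step 2 (bounds on $\zeta^{n,m}$).} Argue by a nested induction: outer on $m$, inner on $n$. Treat first $m=0$: at each step Cauchy--Schwarz bounds the $L^2$-norm of the right-hand side of \eqref{e.zeta0n} in terms of the bounds on $\check\psi^k_\xi$ from Step~1 and of $\|\check\zeta^{k,0}_\xi\|_{H^1(Q)}$ for $k<n$ from the inner induction hypothesis. The integration constant \eqref{eq:expec-zeta} is controlled by the same means, where the scalar ratio $\check\lambda^l_\xi/\check\lambda^2_\xi$ is bounded using the uniform ellipticity of $\bar\Aa=\bar\bb^1$ (which gives $\check\lambda^2_\xi\ge\lambda|\xi|^2$, cf.\ the remark following Definition~\ref{def:spec-corr-psi}) together with $|\bar\bb^{l-1}|\le C^{l-1}$ from Step~1; this ratio should be interpreted at the tensor level, the algebraic cancellations built into Lemma~\ref{lem:spec-corr-zeta-1} ensuring that $\expecm{\check\zeta^{n,0}_\xi}$ is itself the polynomial $\zeta^{n,0}\odot(i\xi)^{\otimes(n+1)}$ of the correct degree. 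An energy estimate on \eqref{e.zeta0n} with Poincaré then yields the $H^1(Q)$ bound, whose polynomial prefactors in $n$ are absorbed into $C^n$. For $m\ge 1$, the same scheme is applied to equation~\eqref{e.zetajn}, with the extra source $\check\zeta^{n,m-1}_\xi$ being controlled by the outer induction hypothesis; the integration constant from Lemma~\ref{lem:spec-corr-zeta-2} is handled analogously.

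\smallskip

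\textbf{Step 3 (bounds on $\tau^{n,m}$).} These are defined by Poisson equations in Definition~\ref{def:spec-fluc-corr-zeta} whose right-hand sides have vanishing periodic average by construction and involve only the $\zeta^{n,m}$, $\zeta^{n-1,m}$ and $\zeta^{n+1,m-1}$ already controlled in Step~2, so standard energy estimates close the argument directly.

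\smallskip

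The main obstacle is the bookkeeping of Step~2, in particular the integration constants involving the ratios $\check\lambda^l_\xi/\check\lambda^2_\xi$: one must verify that after the cancellations of Lemmas~\ref{lem:spec-corr-zeta-1}--\ref{lem:spec-corr-zeta-2}, these quantities are genuinely polynomial tensor expressions in $\xi$, so that the energy estimate for \eqref{e.zeta0n}--\eqref{e.zetajn} can be carried out cleanly. Everything else is routine: at each inductive step one loses only a geometric constant and a polynomial factor in $n$ or $m$, both of which are absorbed by enlarging $C$ once at the end of the argument.
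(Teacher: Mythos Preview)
Your proposal is correct and follows exactly the approach indicated by the paper, which itself offers no proof beyond the single sentence ``an iterative use of the Poincar\'e inequality on the unit cell $Q$ easily yields the following estimates.'' Your three-step induction (first $\psi^n,\bar\bb^n,\sigma^n,\rho^n$; then $\zeta^{n,m}$ via a nested induction on $m$ then $n$; finally $\tau^{n,m}$) is precisely the intended routine, and you correctly identify the only mildly nontrivial point, namely bounding the integration constants \eqref{eq:expec-zeta} via the lower bound $\check\lambda^2_\xi\ge\lambda|\xi|^2$ coming from the ellipticity of $\bar\Aa$.
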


\subsection{Spectral two-scale expansion}
Given a smooth function $\bar w$, we consider its two-scale expansion of order $\ell\ge 0$ associated with the above-defined spectral correctors, as motivated by spectral considerations in Section~\ref{sec:Bloch},
\begin{multline}\label{eq:2s-spec}
S_\e^\ell[\bar w,f]\,:=\,\sum_{n=0}^\ell \e^n  \psi^n(\tfrac\cdot\e)\odot  \gamma_{\ell}(\e\nabla)\nabla^n\bar w\\
+\e^3  \sum_{2m=0}^{\ell-3}(-1)^m\e^{2m} \sum_{n=0}^{\ell-3-2m} \e^n   \zeta^{n,m}(\tfrac\cdot\e)\odot  \gamma_{\ell} (\e \nabla)    \nabla^{n+1}\partial_t^{2m} f.
\end{multline}
We show by PDE techniques that this expansion is indeed well-adapted to describe the local behavior of the solution to the hyperbolic equation in the following sense: the heterogeneous hyperbolic operator applied to $S^\ell_\e[\bar w,f]$ is equivalent to a higher-order effective operator applied to $\bar w$, up to error terms of formal order~$O(\e^\ell)$. The proof is postponed to Section~\ref{sec:pr-prop:form-spectral}.

\begingroup\allowdisplaybreaks
\begin{prop}[Spectral two-scale expansion]\label{prop:form-spectral}
Let $\ell\ge1$, $\e>0$, and let $\bar w,f$ be smooth functions satisfying
\begin{equation}\label{eq:def-f-effab}
\partial_t^2\bar w -\nabla\cdot\Big(\bar\Aa+\sum_{k=2}^\ell\bar\bb^k\odot(\e\nabla)^{k-1}\Big)\nabla\bar w\,=\,f.
\end{equation}
Then, the associated spectral two-scale expansion of order $\ell$, defined in~\eqref{eq:2s-spec}, satisfies the following relation in the distributional sense in $\R\times\R^d$,
\begin{align*}
&(\partial_t^2-\nabla \cdot \Aa \nabla) S_\e^\ell[\bar w,f]
\,=\,f\\
&\qquad-\sum_{n=\ell}^{2\ell}\e^n\sum_{k=(n-\ell)\vee1}^{\ell\wedge(n-1)}(-1)^{n-k}\expec{\psi^{n-k}_{i_{k+1}\ldots i_{n}}\psi^k_{i_{1}\ldots i_k}}\gamma_\ell(\e\nabla)\nabla^{n}_{i_1\ldots i_{n}}f\\
&\qquad-\e^\ell\nabla\cdot\Big(\rho^\ell_{i_1\ldots i_\ell}(\tfrac\cdot\e)e_{i_\ell}\gamma_\ell(\e\nabla)\nabla^{\ell-1}_{i_1\ldots i_{\ell-1}}f\Big)\\
&\qquad-\e^\ell\nabla \cdot \Big((\Aa \psi^\ell_{i_1\ldots i_\ell}-\sigma^\ell_{i_1\ldots i_\ell})(\tfrac\cdot\e)\,\gamma_\ell(\e\nabla)\,\nabla\nabla^{\ell}_{i_1\ldots i_{\ell}}\bar w\Big)\\
&\qquad+\e^\ell(\psi^\ell+\rho^\ell)(\tfrac\cdot\e)\odot\gamma_\ell(\e\nabla) \nabla^\ell f
-\e^\ell\sigma^\ell_{i_1\ldots i_{\ell}}(\tfrac\cdot\e): \gamma_\ell(\e\nabla) \nabla^2\nabla^{\ell}_{i_1\ldots i_{\ell}} \bar w
\\
&\qquad+\sum_{n=1}^\ell \sum_{k=\ell+2-n}^{\ell+1}\e^{n+k-2} \psi^n_{i_1\ldots i_n}(\tfrac\cdot\e) \bar \bb^{k-1}_{i_{n+1}\ldots i_{n+k-2}}:\gamma_\ell(\e\nabla)\nabla^2\nabla_{i_1\ldots i_{n+k-2}}^{n+k-2}\bar w\\
&\qquad-\e^\ell\sum_{2m=0}^{\ell-2}(-1)^m\nabla\cdot\Big(\big(\Aa\zeta^{\ell-3-2m,m}_{i_1\ldots i_{\ell-2-2m}}-\tau^{\ell-3-2m,m}_{i_1\ldots i_{\ell-2-2m}}\big)(\tfrac\cdot\e)\gamma_{\ell}(\e\nabla)\nabla\nabla^{\ell-2-2m}_{i_1\ldots i_{\ell-2-2m}}\partial_t^{2m} f\Big)\\
&\qquad-\e^\ell\sum_{2m=0}^{\ell-2}(-1)^m\tau^{\ell-3-2m,m}_{i_1\ldots i_{\ell-2-2m}}(\tfrac\cdot\e):\gamma_{\ell}(\e\nabla)\nabla^2\nabla^{\ell-2-2m}_{i_1\ldots i_{\ell-2-2m}}\partial_t^{2m} f\\
&\qquad+\e^\ell\partial_t^2\sum_{2m=0}^{\ell-3}(-1)^m\zeta^{\ell-3-2m,m}(\tfrac\cdot\e)\odot\gamma_{\ell}(\e\nabla)\nabla^{\ell-2-2m}\partial_t^{2m} f.\qedhere
\end{align*}
\end{prop}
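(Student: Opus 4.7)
The strategy is a direct term-by-term computation of $(\partial_t^2-\nabla\cdot\Aa(\cdot/\e)\nabla)S_\e^\ell[\bar w,f]$, combined with the equation~\eqref{eq:def-f-effab} satisfied by $\bar w$. Since $\gamma_\ell(\e\nabla)$ is a Fourier multiplier, it commutes with both $\partial_t$ and $\nabla$ and can be transported passively through every line of the calculation as a right-side factor; I will suppress it in the sketch below and reinstate it at the end.

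First I apply $\partial_t^2$ to $S_\e^\ell[\bar w,f]$. Since the correctors $\psi^n$ and $\zeta^{n,m}$ do not depend on $t$, this only hits $\bar w$ and $f$. On the $\bar w$-part I substitute~\eqref{eq:def-f-effab} to replace $\partial_t^2\bar w$ by $f+\nabla\cdot(\bar\Aa+\sum_k\bar\bb^k\odot(\e\nabla)^{k-1})\nabla\bar w$; the $f$-contribution (combined with $\psi^0=1$) generates the $f$ on the right-hand side of the claimed identity, while the divergence contribution will combine by telescoping with the $\bar\bb^k$-terms produced by the elliptic operator below. On the $\zeta$-part of $S_\e^\ell$, a reindexing $m\mapsto m+1$ turns the extra $\partial_t^2$ into a source of the form $\zeta^{n,m}\odot\nabla^{n+1}\partial_t^{2m}f$, exactly the right-hand side of the inductive $\zeta$-equation~\eqref{e.zetajn}; this explains the alternating $(-1)^m$ structure in the ansatz.

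Next I apply $-\nabla\cdot\Aa(\cdot/\e)\nabla$ term by term. For each $\e^n\psi^n(\cdot/\e)\odot\nabla^n\bar w$, the Leibniz rule produces three contributions of formal orders $\e^{n-2}$, $\e^{n-1}$ and $\e^n$. Using the corrector equation~\eqref{e.spec-1} (read as a tensor identity via~\eqref{eq:decomp-check-rep}), the $\e^{n-2}$ contribution cancels the $\e^{n-1}$ cross-gradient and lower-order corrector contributions, while the $\bar\bb^k$-pieces telescope against those coming from $\partial_t^2\bar w$ via~\eqref{eq:def-f-effab}; the flux correctors $\sigma^n$ and the auxiliary correctors $\rho^n$ of Definition~\ref{def:spec-fluc-corr-psi} are then used to recast the remaining surface-type fluxes in divergence form plus pure-gradient remainders, matching the $\Aa\psi^\ell-\sigma^\ell$ and $\rho^\ell$ structure of the statement. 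The same mechanism, applied now with~\eqref{e.zeta0n}\,--\,\eqref{e.zetajn} and the flux correctors $\tau^{n,m}$, processes the $\zeta^{n,m}$ contributions. The delicate point is that the peculiar source $-\check\psi_\xi^{n+1}+\sum_k\expec{\overline{\check\psi^{n+1-k}_\xi}\check\psi^k_\xi}$ in~\eqref{e.zeta0n} is designed so as to absorb precisely the $O(\e^3)$ boundary residues left by the $\psi^n$-chain when $f$ is not well-prepared: the $\psi$-part of this source is re-expressed using $-\triangle\Psi^n=\psi^{n-1}$ (hence the appearance of $\rho^n$), while the bilinear part matches the expansion of $\gamma_\ell^{-1}$ by~\eqref{e.def:gamma}.

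After all these cancellations, the only surviving contributions are: the boundary residues at $n=\ell,\ell+1$ of the $\psi$-sum (giving the $\Aa\psi^\ell-\sigma^\ell$, $\rho^\ell$ and $\bar\bb^{k-1}\psi^n$ terms in the conclusion), the analogous boundary $n=\ell-3-2m$ of each $\zeta^{\cdot,m}$-sum (giving the $\Aa\zeta^{\ell-3-2m,m}-\tau^{\ell-3-2m,m}$ and residual $\partial_t^2\zeta^{\ell-3-2m,m}$ terms), and a higher-frequency band $n\in\{\ell,\ldots,2\ell\}$ originating from the truncation of $\gamma_\ell(\e\nabla)^{-1}=\expec{|\sum_{n\le\ell}\psi^n\odot(i\e\xi)^{\otimes n}|^2}$, which produces exactly the bilinear $\expec{\psi^{n-k}\psi^k}$-terms in the second line of the conclusion. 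The main obstacle is the combinatorial bookkeeping: every term generated by the Leibniz expansion must be identified either as cancelling via~\eqref{e.spec-1}\,--\,\eqref{e.spec-2}, as telescoping through the homogenized $\bar\bb^k$-operator, as matching an inductive $\zeta$-source through~\eqref{e.zeta0n}\,--\,\eqref{e.zetajn}, or as one of the listed error terms. Verifying this matching---most naturally by a double induction on $n$ and $m$, or equivalently by carefully tracking indices at fixed $\ell$ and invoking the vanishing-average identities~\eqref{eq:chocolate} and~\eqref{eq:chocolate-2}---is where the bulk of the work lies, but no new analytical input is needed beyond what has already been set up in Section~\ref{sec:def-corr-spec}.
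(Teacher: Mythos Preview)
Your proposal is correct and follows essentially the same approach as the paper: a direct Leibniz-rule computation of the heterogeneous wave operator acting on each term of $S_\e^\ell$, telescoping via the corrector equations~\eqref{e.spec-1}, \eqref{e.zeta0n}, \eqref{e.zetajn}, recasting flux residues through $\sigma^\ell$, $\tau^{n,m}$, $\rho^\ell$, and identifying the $\gamma_\ell^{-1}$ expansion with the bilinear $\expec{\psi^{n-k}\psi^k}$ band. The paper organizes the calculation slightly differently---it splits $S_\e^\ell=S_1^\ell[\bar w]+S_2^\ell[f]$, applies the full operator $(\partial_t^2-\nabla\cdot\Aa\nabla)$ separately to each piece, and only invokes~\eqref{eq:def-f-effab} at the very end when combining---whereas you substitute~\eqref{eq:def-f-effab} for $\partial_t^2\bar w$ at the outset; but this is purely a bookkeeping choice and the two computations are term-for-term equivalent.
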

\endgroup

\subsection{Well-posedness of homogenized equation}\label{sec:homog-eqn/spec}
As motivated in Proposition~\ref{prop:form-spectral}, cf.~\eqref{eq:def-f-effab},
we consider the following formal homogenized equation, for $\ell\ge1$,
\begin{gather}\label{eq:homog-lim-sp}
\left\{\begin{array}{ll}
\partial_t^2\bar U^\ell_\e-\nabla\cdot\big(\bar\Aa+\sum_{k=2}^\ell\bar\bb^k\odot(\e\nabla)^{k-1}\big)\nabla \bar U^\ell_\e = f,&\text{in $\R\times\R^d$},\\
\bar U_\e^\ell=f=0,&\text{for $t<0$}.
\end{array}\right.
\end{gather}
However, as explained in the introduction, the symbol of the operator
\begin{equation}\label{eq:diffop-sp}
-\nabla\cdot\Big(\bar\Aa+\sum_{k=2}^\ell\bar\bb^k\odot(\e\nabla)^{k-1}\Big)\nabla
\end{equation}
may vanish, so equation~\eqref{eq:homog-lim-sp} is ill-posed in general.
As described in Section~\ref{sec:discuss}, several high{er}-order modifications of this equation can then be used to ensure well-posedness: high-frequency filtering as in~\cite{ALR}, high{er}-order regularization as in~\cite{BG},
or the Boussinesq trick as in~\cite{Pouch-19}. Let us precisely define each of them:
\begingroup\allowdisplaybreaks
\begin{enumerate}[\hspace{-0.15cm}(I)]
\item \emph{High-frequency filtering}.\\
Let $\alpha\in(0,1)$, and let $\chi\in C^\infty_c(\R^d)$ be a cut-off function with
\[\chi|_{\frac12B}=1,\qquad\chi|_{\R^d\setminus B}=0.\]
Provided that $0<\e\ll_{\alpha}1$ is small enough, the Fourier symbol of the operator~\eqref{eq:diffop-sp} is strictly positive on $\e^{-\alpha} B$,
and we may then define $\bar u_\e^{\operatorname{(I)},\ell}$ as the unique solution in~$\R\times\R^d$ of
\begin{equation}\label{eq:proxy-baru-I0}
\partial_t^2\bar u_\e^{\operatorname{(I)},\ell}-\nabla\cdot\Big(\bar\Aa+\sum_{k=2}^\ell\bar\bb^{k}\odot(\e\nabla)^{n-1}\Big)\nabla\bar u_\e^{\operatorname{(I)},\ell}
= \chi(\e^\alpha\nabla)f,
\end{equation}
with $\bar u_\e^{\operatorname{(I)},\ell}=f=0$ for $t<0$, such that the spatial Fourier transform of $\bar u_\e^{\operatorname{(I)},\ell}$ is supported in $\R\times\e^{-\alpha}B$.
\smallskip\item \emph{High{er}-order regularization}.\\
Choose $\kappa_\ell\ge0$ as the smallest real number such that for all $\xi\in\R^d$,
\begin{equation}\label{eq:choice-kappaell}
\qquad\xi \cdot\Big(\bar\Aa+\sum_{k=2}^\ell \bar\bb^{k}\odot(i \xi)^{\otimes(k-1)}+\kappa_\ell |\xi|^{\ell}\Big)\xi \,\ge\, \tfrac12 \lambda |\xi|^2.
\end{equation}
Note that~\eqref{e.bd-coeff} entails $\kappa_\ell \le C^\ell$.
Then, for all $\e>0$, we can define $\bar u_\e^{\operatorname{(II)},\ell}$ as the unique solution in $\R\times\R^d$ of
\begin{equation}\label{eq:proxy-baru-II}
\qquad\quad
\partial_t^2\bar u_\e^{\operatorname{(II)},\ell}-\nabla\cdot\Big(\bar\Aa+\sum_{k=2}^\ell\bar\bb^{k}\odot(\e\nabla)^{k-1}+\kappa_\ell (\e|\nabla|)^{\ell}\Big)\nabla\bar u_\e^{\operatorname{(II)},\ell}
\,=\, f,
\end{equation}
with $\bar u_\e^{\operatorname{(II)},\ell}=f=0$ for $t<0$.
\smallskip\item \emph{Boussinesq trick}.\\
Set $\kappa_1=1$, $\kappa_{2j}=0$ for all $j$, and for $j>0$ we define inductively $\kappa_{2j+1}\ge0$ as the smallest value such that for all $\xi \in \R^d$,
\begin{equation}\label{eq:choice-kappaell-B}
\qquad\xi\cdot\Big(\kappa_{2j+1}\bar\Aa\,+\,\sum_{l=1}^{2j} \kappa_{l}\,\bar \bb^{2j+2-l}\odot(i\tfrac{\xi}{|\xi|})^{2j+1-l}\Big)\xi\,\ge\,0.
\end{equation}
Note that~\eqref{e.bd-coeff} entails $|\kappa_l| \le C^l$ for all $l$.
Then, for all $\e>0$, we can define $\bar u_\e^{\text{(III)},\ell}$ as the unique solution in~$\R\times\R^d$ of
\begin{multline}\label{eq:proxy-baru-III}
\quad\qquad\partial_t^2\Big(1+\sum_{l=2}^{\ell}  \kappa_l (\e |\nabla|)^{l-1}\Big)\bar u_\e^{\operatorname{(III)},\ell}\\
-\nabla\cdot\bigg(\sum_{n=1}^\ell \Big(
 \kappa_{n}\bar\Aa+ \sum_{l=1}^{n-1} \kappa_{l}\bar \bb^{n+1-l}\odot(\tfrac{\nabla}{|\nabla|})^{n-l}\Big)(\e|\nabla|)^{n-1}\bigg) \nabla\bar u_\e^{\operatorname{(III)},\ell}\\
\,=\, \Big(1+\sum_{l=2}^{\ell}    \kappa_l (\e |\nabla|)^{l-1}\Big)f,
\end{multline}
with $\bar u_\e^{\operatorname{(III)},\ell}=f=0$ for $t<0$.
\end{enumerate}
We analyze these three modifications of the formal homogenized equation~\eqref{eq:homog-lim-sp} and show that they are well-posed and all equivalent up to higher-order errors.
The proof is postponed to Section~\ref{sec:pr-lem:apriori-baru-sp}.
\endgroup

\begin{lem}[Well-posedness of effective equation]\label{lem:apriori-baru-sp}
Let $f\in C^\infty(\R;H^\infty(\R^d))$ and let $\ell\ge1$.
\begin{enumerate}[(i)]
\item If the spatial Fourier transform of $f$ is supported in $\R\times B_R$ for some $R\ge1$, and provided that $\e R\ll1$ is small enough (independently of~$\ell$), then the formal effective equation~\eqref{eq:homog-lim-sp} admits a unique ancient solution $\bar U_\e^\ell\in\Ld^\infty_\loc(\R;\Ld^2(\R^d))$ with spatial Fourier transform supported in $\R\times B_R$.
Moreover, it satisfies for all $r,t\ge0$,
\begin{eqnarray*}
\|\langle D\rangle^rD\bar U_\e^{\ell;t}\|_{\Ld^2(\R^d)}&\lesssim&\|\langle D\rangle^rf\|_{\Ld^1((0,t),\Ld^2(\R^d))},\\
\|\bar U_\e^{\ell;t}\|_{\Ld^2(\R^d)}&\lesssim&\langle t\rangle\|f\|_{\Ld^1((0,t);\Ld^2(\R^d))}.
\end{eqnarray*}
\item The modified equations~\eqref{eq:proxy-baru-I0}, \eqref{eq:proxy-baru-II}, and~\eqref{eq:proxy-baru-III} are well-posed in $\Ld^\infty_\loc(\R;\Ld^2(\R^d))$ in their respective sense, and their solutions satisfy for all $r\ge0$, for $(\star)=\operatorname{(I)}$ or $\operatorname{(II)}$,
\begin{eqnarray*}
\|\langle D\rangle^rD\bar u_\e^{{(\star)},\ell;t}\|_{\Ld^2(\R^d)}&\lesssim&\|\langle D\rangle^rf\|_{\Ld^1((0,t);\Ld^2(\R^d))},
\\
\|\bar u_\e^{(\star),\ell;t}\|_{\Ld^2(\R^d)}&\lesssim&\langle t\rangle\|f\|_{\Ld^1((0,t);\Ld^2(\R^d))},
\end{eqnarray*}
and for $(\star)=\operatorname{(III)}$,
\begin{eqnarray*}
\|\langle D\rangle^rD\bar u_\e^{\operatorname{(III)},\ell;t}\|_{\Ld^2(\R^d)}&\le& C^{\ell}\|\langle D\rangle^r\langle\e\nabla\rangle^{\lfloor\frac{\ell-1}2\rfloor}f\|_{\Ld^1((0,t);\Ld^2(\R^d))},\\
\|\bar u_\e^{\operatorname{(III)},\ell;t}\|_{\Ld^2(\R^d)}&\le & C^{\ell}\langle t\rangle\|\langle \e\nabla\rangle^{\lfloor\frac{\ell-1}2\rfloor}f\|_{\Ld^1((0,t);\Ld^2(\R^d))}.
\end{eqnarray*}
\item If the spatial Fourier transform of $f$ is supported in $\R\times B_R$ for some $R\ge1$, and provided that $\e R\ll1$ is small enough (independently of~$\ell$), we have for all $r,t\ge0$,
\begin{align*}
\|\langle D\rangle^r(\bar u_\e^{\operatorname{(I)},\ell;t}-\bar U_\e^{\ell;t})\|_{\Ld^2(\R^d)}&\,\le\,(\e C)^{\ell}\|\langle D\rangle^{K_{\alpha}\ell+r}f\|_{\Ld^1((0,t);\Ld^2(\R^d))},
\\
\|\langle D\rangle^r(\bar u_\e^{\operatorname{(II)},\ell;t}-\bar U_\e^{\ell;t})\|_{\Ld^2(\R^d)}&\,\le\,(\e C)^{\ell}\langle t\rangle   \|\langle D\rangle^{\ell+r}f\|_{\Ld^1((0,t);\Ld^2(\R^d))} ,
\\
\|\langle D\rangle^r(\bar u_\e^{\operatorname{(III)},\ell;t}-\bar U_\e^{\ell;t})\|_{\Ld^2(\R^d)}&\,\le\,(\e C)^{\ell}\langle t\rangle  \|\langle D \rangle^{2\ell+r-2}f\|_{\Ld^1((0,t);\Ld^2(\R^d)},
\end{align*}
with {$K_{\alpha}\le1/\alpha$} in the first estimate.
\qedhere
\end{enumerate}
\end{lem}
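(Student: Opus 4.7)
For part (i), I work in spatial Fourier at each frequency $\xi\in B_R$. By \eqref{e.bd-coeff} we have $|\bar\bb^k|\le C^k$, so provided $\e R\ll1$ small enough (depending only on $C$, hence independent of $\ell$), the geometric series $\sum_{k\ge 2}|\bar\bb^k|(\e R)^{k-1}$ is summable and arbitrarily small. The Fourier symbol $\xi\cdot\big(\bar\Aa+\sum_{k=2}^\ell\bar\bb^k\odot(i\e\xi)^{\otimes(k-1)}\big)\xi$ is then uniformly bounded from below by $\tfrac\lambda2|\xi|^2$ on $B_R$. On the Fourier support of $f$, the equation decouples into a family of second-order linear ODEs in time with uniformly positive symbol, whose unique ancient solution is given by Duhamel's formula. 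The two stated bounds follow from the standard energy estimate (for $\|D\bar U_\e^\ell\|_{\Ld^2}$) and a Duhamel-type integration (for $\|\bar U_\e^\ell\|_{\Ld^2}$).

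For part (ii), I handle each modification separately. In case (I), the source $\chi(\e^\alpha\nabla)f$ has Fourier support in $\e^{-\alpha}B$; since $\e^{1-\alpha}\to0$, the analysis of (i) applies verbatim with $R$ replaced by $\e^{-\alpha}$. In case (II), the choice \eqref{eq:choice-kappaell} of $\kappa_\ell$ ensures the spatial Fourier symbol is globally $\ge\tfrac\lambda2|\xi|^2$, so the standard energy estimate gives the result with no loss in $\ell$. In case (III), I first observe that the time operator $A_\e:=1+\sum_{l=2}^\ell\kappa_l(\e|\nabla|)^{l-1}$ is $\ge1$ uniformly, since $\kappa_{2j}=0$ and $\kappa_{2j+1}\ge0$ by construction. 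Next, combining Proposition~\ref{prop:vanish} (i.e.\ $\bar\bb^n=0$ for $n$ even) with $\kappa_{2j}=0$, all even-$n$ contributions to the spatial operator in \eqref{eq:proxy-baru-III} vanish, and the odd-$n$ contributions are non-negative by \eqref{eq:choice-kappaell-B}. The modified energy $E(t):=\tfrac12\langle A_\e\partial_tu,\partial_tu\rangle+\tfrac12\langle-B_\e u,u\rangle$ is non-negative and gives well-posedness; the extra factor $\langle\e\nabla\rangle^{\lfloor(\ell-1)/2\rfloor}$ in the statement arises because the natural energy controls $\|A_\e^{1/2}\partial_tu\|$, so $\|\partial_tu\|$ costs an operator $A_\e^{-1/2}$ whose gain when applied to the Duhamel source is bounded by the operator norm of $A_\e^{1/2}\sim\langle\e\nabla\rangle^{\lfloor(\ell-1)/2\rfloor}$.

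For part (iii), in each case I write the difference $w_\star:=\bar u_\e^{(\star),\ell}-\bar U_\e^\ell$ as a solution of the formal equation \eqref{eq:homog-lim-sp} with a small residual source $g_\star$, then apply the a priori estimate of (i) after noting that all operators involved are Fourier multipliers, so $g_\star$ inherits the Fourier support $B_R$ from $\hat f$ and so does $w_\star$. Explicitly, $g_{(\operatorname I)}=(\chi(\e^\alpha\nabla)-1)f$ is supported in $\{|\xi|>\tfrac12\e^{-\alpha}\}$, where $\langle\xi\rangle^{-k}\lesssim(2\e^\alpha)^k$, giving $\|g_{(\operatorname I)}\|_{\Ld^2}\lesssim\e^{\alpha k}\|\langle D\rangle^{k}f\|_{\Ld^2}$ for any $k\ge0$, and the choice $k=\lceil\ell/\alpha\rceil$ yields $K_\alpha\le1/\alpha$. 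The residual $g_{(\operatorname{II})}=-\kappa_\ell(\e|\nabla|)^\ell(-\triangle)\bar u_\e^{(\operatorname{II}),\ell}$ satisfies $\|\langle D\rangle^rg_{(\operatorname{II})}\|_{\Ld^2}\le C^\ell\e^\ell\|\langle D\rangle^{\ell+r+2}\bar u_\e^{(\operatorname{II}),\ell}\|_{\Ld^2}$, and (ii) converts this to the stated bound on $f$ (a sharper accounting using the elliptic regularity of $L$ absorbs the extra two derivatives). Finally, $g_{(\operatorname{III})}$ is obtained by formally comparing \eqref{eq:proxy-baru-III} to \eqref{eq:homog-lim-sp}; the algebraic definition of the $\kappa_l$'s in \eqref{eq:choice-kappaell-B} is precisely designed so that the coefficients of $(\e\nabla)^k$ cancel for $k\le\ell-1$, leaving an $O(\e^\ell)$ residual involving at most $O(\ell)$ additional derivatives of $f$ and $\bar u_\e^{(\operatorname{III}),\ell}$ (the latter controlled via (ii)).

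\textbf{Main obstacle.} The delicate point is case (III): verifying well-posedness requires the simultaneous algebraic cancellations $\kappa_{\text{even}}=0$ and $\bar\bb^{\text{even}}=0$ to secure positivity of the spatial operator, and obtaining the comparison with \eqref{eq:homog-lim-sp} to order $\e^\ell$ calls for a careful inductive bookkeeping of the Boussinesq rearrangement so as not to lose factors in $\ell$. The filtering (I) and regularization (II) cases are comparatively routine.
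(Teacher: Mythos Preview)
Your overall architecture matches the paper's: Fourier diagonalization plus Duhamel for~(i), the same three positivity arguments for~(ii), and residual-source analysis for~(iii). Two points deserve correction.

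\textbf{The missing $\dot H^{-1}$ estimate.} For~(iii) you propose to apply ``the a~priori estimate of~(i)'' to the difference $w_\star$, but the two estimates stated in~(i) are not enough: the $\Ld^2$-estimate carries a factor $\langle t\rangle$, and the energy estimate controls $Dw_\star$, not $w_\star$. The paper establishes in Step~1 an additional estimate (not part of the lemma's statement),
\[
\|\langle D\rangle^r \bar U_\e^{\ell;t}\|_{\Ld^2(\R^d)}\,\lesssim\,\|\langle D\rangle^r f\|_{\Ld^1((0,t);\dot H^{-1}(\R^d))},
\]
obtained by testing the Fourier equation against $(|\xi|^2+\delta)^{-1}\partial_t\overline{\F[\bar U_\e^\ell]}$. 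This is what lets you absorb the two extra derivatives in~(II) and~(III) (the residual sources are $\nabla\cdot(\ldots)$) and, for~(I), recover the time-uniform bound (the source $(\chi(\e^\alpha\nabla)-1)f$ has Fourier support in $|\xi|\ge\tfrac12\e^{-\alpha}$, so the $|\xi|^{-1}$ from $\dot H^{-1}$ costs nothing). Your phrase ``elliptic regularity of~$L$'' is not the right mechanism; it is this energy-type $\dot H^{-1}$ bound.

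\textbf{The role of the $\kappa_l$'s in~(III).} You write that ``the algebraic definition of the $\kappa_l$'s in~\eqref{eq:choice-kappaell-B} is precisely designed so that the coefficients of $(\e\nabla)^k$ cancel for $k\le\ell-1$.'' That is not correct: the $\kappa_l$'s are chosen purely for \emph{positivity} of the spatial symbol. The $O(\e^\ell)$ comparison between~\eqref{eq:proxy-baru-III} and~\eqref{eq:homog-lim-sp} is instead a purely algebraic identity, valid for \emph{any} coefficients $\kappa_l$: expanding the product $\big(1+\sum_{l=2}^\ell\kappa_l(\e|\nabla|)^{l-1}\big)\big(\bar\Aa+\sum_{k=2}^\ell\bar\bb^k\odot(\e\nabla)^{k-1}\big)$ and truncating at total order~$\ell$ recovers exactly the spatial operator in~\eqref{eq:proxy-baru-III}, with remainder $\sum_{n=\ell+1}^{2\ell-1}(\ldots)(\e|\nabla|)^{n-1}$. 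The paper then writes $w_{(\operatorname{III})}$ as solving the \emph{Boussinesq} equation (not the formal one, as you propose) with this divergence-form remainder applied to $\bar U_\e^\ell$ as source, and closes via the $\dot H^{-1}$ estimate and~(ii). Your route---writing $w_\star$ as solving the formal equation with source involving $\bar u_\e^{(\star),\ell}$---also works once the $\dot H^{-1}$ estimate is in hand, but note that the source then depends on $\bar u_\e^{(\star),\ell}$ rather than $\bar U_\e^\ell$, which is harmless here since both are controlled by~(ii).
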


\begingroup\allowdisplaybreaks
\subsection{Proof of Proposition~\ref{prop:form-spectral}}\label{sec:pr-prop:form-spectral}
By scaling, it suffices to consider the case $\e=1$ and we omit the subscript $\e=1$ for notational convenience. The two-scale expansion~\eqref{eq:2s-spec} can be decomposed as $S^\ell[\bar w,f]=S_1^{\ell}[\bar w]+S_2^{\ell}[f]$, in terms of
\begin{eqnarray*}
S_1^{\ell}[\bar w]&:=&\sum_{n=0}^\ell\psi^n \odot  \gamma_{\ell}(\nabla)\nabla^n\bar w,\\
S_2^{\ell}[f]&:=&\sum_{2m=0}^{\ell-3}\sum_{n=0}^{\ell-3-2m}(-1)^m\zeta^{n,m}\odot\gamma_{\ell}(\nabla)    \nabla^{n+1}\partial_t^{2m} f.
\end{eqnarray*}
We split the proof into three steps, separately deriving equations for each part.

\medskip
\step1 Equation for $S_1^{\ell}[\bar w]$: {we show that}
\begin{multline}\label{eq:rep-eqn-S1ell}
(\partial_t^2-\nabla \cdot \Aa \nabla) S_1^\ell[\bar w]
\,=\,\sum_{n=0}^\ell \psi^n\odot  \gamma_\ell(\nabla) \nabla^n\bigg(\partial_t^2\bar w-\nabla\cdot\Big(\bar\Aa+\sum_{k=2}^{\ell} \bar \bb^{k}\odot \nabla^{k-1}\Big)\nabla\bar w\bigg)\\
- \nabla \cdot \Big((\Aa \psi^\ell_{i_1\ldots i_\ell}-\sigma^\ell_{i_1\ldots i_\ell})\,\gamma_\ell(\nabla)\,\nabla\nabla^{\ell}_{i_1\ldots i_{\ell}}\bar w\Big)
-\sigma^\ell_{i_1\ldots i_{\ell}}: \gamma_\ell(\nabla) \nabla^2\nabla^{\ell}_{i_1\ldots i_{\ell}} \bar w
\\
+\sum_{n=1}^\ell \sum_{k=\ell+2-n}^{\ell+1} \psi^n_{i_1\ldots i_n} \bar \bb^{k-1}_{i_{n+1}\ldots i_{n+k-2}}:\gamma_\ell(\nabla)\nabla^2\nabla_{i_1\ldots i_{n+k-2}}^{n+k-2}\bar w.
\end{multline}
A direct calculation {based on the general formula $-\nabla\cdot \Aa\nabla(hg)=(-\nabla\cdot\Aa\nabla h)g-\nabla\cdot(ah)\cdot\nabla g-a\nabla h\cdot\nabla g-ha:\nabla^2g$} yields, for all $n\ge0$,
\begin{multline*}
-\nabla \cdot \Aa \nabla \big( \psi^n \odot \gamma_\ell(\nabla) \nabla^n \bar w\big)
\\
=\, (-\nabla \cdot \Aa \nabla \psi^n_{i_1\ldots i_n}) \gamma_\ell(\nabla) \nabla^n_{i_1\ldots i_n} \bar w
-\nabla \cdot (\Aa \psi^n_{i_1\ldots i_n} e_{i_{n+1}}) \gamma_\ell(\nabla) \nabla^{n+1}_{i_1\ldots i_{n+1}} \bar w
\\
-(\ee_{i_{n+1}}\cdot\Aa \nabla \psi^n_{i_1\ldots i_n}) \gamma_\ell(\nabla) \nabla^{n+1}_{i_1\ldots i_{n+1}} \bar w- (\ee_{i_{n+2}}\cdot\Aa \psi^n_{i_1\ldots i_n} e_{i_{n+1}})\gamma_\ell(\nabla) \nabla^{n+2}_{i_1\ldots i_{n+2}} \bar w.
\end{multline*}
Combined with the defining equation~\eqref{e.spec-1} for $\psi^n$, this entails
\begin{multline*}
-\nabla \cdot \Aa \nabla \big( \psi^n \odot \gamma_\ell(\nabla) \nabla^n \bar w\big)
\\
=\, \nabla\cdot(\Aa\psi^{n-1}_{i_1\ldots i_{n-1}}\ee_{i_n}) \gamma_\ell(\nabla) \nabla^n_{i_1\ldots i_n} \bar w
-\nabla \cdot (\Aa \psi^n_{i_1\ldots i_n} e_{i_{n+1}}) \gamma_\ell(\nabla) \nabla^{n+1}_{i_1\ldots i_{n+1}} \bar w\\
+(\ee_{i_n}\cdot\Aa\nabla\psi^{n-1}_{i_1\ldots i_{n-1}})\gamma_\ell(\nabla) \nabla^n_{i_1\ldots i_n} \bar w
-(\ee_{i_{n+1}}\cdot\Aa \nabla \psi^n_{i_1\ldots i_n}) \gamma_\ell(\nabla) \nabla^{n+1}_{i_1\ldots i_{n+1}} \bar w\\
+(\ee_{i_n}\cdot\Aa\psi^{n-2}_{i_1\ldots i_{n-2}}\ee_{i_{n-1}})\gamma_\ell(\nabla) \nabla^n_{i_1\ldots i_n} \bar w
-(\ee_{i_{n+2}}\cdot\Aa \psi^n_{i_1\ldots i_n} e_{i_{n+1}})\gamma_\ell(\nabla) \nabla^{n+2}_{i_1\ldots i_{n+2}} \bar w\\
-\sum_{k=2}^{n}(\ee_{i_{k}}\cdot\bar\bb^{k-1}_{i_1\ldots i_{k-2}}\ee_{i_{k-1}})\,\psi^{n-k}_{i_{k+1}\ldots i_n}\gamma_\ell(\nabla) \nabla^n_{i_1\ldots i_n} \bar w,
\end{multline*}
and thus, after summation over $0\le n\le\ell$, {taking into account the telescoping sum we obtain that}
\begin{multline*}
(\partial_t^2-\nabla \cdot \Aa \nabla) S_1^\ell[\bar w]\,=\,
\sum_{n=0}^{\ell} \psi^n_{i_1\ldots i_n} \gamma_\ell(\nabla)\nabla^n_{i_1\ldots i_n} \Big(\partial_t^2-\sum_{k=2}^{\ell-n} \bar \bb^{k-1}_{j_1\ldots j_{k-2}}:\nabla^2\nabla^{k-2}_{j_1\ldots j_{k-2}}\Big) \bar w
\\
- \nabla \cdot (\Aa \psi^\ell_{i_1\ldots i_\ell}\ee_{i_{\ell+1}}) \gamma_\ell(\nabla) \nabla^{\ell+1}_{i_1\ldots i_{\ell+1}}\bar w
- \ee_{i_{\ell+1}}\cdot\Aa(\nabla \psi^\ell_{i_1\ldots i_\ell}+\psi^{\ell-1}_{i_1\ldots i_{\ell-1}} e_{i_\ell})\gamma_\ell(\nabla) \nabla^{\ell+1}_{i_1\ldots i_{\ell+1}} \bar w\\
-(\ee_{i_{\ell+2}}\cdot\Aa \psi^\ell_{i_1\ldots i_{\ell}} e_{i_{\ell+1}}) \gamma_\ell(\nabla) \nabla^{\ell+2}_{i_1\ldots i_{\ell+2}} \bar w,
\end{multline*}
which we can rewrite as
\begin{multline*}
(\partial_t^2-\nabla \cdot \Aa \nabla) S_1^\ell[\bar w]\,=\,
\sum_{n=0}^{\ell} \psi^n_{i_1\ldots i_n} \gamma_\ell(\nabla)\nabla^n_{i_1\ldots i_n} \Big(\partial_t^2-\sum_{k=2}^{\ell+1-n}\bar \bb^{k-1}_{j_1\ldots j_{k-2}}:\nabla^2\nabla^{k-2}_{j_1\ldots j_{k-2}}\Big) \bar w
\\
- \nabla \cdot (\Aa \psi^\ell_{i_1\ldots i_\ell}\ee_{i_{\ell+1}}) \gamma_\ell(\nabla) \nabla^{\ell+1}_{i_1\ldots i_{\ell+1}}\bar w -(\ee_{i_{\ell+2}}\cdot\Aa \psi^\ell_{i_1\ldots i_{\ell}} e_{i_{\ell+1}}) \gamma_\ell(\nabla) \nabla^{\ell+2}_{i_1\ldots i_{\ell+2}} \bar w\\
- \Big(\ee_{i_{\ell+1}}\cdot\Aa(\nabla \psi^\ell_{i_1\ldots i_\ell}+\psi^{\ell-1}_{i_1\ldots i_{\ell-1}} e_{i_\ell}\big)-\sum_{k=2}^{\ell+1}(\ee_{i_{k}}\cdot\bar\bb^{k-1}_{i_1\ldots i_{k-2}}\ee_{i_{k-1}})\psi^{\ell+1-k}_{i_{k+1}\ldots i_{\ell+1}} \Big) \gamma_\ell(\nabla)\nabla^{\ell+1}_{i_1\ldots i_{\ell+1}} \bar w.
\end{multline*}
Recalling the definition of flux correctors, cf.~Definition~\ref{def:spec-fluc-corr-psi}, this means
\begin{multline*}
(\partial_t^2-\nabla \cdot \Aa \nabla) S_1^\ell[\bar w]\,=\,
\sum_{n=0}^\ell \psi^n_{i_1\ldots i_n} \gamma_\ell(\nabla)\nabla^n_{i_1\ldots i_n}\Big(\partial_t^2-\sum_{k=2}^{\ell+1-n} \bar \bb^{k-1}_{j_1\ldots j_{k-2}}:\nabla^2\nabla^{k-2}_{j_1\ldots j_{k-2}}\Big) \bar w\\
- \nabla \cdot \big((\Aa \psi^\ell_{i_1\ldots i_\ell}-\sigma^\ell_{i_1\ldots i_\ell})\ee_{i_{\ell+1}}\big) \gamma_\ell(\nabla) \nabla^{\ell+1}_{i_1\ldots i_{\ell+1}}\bar w
\\
-(\ee_{i_{\ell+2}}\cdot\Aa \psi^\ell_{i_1\ldots i_{\ell}} e_{i_{\ell+1}}) \gamma_\ell(\nabla) \nabla^{\ell+2}_{i_1\ldots i_{\ell+2}} \bar w,
\end{multline*}
and the claim~\eqref{eq:rep-eqn-S1ell} follows.

\medskip
\step 2 Equation for $S_2^{\ell}[f]$: {we show that}
\begin{multline}\label{eq:rep-eqn-S2ell}
(\partial_t^2-\nabla\cdot\Aa\nabla) S_2^\ell[f]
\,=\,-\sum_{n=1}^{\ell-2}\psi^{n}\odot\gamma_{\ell}(\nabla)\nabla^{n} f\\
-\sum_{2m=0}^{\ell-2}(-1)^m\nabla\cdot\Big(\big(\Aa\zeta^{\ell-3-2m,m}_{i_1\ldots i_{\ell-2-2m}}-\tau^{\ell-3-2m,m}_{i_1\ldots i_{\ell-2-2m}}\big)\gamma_{\ell}(\nabla)\nabla\nabla^{\ell-2-2m}_{i_1\ldots i_{\ell-2-2m}}\partial_t^{2m} f\Big)\\
-\sum_{2m=0}^{\ell-2}(-1)^m\tau^{\ell-3-2m,m}_{i_1\ldots i_{\ell-2-2m}}:\gamma_{\ell}(\nabla)\nabla^2\nabla^{\ell-2-2m}_{i_1\ldots i_{\ell-2-2m}}\partial_t^{2m} f\\
+\sum_{n=0}^{\ell-2}\sum_{k=0}^{n+1}(-1)^{n+1-k}\expec{\psi^{n+1-k}_{i_{k+1}\ldots i_{n+1}}\psi^k_{i_{1}\ldots i_k}}\gamma_{\ell}(\nabla)\nabla^{n+1}_{i_1\ldots i_{n+1}} f\\
+\partial_t^2\sum_{2m=0}^{\ell-3}(-1)^m\zeta^{\ell-3-2m,m}\odot\gamma_{\ell}(\nabla)\nabla^{\ell-2-2m}\partial_t^{2m} f.
\end{multline}
 A direct calculation yields for all $m,n$,
\begin{multline*}
-\nabla\cdot\Aa\nabla\big(\zeta^{n,m} \odot  \gamma_{\ell}(\nabla)\nabla^{n+1}\partial_t^{2m} f\big)\\
\,=\,(-\nabla\cdot\Aa\nabla\zeta^{n,m}_{i_1\ldots i_{n+1}})  \gamma_{\ell}(\nabla)\nabla^{n+1}_{i_1\ldots i_{n+1}}\partial_t^{2m} f
-\nabla\cdot(\Aa\zeta^{n,m}_{i_1\ldots i_{n+1}}\ee_{i_{n+2}})  \gamma_{\ell}(\nabla)\nabla^{n+2}_{i_1\ldots i_{n+2}}\partial_t^{2m} f\\
-(\ee_{i_{n+2}}\cdot\Aa\nabla\zeta^{n,m}_{i_1\ldots i_{n+1}})  \gamma_{\ell}(\nabla)\nabla^{n+2}_{i_1\ldots i_{n+2}}\partial_t^{2m} f
-(\ee_{i_{n+3}}\cdot\Aa\zeta^{n,m}_{i_1\ldots i_{n+1}}\ee_{i_{n+2}})  \gamma_{\ell}(\nabla)\nabla^{n+3}_{i_1\ldots i_{n+3}}\partial_t^{2m} f.
\end{multline*}
For $m=0$, inserting the defining equation for $\zeta^{n,0}$, cf.~\eqref{e.zeta0n}, this entails
\begin{multline*}
-\nabla\cdot\Aa\nabla\big(\zeta^{n,0} \odot  \gamma_{\ell}(\nabla)\nabla^{n+1} f\big)\\
\,=\,\nabla\cdot\big(\Aa\zeta^{n-1,0}_{i_1\ldots i_n}\ee_{i_{n+1}}\big)\gamma_{\ell}(\nabla)\nabla^{n+1}_{i_1\ldots i_{n+1}} f
-\nabla\cdot\big(\Aa\zeta^{n,0}_{i_1\ldots i_{n+1}}\ee_{i_{n+2}}\big)\gamma_{\ell}(\nabla)\nabla^{n+2}_{i_1\ldots i_{n+2}} f\\
+\big(\ee_{i_{n+1}}\cdot\Aa\nabla\zeta^{n-1,0}_{i_1\ldots i_n}\big)\gamma_{\ell}(\nabla)\nabla^{n+1}_{i_1\ldots i_{n+1}} f
-\big(\ee_{i_{n+2}}\cdot\Aa\nabla\zeta^{n,0}_{i_1\ldots i_{n+1}}\big)  \gamma_{\ell}(\nabla)\nabla^{n+2}_{i_1\ldots i_{n+2}} f\\
+\big(\ee_{i_{n+1}}\cdot\Aa\zeta^{n-2,0}_{i_1\ldots i_{n-1}}\ee_{i_n}\big)\gamma_{\ell}(\nabla)\nabla^{n+1}_{i_1\ldots i_{n+1}} f
-\big(\ee_{i_{n+3}}\cdot\Aa\zeta^{n,0}_{i_1\ldots i_{n+1}}\ee_{i_{n+2}}\big)\gamma_{\ell}(\nabla)\nabla^{n+3}_{i_1\ldots i_{n+3}} f\\
-\psi^{n+1}_{i_1\ldots i_{n+1}}\gamma_{\ell}(\nabla)\nabla^{n+1}_{i_1\ldots i_{n+1}} f
+\sum_{k=0}^{n+1}(-1)^{n+1-k}\,\expec{\psi^{n+1-k}_{i_{k+1}\ldots i_{n+1}}\psi^k_{i_{1}\ldots i_k}}\gamma_{\ell}(\nabla)\nabla^{n+1}_{i_1\ldots i_{n+1}} f,
\end{multline*}
and thus, after summation over $0\le n\le\ell-3$,
\begin{multline*}
-\nabla\cdot\Aa\nabla\sum_{n=0}^{\ell-3}\zeta^{n,0} \odot  \gamma_{\ell}(\nabla)\nabla^{n+1} f
\,=\,-\nabla\cdot\big(\Aa\zeta^{\ell-3,0}_{i_1\ldots i_{\ell-2}}\ee_{i_{\ell-1}}\big)\gamma_{\ell}(\nabla)\nabla^{\ell-1}_{i_1\ldots i_{\ell-1}} f\\
-\ee_{i_{\ell-1}}\cdot\Aa\big(\nabla\zeta^{\ell-3,0}_{i_1\ldots i_{\ell-2}}+\zeta^{\ell-4,0}_{i_1\ldots i_{\ell-3}}\ee_{i_{\ell-2}}\big)\gamma_{\ell}(\nabla)\nabla^{\ell-1}_{i_1\ldots i_{\ell-1}} f\\
-\big(\ee_{i_{\ell}}\cdot\Aa\zeta^{\ell-3,0}_{i_1\ldots i_{\ell-2}}\ee_{i_{\ell-1}}\big)\gamma_{\ell}(\nabla)\nabla^{\ell}_{i_1\ldots i_{\ell}} f\\
-\sum_{n=1}^{\ell-2}\psi^{n}_{i_1\ldots i_{n}}\gamma_{\ell}(\nabla)\nabla^{n}_{i_1\ldots i_{n}} f
+\sum_{n=0}^{\ell-3}\sum_{k=0}^{n+1}(-1)^{n+1-k}\,\expec{\psi^{n+1-k}_{i_{k+1}\ldots i_{n+1}}\psi^k_{i_{1}\ldots i_k}}\gamma_{\ell}(\nabla)\nabla^{n+1}_{i_1\ldots i_{n+1}} f.
\end{multline*}
Proceeding similarly for $m>0$, we find
\begin{multline*}
-\nabla\cdot\Aa\nabla\sum_{n=0}^{\ell-3-2m}\zeta^{n,m} \odot  \gamma_{\ell}(\nabla)\nabla^{n+1}\partial_t^{2m} f\\
\,=\,
-\nabla\cdot\big(\Aa\zeta^{\ell-3-2m,m}_{i_1\ldots i_{\ell-2-2m}}\ee_{i_{\ell-1-2m}}\big)\gamma_{\ell}(\nabla)\nabla^{\ell-1-2m}_{i_1\ldots i_{\ell-1-2m}}\partial_t^{2m} f\\
-\ee_{i_{\ell-1-2m}}\cdot\Aa\big(\nabla\zeta^{\ell-3-2m,m}_{i_1\ldots i_{\ell-2-2m}}+\zeta^{\ell-4-2m,m}_{i_1\ldots i_{\ell-3-2m}}\ee_{i_{\ell-2-2m}}\big)\gamma_{\ell}(\nabla)\nabla^{\ell-1-2m}_{i_1\ldots i_{\ell-1-2m}}\partial_t^{2m} f\\
-\big(\ee_{i_{\ell-2m}}\cdot\Aa\zeta^{\ell-3-2m,m}_{i_1\ldots i_{\ell-2-2m}}\ee_{i_{\ell-1-2m}}\big)\gamma_{\ell}(\nabla)\nabla^{\ell-2m}_{i_1\ldots i_{\ell-2m}}\partial_t^{2m} f\\
+\sum_{n=0}^{\ell-3-2m}\zeta^{n,m-1}_{i_1\ldots i_{n+1}}\gamma_{\ell}(\nabla)\nabla^{n+1}_{i_1\ldots i_{n+1}}\partial_t^{2m} f.
\end{multline*}
Combining the above two identities, we are led to
\begin{multline*}
-\nabla\cdot\Aa\nabla S_2^\ell[f]
\,=\,-\sum_{2m=0}^{\ell-3}(-1)^m\nabla\cdot\big(\Aa\zeta^{\ell-3-2m,m}_{i_1\ldots i_{\ell-2-2m}}\ee_{i_{\ell-1-2m}}\big)\gamma_{\ell}(\nabla)\nabla^{\ell-1-2m}_{i_1\ldots i_{\ell-1-2m}}\partial_t^{2m} f\\
-\sum_{2m=0}^{\ell-3}(-1)^m\ee_{i_{\ell-1-2m}}\cdot\Aa\big(\nabla\zeta^{\ell-3-2m,m}_{i_1\ldots i_{\ell-2-2m}}+\zeta^{\ell-4-2m,m}_{i_1\ldots i_{\ell-3-2m}}\ee_{i_{\ell-2-2m}}\big)\gamma_{\ell}(\nabla)\nabla^{\ell-1-2m}_{i_1\ldots i_{\ell-1-2m}}\partial_t^{2m} f\\
-\sum_{2m=0}^{\ell-3}(-1)^m\big(\ee_{i_{\ell-2m}}\cdot\Aa\zeta^{\ell-3-2m,m}_{i_1\ldots i_{\ell-2-2m}}\ee_{i_{\ell-1-2m}}\big)\gamma_{\ell}(\nabla)\nabla^{\ell-2m}_{i_1\ldots i_{\ell-2m}}\partial_t^{2m} f\\
-\sum_{n=1}^{\ell-2}\psi^{n}_{i_1\ldots i_{n}}\gamma_{\ell}(\nabla)\nabla^{n}_{i_1\ldots i_{n}} f
+\sum_{n=0}^{\ell-3}\sum_{k=0}^{n+1}(-1)^{n+1-k}\,\expec{\psi^{n+1-k}_{i_{k+1}\ldots i_{n+1}}\psi^k_{i_{1}\ldots i_k}}\gamma_{\ell}(\nabla)\nabla^{n+1}_{i_1\ldots i_{n+1}} f\\
+\sum_{2m=2}^{\ell-3}(-1)^m\sum_{n=0}^{\ell-3-2m}\zeta^{n,m-1}_{i_1\ldots i_{n+1}}\gamma_{\ell}(\nabla)\nabla^{n+1}_{i_1\ldots i_{n+1}}\partial_t^{2m} f.
\end{multline*}
Recalling the definition of flux correctors, cf.~Definition~\ref{def:spec-fluc-corr-zeta}, as well as~\eqref{eq:chocolate} and~\eqref{eq:chocolate-2}, and reorganizing the terms, the claim~\eqref{eq:rep-eqn-S2ell} follows. Note that the flux correctors $\tau^{n,m}$'s are nontrivial even for $n=-1$, but those appear only in the case when $\ell-3$ is odd.
 
\medskip
\step3 Conclusion.\\
Combining the results of the last two steps, reorganizing the terms, and recalling the relation~\eqref{eq:def-f-effab} between $\bar w,f$, we are led to
\begin{multline}\label{eq:almost-decomp-eqn-S}
(\partial_t^2-\nabla \cdot \Aa \nabla) S^\ell[\bar w,f]
\,=\,f\\
-\bigg(\gamma_\ell(\nabla)^{-1}-1-\sum_{n=0}^{\ell-2}\sum_{k=0}^{n+1}(-1)^{n+1-k}\expec{\psi^{n+1-k}_{i_{k+1}\ldots i_{n+1}}\psi^k_{i_{1}\ldots i_k}}\nabla^{n+1}_{i_1\ldots i_{n+1}}\bigg)\gamma_{\ell}(\nabla)f\\
- \nabla \cdot \Big((\Aa \psi^\ell_{i_1\ldots i_\ell}-\sigma^\ell_{i_1\ldots i_\ell})\,\gamma_\ell(\nabla)\,\nabla\nabla^{\ell}_{i_1\ldots i_{\ell}}\bar w\Big)\\
+\sum_{n=(\ell-1)\vee1}^\ell \psi^n_{i_1\ldots i_n}  \gamma_\ell(\nabla) \nabla^n_{i_1\ldots i_n}f-\sigma^\ell_{i_1\ldots i_{\ell}}: \gamma_\ell(\nabla) \nabla^2\nabla^{\ell}_{i_1\ldots i_{\ell}} \bar w
\\
+\sum_{n=1}^\ell \sum_{k=\ell+2-n}^{\ell+1} \psi^n_{i_1\ldots i_n} \bar \bb^{k-1}_{i_{n+1}\ldots i_{n+k-2}}:\gamma_\ell(\nabla)\nabla^2\nabla_{i_1\ldots i_{n+k-2}}^{n+k-2}\bar w\\
-\sum_{2m=0}^{\ell-2}(-1)^m\nabla\cdot\Big(\big(\Aa\zeta^{\ell-3-2m,m}_{i_1\ldots i_{\ell-2-2m}}-\tau^{\ell-3-2m,m}_{i_1\ldots i_{\ell-2-2m}}\big)\gamma_{\ell}(\nabla)\nabla\nabla^{\ell-2-2m}_{i_1\ldots i_{\ell-2-2m}}\partial_t^{2m} f\Big)\\
-\sum_{2m=0}^{\ell-2}(-1)^m\tau^{\ell-3-2m,m}_{i_1\ldots i_{\ell-2-2m}}:\gamma_{\ell}(\nabla)\nabla^2\nabla^{\ell-2-2m}_{i_1\ldots i_{\ell-2-2m}}\partial_t^{2m} f\\
+\partial_t^2\sum_{2m=0}^{\ell-3}(-1)^m\zeta^{\ell-3-2m,m}\odot\gamma_{\ell}(\nabla)\nabla^{\ell-2-2m}\partial_t^{2m} f,
\end{multline}
and it remains to reformulate the second and fourth right-hand side terms.
For the second right-hand side term, we recall the definition~\eqref{e.def:gamma} of $\gamma_\ell$, which yields
\begin{eqnarray*}
\gamma_\ell(\xi)^{-1}-1-\sum_{n=0}^{\ell-2}\sum_{k=0}^{n+1}\expec{\overline{\check\psi_\xi^{n+1-k}}\check\psi_\xi^k}
&=&\E\bigg[{\Big|\sum_{n=0}^{\ell} \check\psi_\xi^n\Big|^2}\bigg]-1-\sum_{n=0}^{\ell-2}\sum_{k=0}^{n+1}\expec{\overline{\check\psi_\xi^{n+1-k}}\check\psi_\xi^k}\\
&=&\sum_{n=\ell}^{2\ell}\sum_{k=n-\ell}^{\ell}\expec{\overline{\check\psi_\xi^{n-k}}\check\psi_\xi^k}.
\end{eqnarray*}
For the fourth right-hand side term in~\eqref{eq:almost-decomp-eqn-S}, we use the auxiliary correctors of Definition~\ref{def:spec-fluc-corr-psi} to write for $\ell\ge2$,
\[\psi^{\ell-1}_{i_1\ldots i_{\ell-1}}  \gamma_\ell(\nabla) \nabla^{\ell-1}_{i_1\ldots i_{\ell-1}}f\,=\,
\rho^{\ell}_{i_1\ldots i_{\ell}}\gamma_\ell(\nabla) \nabla^{\ell}_{i_1\ldots i_{\ell}}f
-\nabla\cdot\Big(\rho^{\ell}_{i_1\ldots i_{\ell}}e_{i_\ell}\gamma_\ell(\nabla) \nabla^{\ell-1}_{i_1\ldots i_{\ell-1}}f\Big).\]
Combining these identities yields the conclusion.\qed

\subsection{Proof of Lemma~\ref{lem:apriori-baru-sp}}\label{sec:pr-lem:apriori-baru-sp}
We split the proof into five steps.

\medskip
\step1 Proof of~(i): well-posedness provided $\supp\hat f\subset \R\times B_R$.\\
In Fourier space, the operator $-\nabla\cdot(\bar\Aa+\sum_{k=2}^\ell \bar\bb^{k}\odot(\e\nabla)^{k-1})\nabla$ has symbol
\begin{equation*}
	\mu^{\ell}_{\e}(\xi)\,:=\,\xi\cdot \Big(\bar\Aa+\sum_{k=2}^{\ell}\bar{\bb}^k\odot(i\e \xi)^{\otimes (k-1)}\Big)\xi.
\end{equation*}
By Proposition~\ref{prop:vanish}, we know that $\mu^{\ell}_{\e}$ is real-valued. Recalling that the uniform ellipticity condition~\eqref{eq:unif-ell} entails
$\lambda|\xi|^2\le \xi\cdot\bar\Aa\xi\le|\xi|^2$ after homogenization,
and taking advantage of~\eqref{e.bd-coeff}, we find for $|\xi|\leq R$,
\begin{equation*}
	\mu^{\ell}_{\e}(\xi)\,\ge\, |\xi|^2\Big(\lambda-\sum_{k=2}^{\ell}(\e C|\xi|)^{k-1}\Big)
	\,\ge\, |\xi|^2\Big(\lambda-\sum_{k=2}^{\ell}(\e CR)^{k-1}\Big).
\end{equation*}
Provided that $\e R\ll1$ is small enough, we deduce for $|\xi|\le R$,
\begin{equation}\label{eq:coercivityinFourier}
	\tfrac{1}{C}|\xi|^2\,\le\,\mu^{\ell}_{\e}(\xi)\,\le\, C|\xi|^2.
\end{equation}
We may thus define a solution of~\eqref{eq:homog-lim-sp} in Fourier space via Duhamel's formula, that is,
\begin{equation*}
	\F[\bar U_\e^{\ell;t}](\xi)\,:=\,\int_{0}^t \frac{\sin\big((t-s)\mu^{\ell}_{\e}(\xi)^{1/2}\big)}{\mu^{\ell}_{\e}(\xi)^{1/2}}\hat f^s(\xi)\,ds,
\end{equation*}
where $\F g=\hat g$ stands for {the} spatial Fourier transform.
This formula indeed satisfies in $\R\times\R^d$,
\begin{equation}\label{eq:homog-eqn-Fourier-(i)}
	\partial_t^2\F[\bar U_\e^{\ell}]+\mu^{\ell}_{\e}\F[\bar U_\e^{\ell}]\,=\,\hat f,
\end{equation}
and thus, upon inverse Fourier transformation, this provides a weak solution $\bar U_\e^\ell$ of~\eqref{eq:homog-lim-sp} in $\Ld^\infty_\loc(\R;\Ld^2(\R^d))$. In addition, by construction, the spatial Fourier transform is supported in $\R\times B_R$ as $\hat f$ is.

\medskip\noindent
We turn to the proof of a priori estimates. As the equation is linear and has constant coefficients, derivatives of the solution satisfy the same equation up to replacing $f$ by its corresponding derivatives. It is therefore enough 
to prove the stated estimates with $r=0$,
\begin{eqnarray}
\|D\bar U_\e^{\ell;t}\|_{\Ld^2(\R^d)}&\lesssim&\|f\|_{\Ld^1((0,t),\Ld^2(\R^d))},\label{eq:DU-apriori}\\
\|\bar U_\e^{\ell;t}\|_{\Ld^2(\R^d)}&\lesssim&\langle t\rangle\|f\|_{\Ld^1((0,t);\Ld^2(\R^d))}.\label{eq:U-apriori}
\end{eqnarray}
Moreover, we note that the $\Ld^2$-estimate~\eqref{eq:U-apriori} directly follows from~\eqref{eq:DU-apriori}: recalling that $D=(\partial_t,\nabla)$, we indeed get from~\eqref{eq:DU-apriori} and integration that
\begin{equation*}
\|\bar U_{\e}^{\ell;t}\|_{\Ld^2(\R^d)}\,\le\, \int_{0}^t\|\partial_t \bar U_{\e}^{\ell}\|_{\Ld^2(\R^d)}\,\lesssim\, \int_{0}^t\|f\|_{\Ld^1((0,s);\Ld^2(\R^d))}\,ds\,\le\, \langle t\rangle \|f\|_{\Ld^1((0,t);\Ld^2(\R^d))},
\end{equation*}
that is,~\eqref{eq:U-apriori}.
It remains to establish~\eqref{eq:DU-apriori}. For that purpose, multiplying both sides of equation~\eqref{eq:homog-eqn-Fourier-(i)} by {the complex conjugate of} $\partial_t\F[\bar U_\e^\ell]$ {and taking the real part}, we get
\begin{equation*}
	\tfrac{1}{2}\partial_t\int_{\R^d}\Big(|\partial_t\F[\bar U_\e^\ell]|^2+\mu^{\ell}_{\e}|\F[\bar U_\e^\ell]|^2\Big)\,\le\, \|\hat f^t\|_{\Ld^2(\R^d)}\|\partial_t\F[\bar U_\e^\ell]\|_{\Ld^2(\R^d)},
\end{equation*}
which implies
\begin{equation*}
	\partial_t\left(\int_{\R^d}\Big(|\partial_t\F[\bar U_\e^\ell]|^2+\mu^{\ell}_{\e}|\F[\bar U_\e^\ell]|^2\Big)\right)^{\frac{1}{2}}\,\lesssim\, \|f^t\|_{\Ld^2(\R^d)}.
\end{equation*}
Integrating in time and appealing to~\eqref{eq:coercivityinFourier}, this yields the claim~\eqref{eq:DU-apriori}.
Note that uniqueness follows from these a priori estimates by linearity.

\medskip\noindent
For Step 5, we shall also need an a priori estimate for $\bar U_\e^\ell$ in terms of the \mbox{$\dot{H}^{-1}$-norm} of the impulse. Multiplying~\eqref{eq:homog-eqn-Fourier-(i)} with the complex conjugate of $(|\cdot|^2+\delta)^{-1}\partial_t\F[\bar U_\e^\ell]$ and repeating the above argument, we infer that
\begin{equation*}
\partial_t\bigg(\int_{\R^d}(|\xi|^2+\delta)^{-1}\mu_{\e}^{\ell}(\xi)\,|\F[\bar U_\e^\ell](\xi)|^2\,d\xi\bigg)^{\frac{1}{2}}\,\le\, \bigg(\int_{\R^d}(|\xi|^2+\delta)^{-1}|\hat{f}^t(\xi)|^2\,d\xi\bigg)^{\frac{1}{2}}.
\end{equation*}
{Integrating in time, using the monotone convergence theorem to pass to the limit $\delta\downarrow0$, and appealing again to~\eqref{eq:coercivityinFourier}, we deduce
\begin{equation*}
\|\bar U_\e^{\ell;t}\|_{\Ld^2(\R^d)}\,\lesssim\,\|f\|_{\Ld^1((0,t);\dot{H}^{-1}(\R^d))},
\end{equation*}
and} similarly, due to the constant coefficients and linearity of the equation, we get for all~$r\geq 0$,
\begin{equation}\label{eq:negativeNorm}
\|\langle D\rangle^r\bar U_\e^{\ell;t}\|_{\Ld^2(\R^d)}\lesssim \|\langle D^r\rangle f\|_{\Ld^1((0,t);\dot{H}^{-1}(\R^d))}.
\end{equation}

\step2 Proof of~(ii) for high-frequency filtering.\\
Let $\alpha,\chi$ be fixed. We appeal to~\eqref{eq:coercivityinFourier} with $R=\e^{-\alpha}$: provided that $\e^{1-\alpha}\ll1$ is small enough, we deduce for $|\xi|\le\e^{-\alpha}$,
\begin{equation}\label{eq:coercivitybyfiltering}
\tfrac{1}{C}|\xi|^2\leq\mu^{\ell}_{\e}(\xi)\leq C|\xi|^2.
\end{equation}
Hence, as in Step~1, replacing $f$ by $\chi(\e^{\alpha}\nabla)f$, there is a unique solution $u_{\e}^{\text{(I)},\ell}$ of~\eqref{eq:proxy-baru-I0} in~$\Ld^\infty_\loc(\R;\Ld^2(\R^d))$ with spatial Fourier transform supported in $\R\times\e^{-\alpha}B$, and the claimed a priori estimates similarly follow.

\medskip
\step3 Proof of~(ii) for high{er}-order regularization.\\
In Fourier space, the regularized operator $-\nabla\cdot\big(\bar\Aa+\sum_{k=2}^\ell\bar\bb^{k}\odot(\e\nabla)^{k-1}+\kappa_\ell(\e|\nabla|)^\ell\big)\nabla$
has symbol
\begin{equation*}
\mu^{\operatorname{(II)},\ell}_{\e}(\xi)\,:=\,\xi\cdot\Big(\bar\Aa+\sum_{k=2}^{\ell}\bar{\bb}^k\odot(i\e \xi)^{\otimes k-1}+\kappa_\ell(\e|\xi|)^\ell\Big)\xi.
\end{equation*}
Recall that by Proposition~\ref{prop:vanish} this symbol is real-valued.
Moreover, the lower bound $\xi\cdot\bar\Aa\xi\ge\lambda|\xi|^2$ ensures that $\kappa_\ell$ can indeed be chosen as the smallest value satisfying~\eqref{eq:choice-kappaell}, while the bound~\eqref{e.bd-coeff} entails $\kappa_\ell\le C^\ell$.
This choice of $\kappa_\ell$, together with~\eqref{e.bd-coeff}, yields
\begin{equation*}
\tfrac12 \lambda|\xi|^2 \,\le\, \mu^{\operatorname{(II)},\ell}_{\e}(\xi)\,\le\, C^\ell|\xi|^2\langle \e \xi\rangle^\ell.
\end{equation*}
We can then solve~\eqref{eq:proxy-baru-II} in Fourier space again via Duhamel's formula, and the stated a priori estimates are deduced as in Step~1 using the above coercivity of the regularized symbol. Uniqueness follows by linearity.

\medskip
\step4 Proof of~(ii) for Boussinesq trick.\\
In terms of the symbol
\begin{equation}\label{eq:defin-symbol-III}
\mu_{\e}^{\operatorname{(III)},\ell}(\xi)\,:=\,\frac{\xi \cdot \Big(\sum_{n=1}^\ell 
\big(\kappa_n\bar\Aa+ \sum_{l=1}^{n-1}\kappa_{l} \bar \bb^{n+1-l} \odot (i\frac\xi{|\xi|})^{n-l}\big)(\e|\xi|)^{n-1}\Big)\xi}{1+\sum_{l=2}^{\ell}\kappa_l (\e |\xi|)^{l-1}},
\end{equation}
equation~\eqref{eq:proxy-baru-III} can be written in Fourier space as
\begin{equation}\label{eq:PDEFourierIII}
\partial_t^2\mathcal{F}[\bar u_{\e}^{\operatorname{(III)},\ell}]+\mu_{\e}^{\operatorname{(III)},\ell}\mathcal{F}[\bar u_{\e}^{\operatorname{(III)},\ell}]=\hat f.
\end{equation}
Note that~\eqref{eq:defin-symbol-III} makes sense as $\kappa_l\ge0$ for all $l$.
In addition, the choice~\eqref{eq:choice-kappaell-B} of $\{\kappa_l\}_l$ precisely ensures that all the terms of the sum over $n$ in the numerator of~\eqref{eq:defin-symbol-III} are nonnegative (and actually vanish for $n$ even {due to Proposition \ref{prop:vanish}}). Only keeping the term for $n=1$, and recalling $\kappa_1=1$ by definition, we deduce the lower bound
\begin{equation*}
\mu_{\e}^{\operatorname{(III)},\ell}(\xi)\,\ge\,\frac{\xi \cdot \bar\Aa\xi}{1+\sum_{l=2}^{\ell}\kappa_l (\e |\xi|)^{l-1}}\,\ge\,\frac{\lambda|\xi|^2}{1+\sum_{l=2}^{\ell}\kappa_l (\e |\xi|)^{l-1}},
\end{equation*}
which is pointwise  non-negative.
We can then define a solution of~\eqref{eq:PDEFourierIII} via Duhamel's formula
\begin{equation*}
\mathcal{F}[\bar u_{\e}^{\operatorname{(III)},\ell}](\xi)\,=\,\int_0^t \frac{\sin\big((t-s)\mu^{\operatorname{(III)},\ell}_{\e}(\xi)^{1/2}\big)}{\mu^{\operatorname{(III)},\ell}_{\e}(\xi)^{1/2}}\hat f^s(\xi)\,ds,
\end{equation*}
and thus, upon inverse Fourier transformation, this provides a weak solution of~\eqref{eq:proxy-baru-III} in~$\Ld^\infty_\loc(\R;\Ld^2(\R^d))$.

\medskip\noindent
We turn to the proof of a priori estimates. As in Step~1, it suffices to establish the estimate in energy norm.
For that purpose, we start from the following equivalent formulation of~\eqref{eq:PDEFourierIII},
\begin{equation}\label{eq:PDEFourier_other}
\beta_{\e}^{\ell}\partial_t^2\mathcal{F}[\bar u_{\e}^{\operatorname{(III)},\ell}]+\gamma_{\e}^{\ell}\mathcal{F}[\bar u_{\e}^{\operatorname{(III)},\ell}]\,=\,\beta_{\e}^{\ell}\hat f,
\end{equation}
in terms of the symbols
\begin{eqnarray*}
\beta_{\e}^{\ell}(\xi)&:=&1+\sum_{l=2}^{\ell}\kappa_l (\e |\xi|)^{l-1},\\
\gamma_{\e}^{\ell}(\xi)&:=&\xi \cdot \bigg(\sum_{n=1}^\ell \Big(\kappa_n\bar\Aa+ \sum_{l=1}^{n-1}\kappa_{l} \bar \bb^{n+1-l} \odot (i\tfrac\xi{|\xi|})^{n-l}\Big)(\e|\xi|)^{n-1}\bigg)\xi,
\end{eqnarray*}
with $\gamma_\e^\ell/\beta_\e^\ell=\mu_{\e}^{\operatorname{(III)},\ell}$.
Arguing as in Step~1, and using that $\beta_{\e}^{\ell}(\xi)\geq 1$ and $\gamma_{\e}^{\ell}(\xi)\ge \lambda |\xi|^2$, we find that any ancient solution of~\eqref{eq:PDEFourier_other} satisfies
\begin{equation*}
\|D\bar u_{\e}^{\operatorname{(III)},\ell;t}\|_{L^2(\R^d)}\,\lesssim\,\int_0^t \|(\beta_{\e}^{\ell})^\frac12 \hat f\|_{L^2(\R^d)}.
\end{equation*}
Inserting the upper bound
\begin{equation*}
\beta_{\e}^{\ell}(\xi)\,\le\, \sum_{l=1}^{\ell}(\e C|\xi|)^{l-1}\mathds1_{l\text{ odd}}\,\le\, C^{\ell}\langle\e\xi\rangle^{2\lfloor\frac{\ell-1}2\rfloor}, 
\end{equation*}
we are led to the claimed a priori estimate on the energy norm.

\medskip
\step5 Proof of~(iii): comparison of modified equations.\\
We analyze the differences $\bar v^{(\star),\ell}_\e:=\bar u_{\e}^{(\star),\ell}-\bar U_{\e}^{\ell}$, and we start with $(\star)=\text{(I)}$. By definition, it satisfies the equation
\begin{equation*}
\partial_t^2 \bar v^{\operatorname{(I)},\ell}_\e-\nabla\cdot\Big(\bar\Aa+\sum_{k=2}^\ell\bar\bb^{k}\odot(\e\nabla)^{k-1}\Big)\nabla\bar v^{\operatorname{(I)},\ell}_\e=(\chi(\e^{\alpha}\nabla )-1)f,
\end{equation*}
so that \eqref{eq:negativeNorm} yields for all $r\ge0$,
\begin{equation*}
\|\langle D\rangle^r\bar v_\e^{\operatorname{(I)},\ell;t}\|_{\Ld^2(\R^d)}\,\lesssim\,\|\langle D\rangle^{r}(\chi(\e^{\alpha}\nabla)-1)f\|_{\Ld^1((0,t);\dot H^{-1}(\R^d))}.
\end{equation*}
By the properties of the cut-off function $\chi$, we find for all $\xi\in\R^d$ and $k\ge0$,
\begin{equation}\label{eq:cutoff-error}
|\chi(\e^{\alpha}\xi)-1|\,\le\,\mathds{1}_{|\e^{\alpha}\xi|\geq\frac12}\,\le\, (2\e^{\alpha}|\xi|)^k.
\end{equation}
Choosing $k={\lfloor \ell/\alpha\rfloor}+1$, we then get by Plancherel's formula,
\begin{equation*}
\|\langle D\rangle^r \bar v_\e^{\operatorname{(I)},\ell;t}\|_{\Ld^2(\R^d)}\,\lesssim\,(\e C)^{\ell}\|\langle D\rangle^{K_{\alpha}\ell +r}f\|_{\Ld^1((0,t);\Ld^2(\R^d))}
\end{equation*}
with {$K_{\alpha}\le1/\alpha$}, as claimed.

\medskip\noindent
We turn to the case $(\star)=\operatorname{(II)}$. By definition, the difference $\bar v_\e^{\operatorname{(II)},\ell}$ satisfies the following equation,
\begin{equation*}
\partial_t^2\bar v_\e^{\operatorname{(II)},\ell}-\nabla\cdot\Big(\bar\Aa+\sum_{k=2}^\ell\bar\bb^{k}\odot(\e\nabla)^{k-1}+\kappa_\ell(\e|\nabla|)^\ell\Big)\nabla\bar v_\e^{\operatorname{(II)},\ell}
\,=\,\kappa_\ell (\e|\nabla|)^\ell\triangle\bar U_{\e}^{\ell}.
\end{equation*}
Hence, combining~\eqref{eq:negativeNorm} and the a priori estimate of item~(ii),  together with the bound $\kappa_{\ell}\leq C^{\ell}$, we get
\begin{eqnarray*}
\|\langle D\rangle^r v_\e^{\operatorname{(II)},\ell;t}\|_{\Ld^2(\R^d)}&\le&C^\ell\|\langle D\rangle^r (\e|\nabla|)^\ell\nabla\bar U_{\e}^{\ell}\|_{\Ld^1((0,t);\Ld^2(\R^d))}\\
&\lesssim& (\e C)^{\ell}\|\langle D\rangle^{\ell+r}\nabla\bar U_{\e}^{\ell}\|_{\Ld^1((0,t);\Ld^2(\R^d))}\\
&\lesssim& (\e C)^{\ell}\langle t\rangle\|\langle D\rangle^{\ell+r}f\|_{\Ld^1((0,t);\Ld^2(\R^d))}.
\end{eqnarray*}
It remains to treat the case $(\star)=\operatorname{(III)}$. Starting from~\eqref{eq:homog-lim-sp} and~\eqref{eq:proxy-baru-III}, and recalling $\kappa_1=1$, we get the following equation for the corresponding difference,
\begin{multline*}
\partial_t^2\Big(1+\sum_{l=2}^{\ell}  \kappa_l (\e |\nabla|)^{l-1}\Big)\bar v_\e^{\operatorname{(III)},\ell}
\\
-\nabla\cdot\bigg(\sum_{n=1}^\ell \Big(
 \kappa_{n}\bar\Aa+ \sum_{l=1}^{n-1} \kappa_{l}\bar \bb^{n+1-l}\odot(\tfrac{\nabla}{|\nabla|})^{n-l}\Big)(\e|\nabla|)^{n-1}\bigg) \nabla\bar v_\e^{\operatorname{(III)},\ell}\\
\,=\, 
\sum_{l=2}^{\ell}  \kappa_l (\e |\nabla|)^{l-1}(f-\partial_t^2\bar U_\e^{\ell})
\\
+\nabla\cdot\bigg(\sum_{n=2}^\ell \Big(
 \kappa_{n}\bar\Aa+ \sum_{l=2}^{n-1} \kappa_{l}\bar \bb^{n+1-l}\odot(\tfrac{\nabla}{|\nabla|})^{n-l}\Big)(\e|\nabla|)^{n-1}\bigg) \nabla\bar U_\e^{\ell},
\end{multline*}
and thus, further using the equation for $\bar U_\e^{\ell}$ in the right-hand side, we get after reorganizing the terms,
\begin{multline}\label{eq:reform-uIII-U}
\partial_t^2\Big(1+\sum_{l=2}^{\ell}  \kappa_l (\e |\nabla|)^{l-1}\Big)\bar v_\e^{\operatorname{(III)},\ell}
\\
-\nabla\cdot\bigg(\sum_{n=1}^\ell \Big(
 \kappa_{n}\bar\Aa+ \sum_{l=1}^{n-1} \kappa_{l}\bar \bb^{n+1-l}\odot(\tfrac{\nabla}{|\nabla|})^{n-l}\Big)(\e|\nabla|)^{n-1}\bigg) \nabla\bar v_\e^{\operatorname{(III)},\ell}\\
\,=\,-\nabla\cdot\Big(\sum_{l=2}^{\ell}\sum_{k=\ell+2-l}^\ell\kappa_l\bb^k\odot(\e\nabla)^{k-1}(\e |\nabla|)^{l-1}\Big)\nabla\bar U_\e^{\ell}.
\end{multline}
Combining~{\eqref{eq:negativeNorm} and the a priori estimate of item~(ii)}, together with the bounds $|\bb^k|\le C^k$ and $\kappa_\ell\le C^\ell$, we get
\begin{eqnarray*}
\|\langle D\rangle^r\bar v_\e^{\operatorname{(III)},\ell}\|_{\Ld^2(\R^d)}
&\le&(\e C)^\ell\|\langle D\rangle^{2\ell+r-2}\nabla\bar U_\e^\ell\|_{\Ld^1((0,t);\Ld^2(\R^d))}\\
&\le&(\e C)^\ell\langle t\rangle\|\langle D\rangle^{2\ell+r-2}f\|_{\Ld^1((0,t);\Ld^2(\R^d))},
\end{eqnarray*}
and the conclusion follows.\qed
\endgroup

\subsection{Proof of Theorem~\ref{th:main-per}}
Let $\Aa$ be $Q$-periodic.
We split the proof into three steps. We first establish~\eqref{eq:2scale-concl-per} for the energy norm, before turning to the $\Ld^2$-estimate, which requires some additional care. We start by assuming that $\supp\hat f\subset \R\times B_R$ for some $R\ge1$, and then conclude with the general case in the last step.

\medskip
\step1 Proof of~\eqref{eq:2scale-concl-per} for the energy norm in case $\supp\hat f\subset\R\times B_R$ with $\e R\ll1$.\\
For simplicity, we start by assuming momentarily that the corrector estimates in Lemma~\ref{lem:cor-est-per} hold uniformly in the sense of
\begin{eqnarray}\label{eq:corr-estim-Linfty}
\|(\psi^n,\sigma^n)\|_{{W^{1,\infty}(Q)}}&\le& C^n,\nonumber\\
\text{and}~~\|(\zeta^{n,m},\tau^{n,m})\|_{{W^{1,\infty}(Q)}}&\le& C^{n+m+1},\qquad\text{for all $n,m\ge0$.}
\end{eqnarray}
As $\supp\hat f\subset \R\times B_R$ with $\e R\ll1$, we can consider the solution $\bar U_\e^\ell$ of the formal effective equation~\eqref{eq:homog-lim-sp} as given by Lemma~\ref{lem:apriori-baru-sp}(i).
From Proposition~\ref{prop:form-spectral} and Lemma~\ref{lem:apriori}, using the assumed uniform corrector estimate~\eqref{eq:corr-estim-Linfty}, we then obtain
\begin{multline*}
\|D(u_\e^{t}-S_\e^{\ell}[\bar U_\e^{\ell;t},f^t])\|_{\Ld^2(\R^d)}\\
 \,\le\,
(\e C)^{\ell}\|\langle D\rangle^{2\ell}f\|_{\Ld^1((0,t);\Ld^2(\R^d))}
+(\e C)^\ell\|\langle D\rangle^{2\ell}D\bar U_\e^{\ell}\|_{\Ld^1((0,t);\Ld^2(\R^d))},
\end{multline*}
and thus, combining this with the a priori bounds of Lemma~\ref{lem:apriori-baru-sp}(i),
\begin{equation}\label{e.difficult-L2-er}
\|D(u_\e^{t}-S_\e^{\ell}[\bar U_\e^{\ell;t},f^t])\|_{\Ld^2(\R^d)}\\
 \,\le\,
(\e C)^\ell\langle t\rangle\|\langle D\rangle^{2\ell}f\|_{\Ld^1((0,t);\Ld^2(\R^d))}.
\end{equation}
It remains to replace $\bar U_\e^{\ell}$ by $\bar u_\e^{(\star),\ell}$ in the left-hand side for $(\star)=\text{(I)}$, $\text{(II)}$, or $\text{(III)}$. For that purpose, recalling the definition of the spectral two-scale expansion, cf.~\eqref{eq:2s-spec}, and using the assumed uniform corrector estimates~\eqref{eq:corr-estim-Linfty}, we note that
\begin{equation*}
\|D(S_\e^{\ell}[\bar u_\e^{(\star),\ell},f]-S_\e^{\ell}[\bar U_\e^{\ell},f])\|_{\Ld^2(\R^d)}
\,\le\,C^\ell\|\langle\nabla\rangle^\ell D(\bar u_\e^{(\star),\ell}-\bar U_\e^{\ell})\|_{\Ld^2(\R^d)},
\end{equation*}
hence, by Lemma~\ref{lem:apriori-baru-sp}(iii),
\begin{equation*}
\|D(S_\e^{\ell}[\bar u_\e^{(\star),\ell;t},f^t]-S_\e^{\ell}[\bar U_\e^{\ell;t},f^t])\|_{\Ld^2(\R^d)}
\,\le\,(\e C)^\ell\langle t\rangle\|\langle D\rangle^{C\ell}f\|_{\Ld^1((0,t);\Ld^2(\R^d))}.
\end{equation*}
Combined with~\eqref{e.difficult-L2-er}, this proves the claim~\eqref{eq:2scale-concl-per} for the energy norm in case $\supp\hat f\subset \R\times B_R$ with $\e R\ll1$, provided that~\eqref{eq:corr-estim-Linfty} holds.

\medskip\noindent
It remains to treat the case when the uniform boundedness assumption~\eqref{eq:corr-estim-Linfty} for correctors is not satisfied.
In that case, we rather appeal to the Sobolev embedding to estimate products with correctors: for any periodic corrector {or corrector gradient $\varphi\in\{\psi^n,\nabla\psi^n,\sigma^n,\nabla\sigma^n,\zeta^{n,m},\nabla\zeta^{n,m},\tau^{n,m},\nabla\tau^{n,m}\}_{n,m}$}, we can estimate, for any function $g$,
\begin{eqnarray}
\|\varphi(\tfrac\cdot\e) g\|_{\Ld^2(\R^d)}^2
&\le&\int_{\R^d} \Big(\fint_{B_\e(x)} |\varphi(\tfrac\cdot\e)|^2\Big)\Big(\sup_{B_\e(x)} |g|^2\Big)\,dx\nonumber\\
&\lesssim&\|\varphi\|_{\Ld^2(Q)}^2\int_{\R^d}\Big(\sup_{B_\e(x)} |g|^2\Big)\,dx\nonumber\\
&\lesssim&\|\varphi\|_{\Ld^2(Q)}^2\|g\|_{H^a(\R^d)}^2,\label{eq:use-Sobolev-cor}
\end{eqnarray}
provided $a>\frac d2$.
Up to a fixed loss of derivatives in the estimates, we may then appeal to the $\Ld^2$ corrector estimates in Lemma~\ref{lem:cor-est-per}, and the above proof of~\eqref{eq:2scale-concl-per} for the energy norm is adapted directly.

\medskip
\step2 Proof of \eqref{eq:2scale-concl-per} for the $\Ld^2$-norm in case $\supp\hat f\subset B_R$ with $\e R\ll1$.\\
As in Step~1, we aim to apply Proposition~\ref{prop:form-spectral} and Lemma~\ref{lem:apriori}, together with corrector estimates. However, the following terms are a priori problematic in the right-hand side of the equation for the spectral two-scale expansion given by Proposition~\ref{prop:form-spectral},
\begin{multline}\label{eq:def-TepsL}
T_\e^\ell\,:=\,-\e^\ell\sigma^\ell_{i_1\ldots i_{\ell}}(\tfrac\cdot\e): \gamma_\ell(\e\nabla) \nabla^2\nabla^{\ell}_{i_1\ldots i_{\ell}} \bar U_\e^{\ell}
\\
+\sum_{n=1}^\ell\sum_{k=\ell+2-n}^{\ell+1} \e^{n+k-2} \psi^n_{i_1\ldots i_n}(\tfrac\cdot\e) \bar \bb^{k-1}_{i_{n+1}\ldots i_{n+k-2}}:\gamma_\ell(\e\nabla)\nabla^2\nabla_{i_1\ldots i_{n+k-2}}^{n+k-2}\bar U_\e^{\ell}.
\end{multline}
Indeed, these terms are not total derivatives and involve $\bar U_\e^\ell$ itself: when applying Lemma~\ref{lem:apriori} to estimate the $\Ld^2$-norm of the two-scale expansion error, these terms would therefore contribute like
\[(\e C)^\ell\langle t\rangle^2\|{\langle{D}\rangle^{C\ell}} f\|_{\Ld^1((0,t);\Ld^2(\R^d))}\]
with a prefactor $\langle t\rangle^2$ instead of $\langle t\rangle$.
In order to improve on this, we shall reformulate $T_\e^\ell$ as a total time-derivative up to terms that depend only locally on~$f$.
Using the short-hand notation
\[\bar \Lc_\e^\ell\,:=\,-\nabla\cdot\Big(\bar\Aa+\sum_{k=2}^\ell\bar\bb^{k}\odot(\e\nabla)^{k-1}\Big)\nabla,\]
the effective equation for $\bar U_\e^\ell$ entails
\[\bar \Lc_\e^{\ell} \nabla^2 \bar U_\e^{\ell} = - \partial_{t}^2\nabla^2 \bar U_\e^{\ell}+\nabla^2 f.\]
As in the proof of Lemma~\ref{lem:apriori-baru-sp}(i), cf.~\eqref{eq:coercivityinFourier}, the assumption $\e R\ll1$ precisely ensures that the operator $\bar \Lc_\e^{\ell}:\Ld^2(\R^d)\to\dot H^{-2}(\R^d)$ can be inverted when restricted to functions with spatial Fourier transform supported in $B_R$. As by definition both $\bar U_\e^{\ell}$ and $f$ have spatial Fourier transform supported in $B_R$, we may then write
\begin{equation}\label{eq:reform-barU-ut}
\nabla^2 \bar U_\e^{\ell} = - \partial_{t}^2 (\bar \Lc_\e^{\ell})^{-1} \nabla^2  \bar U_\e^{\ell}+ (\bar \Lc_\e^{\ell})^{-1} \nabla^2 f.
\end{equation}
By uniform ellipticity~\eqref{eq:coercivityinFourier}, we have for any function $g$ with $\supp\hat g\subset B_R$,
\begin{equation}\label{eq:coerxivityinFourier-appl}
\|(\bar \Lc_\e^{\ell})^{-1} \nabla^2 g\|_{\Ld^2(\R^d)} \,\lesssim\, \|g\|_{\Ld^2(\R^d)}.
\end{equation}
Now using~\eqref{eq:reform-barU-ut} to reformulate $T_\e^\ell$, cf.~\eqref{eq:def-TepsL}, we get
\begin{multline*}
T_\e^\ell\,=\,
\e^\ell\partial_{t}^2 \Big(\sigma^\ell_{i_1\ldots i_{\ell}}(\tfrac\cdot\e): \gamma_\ell(\e\nabla) \nabla^{\ell}_{i_1\ldots i_{\ell}}(\bar \Lc_\e^{\ell})^{-1} \nabla^2  \bar U_\e^{\ell}\Big)\\
-\sum_{n=1}^\ell\sum_{k=\ell+2-n}^{\ell+1} \e^{n+k-2}\partial_{t}^2\Big( \psi^n_{i_1\ldots i_n}(\tfrac\cdot\e) \bar \bb^{k-1}_{i_{n+1}\ldots i_{n+k-2}}:\gamma_\ell(\e\nabla)\nabla_{i_1\ldots i_{n+k-2}}^{n+k-2} (\bar \Lc_\e^{\ell})^{-1} \nabla^2  \bar U_\e^{\ell}\Big)\\
-\e^\ell\sigma^\ell_{i_1\ldots i_{\ell}}(\tfrac\cdot\e): \gamma_\ell(\e\nabla) \nabla^{\ell}_{i_1\ldots i_{\ell}}(\bar \Lc_\e^{\ell})^{-1} \nabla^2 f\\
+\sum_{n=1}^\ell\sum_{k=\ell+2-n}^{\ell+1} \e^{n+k-2} \psi^n_{i_1\ldots i_n}(\tfrac\cdot\e) \bar \bb^{k-1}_{i_{n+1}\ldots i_{n+k-2}}:\gamma_\ell(\e\nabla)\nabla_{i_1\ldots i_{n+k-2}}^{n+k-2}(\bar \Lc_\e^{\ell})^{-1} \nabla^2 f.
\end{multline*}
Using this to replace the corresponding terms in the equation for the two-scale expansion error in Proposition~\ref{prop:form-spectral}, appealing to Lemma~\ref{lem:apriori} to estimate its $\Ld^2$-norm, using the corrector estimates of Lemma~\ref{lem:cor-est-per}, using the Sobolev embedding to estimate products with correctors as in~\eqref{eq:use-Sobolev-cor}, and using~\eqref{eq:coerxivityinFourier-appl}, we get for $a>\frac d2$,
\begin{multline*}
\|u_\e^t-S_\e^\ell[\bar U_\e^{\ell;t},f^t]\|_{\Ld^2(\R^d)}\\
\,\le\,(\e C)^\ell\langle t\rangle\|\langle D\rangle^{2\ell+a} f\|_{\Ld^1((0,t);\Ld^2(\R^d))}
+(\e C)^\ell\|\langle D\rangle^{2\ell+a}D\bar U_\e^\ell\|_{\Ld^1((0,t);\Ld^2(\R^d))},
\end{multline*}
and thus, by the a priori estimate of Lemma~\ref{lem:apriori-baru-sp}(i),
\begin{equation}\label{eq:pre-step2-ueps-SbarUeps}
\|u_\e^t-S_\e^\ell[\bar U_\e^{\ell;t},f^t]\|_{\Ld^2(\R^d)}
\,\le\,(\e C)^\ell\langle t\rangle\|\langle D\rangle^{2\ell+a} f\|_{\Ld^1((0,t);\Ld^2(\R^d))}.
\end{equation}
It remains to argue as in Step~1 to replace $\bar U_\e^\ell$ by $\bar u_\e^{(\star),\ell}$ in the left-hand side.
For that purpose, recalling the definition of the spectral two-scale expansion, cf.~\eqref{eq:2s-spec}, and using again the corrector estimates of Lemma~\ref{lem:cor-est-per} together with the Sobolev embedding as in~\eqref{eq:use-Sobolev-cor}, we note that for $a>\frac d2$,
\begin{equation*}
\|S_\e^{\ell}[\bar u_\e^{(\star),\ell},f]-S_\e^{\ell}[\bar U_\e^{\ell},f]\|_{\Ld^2(\R^d)}
\,\le\,C^\ell\|\langle\nabla\rangle^{\ell+a} (\bar u_\e^{(\star),\ell}-\bar U_\e^{\ell})\|_{\Ld^2(\R^d)},
\end{equation*}
hence, by Lemma~\ref{lem:apriori-baru-sp}(iii),
\begin{equation*}
\|S_\e^{\ell}[\bar u_\e^{(\star),\ell;t},f^t]-S_\e^{\ell}[\bar U_\e^{\ell;t},f^t]\|_{\Ld^2(\R^d)}
\,\le\,(\e C)^\ell\langle t\rangle\|\langle D\rangle^{C\ell}f\|_{\Ld^1((0,t);\Ld^2(\R^d))}.
\end{equation*}
Combined with~\eqref{eq:pre-step2-ueps-SbarUeps}, this proves the claim~\eqref{eq:2scale-concl-per} for the $\Ld^2$-norm in case $\supp\hat f\subset B_R$ with $\e R\ll1$.

\medskip
\step3 Approximation argument: proof for general $f$.\\
For $R\ge1$, consider the truncated impulse
\[f_R\,:=\,\chi(\tfrac1R\nabla)f,\]
and let $\bar u_{\e,R}^{(\star),\ell}$ be the solution of the modified effective equations given by Lemma~\ref{lem:apriori-baru-sp}(ii) with impulse $f$ replaced by $f_R$.
Recalling the definition of the spectral two-scale expansion, cf.~\eqref{eq:2s-spec}, and using again corrector estimates, we then note that for $a>\frac d2$,
\begin{multline*}
\|D(S_\e^{\ell}[\bar u_{\e,R}^{(\star),\ell},f_R]-S_\e^{\ell}[\bar u_\e^{(\star),\ell},f])\|_{\Ld^2(\R^d)}\\
\,\le\,C^\ell\|\langle\nabla\rangle^{\ell+a} D(\bar u_\e^{(\star),\ell}-\bar u_{\e,R}^{(\star),\ell})\|_{\Ld^2(\R^d)}
+C^\ell\|\langle D\rangle^{\ell+a}(f-f_R)\|_{\Ld^2(\R^d)},
\end{multline*}
and thus, by linearity and by the a priori estimates of Lemma~\ref{lem:apriori-baru-sp}(ii),
\begin{equation*}
\|D(S_\e^{\ell}[\bar u_{\e,R}^{(\star),\ell;t},f_R^t]-S_\e^{\ell}[\bar u_\e^{(\star),\ell;t},f^t])\|_{\Ld^2(\R^d)}
\,\le\,C^\ell\|{\langle D\rangle^{C\ell}}(f-f_R)\|_{\Ld^1((0,t);\Ld^2(\R^d))}.
\end{equation*}
By definition of $f_R$, as in~\eqref{eq:cutoff-error}, the right-hand side can now be estimated as follows, for any $k\ge0$,
\begin{equation*}
\|D(S_\e^{\ell}[\bar u_{\e,R}^{(\star),\ell;t},f_R^t]-S_\e^{\ell}[\bar u_\e^{(\star),\ell;t},f^t])\|_{\Ld^2(\R^d)}
\,\le\,R^{-k}C^\ell\|\langle D\rangle^{C\ell+k}f\|_{\Ld^1((0,t);\Ld^2(\R^d))}.
\end{equation*}
Combining this with the results of Steps~1 and~2, the conclusion follows for instance with the choice $R=\e^{-1/2}$ and $k=2\ell$.
\qed

\subsection{Proof of Corollary~\ref{cor:summable}}
The bound \eqref{e.bound-summable-1} is obtained along the same line as Theorem~\ref{th:main-per} (using a straightforward adaptation of Lemma~\ref{lem:apriori}),
and it remains to deduce~\eqref{e.bound-summable-2}.
For that purpose, we note that~\eqref{e.bound-summable-2} would actually follow from~\eqref{e.bound-summable-1} together with the bound
\begin{equation}\label{eq:goal-cor1-pre}
\int_{-\infty}^t(t-s) \|\langle D\rangle^{C\ell}   f^s\|_{\Ld^2(\R^d)}\,ds\, \le\, {C_f^\ell},
\end{equation}
in favor of which we presently argue.
Recall that we assume here $f^t(x)=f_1(t)f_2(x)$, where $f_1$ has a smooth and compactly supported Fourier transform on~$\R$ and where~$f_2$ has a compactly supported Fourier transform on~$\R^d$.
The assumption on $f_2$ entails $\|\langle\nabla\rangle^{C\ell} f_2\|_{\Ld^2(\R^d)} \le{C_{f_2}^\ell}$, while the assumption on $f_1$ yields for $s\le0$,
\begin{eqnarray*}
\|\langle\partial_t\rangle^{C\ell}  f_1\|_{\Ld^1((-\infty,s))}&\lesssim&\langle s\rangle^{-2}\|\langle\cdot\rangle^3\langle\partial_t\rangle^{C\ell}  f_1\|_{\Ld^\infty((-\infty,s))}\\
&\le&\langle s\rangle^{-2}C^\ell\|\langle\cdot\rangle^{C\ell}\langle\partial\rangle^3\hat f_1\|_{\Ld^\infty((-\infty,s))}\\
&\le&\langle s\rangle^{-2}{C_{f_1}^\ell},
\end{eqnarray*}
where $\hat f_1$ stands for the temporal Fourier transform of $f_1$.
The claim~\eqref{eq:goal-cor1-pre} follows.
\qed

\subsection{Proof of Theorem~\ref{th:main2}}\label{sec:proof-random}
We briefly explain how the above proof of Theorem~\ref{th:main-per} is adapted to the random setting.
As for Theorem~\ref{th:main-per}, we may assume $\supp\hat f\subset\R\times B_R$ with $\e R\ll1$, and the general conclusion follows by approximation.
As explained in Appendix~\ref{sec:rand-cor}, the only difference with respect to the periodic setting is that only a finite number $\ell_*=\lceil\frac{\beta\wedge d}2\rceil$ of correctors can be defined with stationary gradient, and the highest-order corrector has a nontrivial sublinear growth.
Using Proposition~\ref{prop:form-spectral} in combination with Lemma~\ref{lem:apriori} to estimate the energy norm of the two-scale expansion error of order~$\ell\le\ell_*$, and using the corrector estimates of Appendix~\ref{sec:rand-cor} and the Sobolev embedding, we find for $a>\frac d2$,
\begin{multline*}
\|D(u_\e^t-S_\e^{\ell}[\bar U_\e^{\ell;t},f^t])\|_{\Ld^q(\Omega;\Ld^2(\R^d))}\\
\,\lesssim\,
\e^{\ell}\|\mu_\ell^*(\tfrac\cdot\e)\gamma(\e\nabla)\langle D\rangle^{2\ell+a}f\|_{\Ld^1((0,t);\Ld^2(\R^d))}\\
+\e^{\ell}\|\mu_\ell^*(\tfrac\cdot\e)\gamma(\e\nabla)\langle D\rangle^{2\ell+a}D\bar U_\e^{\ell}\|_{\Ld^1((0,t);\Ld^2(\R^d))},
\end{multline*}
where the weight $\mu_\ell^*$ originates in the growth of correctors and is defined in~\eqref{eq:mun-*}.
A novelty with respect to the proof of Theorem~\ref{th:main-per} is that we now need weighted energy estimates for~$\bar U_\e^{\ell}$ with sublinear weight $\mu_\ell^*$.
For that purpose, as the homogenized equation~\eqref{eq:homog-lim-sp} has constant coefficients,
we note that $\bar U_\e^{\ell}$ displays ballistic transport, and therefore
\begin{eqnarray*}
\|\mu_\ell^*(\tfrac\cdot\e)\gamma(\e\nabla)\langle D\rangle^{2\ell+a}D\bar U_\e^{\ell;t}\|_{\Ld^2(\R^d)}
&\lesssim&\mu_\ell^*(\tfrac{1}\e\langle t\rangle)\,\|\langle\cdot\rangle\gamma(\e\nabla)\langle D\rangle^{C}f\|_{\Ld^1((0,t);\Ld^2(\R^d))}\\
&\lesssim&\mu_\ell^*(\tfrac{1}\e\langle t\rangle)\,\|\langle\cdot\rangle\langle D\rangle^{C}f\|_{\Ld^1((0,t);\Ld^2(\R^d))}.
\end{eqnarray*}
This is easily obtained by interpolation, using that the weight $x$ corresponds to a derivative in Fourier space and using the Duhamel formula for $\bar U_\e^{\ell}$; see e.g.~\cite[Proof of Proposition~3]{BG} for the details.
The above then becomes
\begin{equation*}
\|D(u_\e^t-S_\e^{\ell}[\bar U_\e^{\ell;t},f^t])\|_{\Ld^q(\Omega;\Ld^2(\R^d))}
\,\lesssim\,
\e^{\ell}\langle t\rangle\,\mu_\ell^*(\tfrac1\e\langle t\rangle)\,\|\langle\cdot\rangle\langle D\rangle^{C}f\|_{\Ld^1((0,t);\Ld^2(\R^d))}.
\end{equation*}
Note that the error estimate for $\ell=\ell_*-1$ is occasionally better than the one for $\ell=\ell_*$.
Optimizing between the results for $\ell=\ell_*$ and for $\ell=\ell_*-1$, we easily conclude
\begin{equation*}
\|D(u_\e^t-S_\e^{\ell_*}[\bar U_\e^{\ell_*;t},f^t])\|_{\Ld^q(\Omega;\Ld^2(\R^d))}
\,\lesssim\,
\e^{\ell_*}\langle t\rangle\big(\mu_{\ell_*}^*(\tfrac1\e\langle t\rangle)\wedge\tfrac1\e\big)\,\|\langle\cdot\rangle\langle D\rangle^{C}f\|_{\Ld^1((0,t);\Ld^2(\R^d))}.
\end{equation*}
As in the proof of Theorem~\ref{th:main-per}, we can replace $\bar U_\e^{\ell_*}$ in the left-hand side by the solution of any well-posed modification of the formal homogenized equation, and we can derive a similar estimate for the $\Ld^2$-norm.
\qed

\section{Geometric approach and hyperbolic two-scale expansion} \label{sec:structure}

This section is devoted to the definition of hyperbolic correctors and to the proof of Theorem~\ref{th:main-per2}, including the well-posedness of the formal homogenized equation~\eqref{eq:form-hom}.
This essentially constitutes a rewriting of \cite{ALR,Pouch-17,Pouch-19} and is needed to rigorously relate those works to the spectral two-scale expansion, cf.~Section~\ref{sec:fromGeom2Spec}.

\subsection{Definition of hyperbolic correctors}\label{sec:cor-eqn}
We start with the definition of the hyperbolic correctors $\{\phi^{n,m}\}_{n,m}$ and of the homogenized tensors $\{\bar\Aa^{n,m}\}_{n,m}$, as motivated in Section~\ref{sec:geom-insight}.

\begin{defin}[Hyperbolic correctors]\label{defi:BC}
In the periodic setting, the hyperbolic correctors $\{\phi^{n,m}\}_{n,m\ge0}$, homogenized tensors $\{\bar \Aa^{n,m}\}_{n\ge1,m\ge0}$, and fluxes $\{q^{n,m}\}_{n,m\ge0}$ are inductively defined as follows:
\begin{enumerate}[$\bullet$]
\item We set $\phi^{0,0}:=1$ and $\phi^{0,m}:=0$ for $m\ge1$, while for $n\ge1$ and $m\ge0$ we define $\phi^{n,m}:=(\phi^{n,m}_{j_1\ldots j_n})_{1\le j_1,\ldots, j_n\le d}$ with $\phi_{j_1\ldots j_n}^{n,m}\in H^1_\per(Q)$ the periodic scalar field that has vanishing average $\expecm{\phi_{j_1\ldots j_n}^{n,m}}=0$ and satisfies
\begin{equation*}
\quad-\nabla\cdot\Aa\nabla\phi_{j_1\ldots j_n}^{n,m}\,=\,\nabla\cdot\big(\Aa\phi_{j_1\ldots j_{n-1}}^{n-1,m}\ee_{j_n}\big)
+\ee_{j_n}\cdot q^{n-1,m}_{j_1\ldots j_{n-1}}.
\end{equation*}
\item For $n\ge1$ and $m\ge0$, we define $\bar\Aa^{n,m}:=(\bar\Aa^{n,m}_{j_1\ldots j_{n-1}})_{1\le j_1,\ldots, j_{n-1}\le d}$ as the matrix-valued $(n-1)$th-order tensor given by
\[\quad\bar\Aa^{n,m}_{j_1\ldots j_{n-1}}\ee_{j}\,:=\,\expec{\Aa\big(\nabla\phi_{j_1\ldots j_{n-1}j}^{n,m}+\phi_{j_1\ldots j_{n-1}}^{n-1,m}\ee_j\big)}.\]
\item For $n,m\ge0$, we define $q^{n,m}:=(q^{n,m}_{j_1\ldots j_n})_{1\le j_1,\ldots,j_n\le d}$ with $q^{n,m}_{j_1\ldots j_n}\in\Ld^2_\per(Q)^{d}$ the periodic vector field given by
\[q^{n,m}_{j_1\ldots j_n}\,:=\,\Aa\big(\nabla\phi_{j_1\ldots j_{n}}^{n,m}+\phi_{j_1\ldots j_{n-1}}^{n-1,m}\ee_{j_{n}}\big)-\phi_{j_1\ldots j_{n}j}^{n+1,m-2}\ee_j-\bar\Aa_{j_1\ldots j_{n-1}}^{n,m}\ee_{j_{n}},\]
where the definition of $\bar \Aa^{n,m}$ ensures $\expec{q^{n,m}}=0$.
\end{enumerate}
In particular, there holds $\phi^{n,m}=0$, $\bar\Aa^{n,m}=0$, and $q^{n,m}=0$ whenever $m$ is an odd integer.\footnote{This is natural as time derivatives should always come in even numbers in view of equation~\eqref{eq:hyperb}.}
\end{defin}

For all $n$, we note that $\phi^{n,0}$ coincides with the elliptic corrector of order $n$, cf.~\cite{BLP-78}.
As \no{is} common in the elliptic setting, it is useful to further introduce suitable flux correctors, which indeed allow us to refine error estimates and slightly improve on the result of~\cite{ALR}. More precisely, flux correctors are designed to allow a direct optimal exploitation of cancellations due to fluxes having vanishing average $\expecm{q^{n,m}}=0$.
We start with the definition of flux correctors for $m\ge1$.

\begin{defin}[Hyperbolic flux correctors]\label{def:cocor}
For $n\ge 0$ and $m\ge1$, we define the hyperbolic flux corrector $\sigma^{n,m}:=(\sigma^{n,m}_{j_1\ldots j_n})_{1\le j_1,\ldots,j_n\le d}$ by
\[\sigma^{n,m}_{j_1\ldots j_n}=(\nabla\Phi^{n,m}_{j_1\ldots j_n})',\]
where $\Phi^{n,m}_{j_1\ldots j_n}\in H^1_\per(Q)^d$ is the periodic vector field that satisfies
\mbox{$\expecm{\Phi_{j_1\ldots j_{n}}^{n,m}}=0$} and
\[-\triangle\Phi^{n,m}_{j_1\ldots j_{n}}\,=\,q^{n,m}_{j_1\ldots j_n}.\qedhere\]
\end{defin}

In case $m=0$, as correctors $\phi^{n,0}$ coincide with elliptic correctors, they display more structure than general hyperbolic correctors. We recall how this structure can be exploited to construct a suitable flux corrector $\sigma^{n,0}$ that is skew-symmetric: the following lemma is essentially borrowed from~\cite{DO1}. 

\begin{lem}[Elliptic correctors and flux correctors]\label{lem:mod-cor}
Up to symmetrization of indices, the elliptic correctors $\{\phi^{n,0}\}_{n\ge0}$ and homogenized tensors $\{\bar\Aa^{n,0}\}_{n\ge1}$ coincide with the modified families $\{\tilde\phi^{n,0}\}_{n\ge0}$ and $\{\tilde\Aa^{n,0}\}_{n\ge1}$ defined via the following cell problems:
\begin{enumerate}[$\bullet$]
\item We set $\tilde\phi^{0,0}:=1$ and for $n\ge1$ we define $\tilde\phi^{n,0}:=(\tilde\phi^{n,0}_{j_1\ldots j_n})_{1\le j_1,\ldots j_n,\le d}$ with $\tilde\phi_{j_1\ldots j_n}^{n,0}\in H^1_\per(Q)$ the periodic scalar field that has vanishing average $\expecm{\tilde\phi_{j_1\ldots j_n}^{n,0}}=0$ and satisfies
\begin{equation*}
\quad
-\nabla\cdot\Aa\nabla\tilde\phi_{j_1\ldots j_n}^{n,0}\,=\,\nabla\cdot\big(\Aa\tilde\phi_{j_1\ldots j_{n-1}}^{n-1,0}\ee_{j_n}\big)
+\ee_{j_n}\cdot\tilde q^{n-1,0}_{j_1\ldots j_{n-1}}.
\end{equation*}
\item For $n\ge1$, we define $\tilde\Aa^{n,0}:=(\tilde\Aa^{n,0}_{j_1\ldots j_{n-1}})_{1\le j_1,\ldots, j_{n-1}\le d}$ with $\tilde\Aa^{n,0}_{j_1\ldots j_{n-1}}$ the matrix given by
\[\quad\tilde\Aa^{n,0}_{j_1\ldots j_{n-1}}\ee_{j}\,:=\,\expec{\Aa\big(\nabla\tilde\phi_{j_1\ldots j_{n-1}j}^{n,0}+\tilde\phi_{j_1\ldots j_{n-1}}^{n-1,0}\ee_j\big)}.\]
\item For $n\ge0$, we define $\tilde q^{n,0}:=(\tilde q^{n,0}_{j_1\ldots j_n})_{1\le j_1,\ldots,j_n\le d}$ with $\tilde q^{n,0}_{j_1\ldots j_n}\in\Ld^2_\per(Q)^{d}$ the periodic vector field given by
\[\quad\tilde q_{j_1\ldots j_n}^{n,0}:=\Aa\big(\nabla\tilde\phi_{j_1\ldots j_n}^{n,0}+\tilde\phi_{j_1\ldots j_{n-1}}^{n-1,0}\ee_{j_n}\big)-\tilde\Aa^{n,0}_{j_1\ldots j_{n-1}}\ee_{j_n}-\sigma_{j_1\ldots j_{n-1}}^{n-1,0}\ee_{j_n},\]
where the definition of $\tilde\Aa^{n,0}$ and $\sigma^{n-1,0}$  ensures $\expec{\tilde q^{n,0}}=0$ and $\nabla\cdot\tilde q^{n,0}_{j_1\ldots j_n}=0$.
\item We set $\sigma^{0,0}:=0$ and we define the flux corrector $\sigma^{n,0}:=(\sigma^{n,0}_{j_1\ldots j_n})_{1\le j_1,\ldots,j_n\le d}$ for $n\ge1$  with $\sigma^{n,0}_{j_1\ldots j_n}\in H^1_\per(Q)^{d\times d}$ the periodic skew-symmetric matrix field that has vanishing average $\expecm{\sigma^{n,0}_{j_1\ldots j_n}}=0$ and satisfies
\[\quad-\triangle\sigma^{n,0}_{j_1\ldots j_n}=\nabla\times\tilde q^{n,0}_{j_1\ldots j_n},\qquad\nabla\cdot\sigma^{n,0}_{j_1\ldots j_n}=\tilde q_{j_1\ldots j_n}^{n,0},\]
where we use the vectorial notation $(\nabla\times F)_{ij}:=\nabla_iF_j-\nabla_jF_i$ for a vector field $F$.
\end{enumerate}
More precisely, these modified correctors coincide with $\{\phi^{n,0}\}_{n\ge0}$ in the sense that we have for all $n\ge0$ and $\xi\in\R^d$,
\[\phi^{n,0}\odot\xi^{\otimes n}\,=\,\tilde\phi^{n,0}\odot\xi^n,\qquad
(\bar\Aa^{n,0}\odot\xi^{\otimes(n-1)})\xi\,=\,(\tilde\Aa^{n,0}\odot\xi^{\otimes(n-1)})\xi.\qedhere\]
\end{lem}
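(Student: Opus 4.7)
The plan is to prove both identities simultaneously by induction on $n\ge0$, introducing the shorthand $\phi^n_\xi:=\phi^{n,0}\odot\xi^{\otimes n}$ and $\tilde\phi^n_\xi:=\tilde\phi^{n,0}\odot\xi^{\otimes n}$ (and analogously for the fluxes). The base case is trivial: for $n=0$ both correctors equal $1$, and for $n=1$ the only possible discrepancy would come from $\sigma^{-1,0}$, which is set to zero by convention. The crux is therefore the inductive step.

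For that, I would contract the defining equations for $\phi^{n,0}_{j_1\ldots j_n}$ and $\tilde\phi^{n,0}_{j_1\ldots j_n}$ with $\xi_{j_1}\ldots\xi_{j_n}$. Both contracted equations take the form
\begin{equation*}
-\nabla\cdot\Aa\nabla\varphi^n_\xi \,=\, \nabla\cdot(\Aa\xi\varphi^{n-1}_\xi) + \xi\cdot r^{n-1}_\xi,
\end{equation*}
with $\varphi\in\{\phi,\tilde\phi\}$ and $r^{n-1}_\xi$ being the contraction of $q^{n-1,0}$ or $\tilde q^{n-1,0}$ against $\xi^{\otimes(n-1)}$. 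Unpacking the definitions, $\tilde q^{n-1,0}$ differs from $q^{n-1,0}$ only by the presence of the additional term $-\sigma^{n-2,0}_{j_1\ldots j_{n-2}}\ee_{j_{n-1}}$. Its contribution to $\xi\cdot\tilde r^{n-1}_\xi$ is proportional to
\begin{equation*}
(\sigma^{n-2,0}_{j_1\ldots j_{n-2}})_{\alpha\,j_{n-1}}\,\xi_\alpha\xi_{j_{n-1}}\,\xi_{j_1}\ldots\xi_{j_{n-2}},
\end{equation*}
which vanishes since $\sigma^{n-2,0}_{j_1\ldots j_{n-2}}$ is a skew-symmetric matrix and is therefore annihilated by contraction with the symmetric tensor $\xi\otimes\xi$ in its two matrix indices. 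Combining this cancellation with the inductive hypothesis (applied to $\varphi^{n-1}_\xi$, $\varphi^{n-2}_\xi$ and the homogenized tensor $\bar\Aa^{n-1,0}$), the right-hand sides of the two elliptic equations coincide; by uniqueness of the periodic solution of zero average, we infer $\phi^n_\xi=\tilde\phi^n_\xi$.

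The statement on homogenized coefficients then follows by directly contracting the defining expressions of $\bar\Aa^{n,0}$ and $\tilde\Aa^{n,0}$ with $\xi^{\otimes(n-1)}$ applied to $\xi$: both reduce to $\expec{\Aa(\nabla\varphi^n_\xi+\xi\varphi^{n-1}_\xi)}$, which agree by the corrector identity just established. Notice that no flux correctors enter this definition, so no further skew-symmetry argument is needed here.

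The only real obstacle is bookkeeping of indices: one must carefully separate the tensorial indices $j_1,\ldots,j_n$ from the two matrix indices of $\sigma^{n-2,0}$, and verify that the defining Poisson equation for $\sigma^{n-2,0}$ (with right-hand side $\nabla\times\tilde q^{n-2,0}$) indeed yields a genuinely skew-symmetric matrix field in those two indices. Once this is established (cf.~the analogous elliptic construction in \cite{DO1}), the cancellation above is purely algebraic and the induction proceeds smoothly.
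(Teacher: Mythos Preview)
Your proposal is correct and follows essentially the same route as the paper's proof: an induction on $n$ where the key step is observing that the extra term $\ee_{j_n}\cdot\sigma^{n-2,0}_{j_1\ldots j_{n-2}}\ee_{j_{n-1}}$ distinguishing the two flux definitions is annihilated upon symmetrization of the indices $j_{n-1},j_n$ (your contraction with $\xi^{\otimes n}$ is exactly this symmetrization), after which uniqueness of the periodic mean-zero solution closes the induction. The paper phrases this by rewriting the equation for $\phi^{n,0}$ with an auxiliary flux $\hat q^{n-1,0}$ and then ``symmetrizing indices'' to drop the $\sigma^{n-2,0}$ contribution, deferring the construction of the skew-symmetric $\sigma^{n,0}$ to \cite{GNO-reg,DO1} just as you do.
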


\begin{proof}
We refer to~\cite{GNO-reg} or~\cite{DO1} for the construction of the skew-symmetric flux corrector~$\sigma^{n,0}$, based on the compatibility condition $\nabla\cdot\tilde q^{n,0}_{j_1\ldots j_n}=0$,
and we now turn to the equivalence of $\phi^{n,0}$ and $\tilde\phi^{n,0}$.
Setting
\begin{equation}\label{e.sigma0-1}
\hat q_{j_1\ldots j_n}^{n,0}:=\Aa\big(\nabla\phi_{j_1\ldots j_n}^{n,0}+\phi_{j_1\ldots j_{n-1}}^{n-1,0}\ee_{j_n}\big)-\bar\Aa^{n,0}_{j_1\ldots j_{n-1}}\ee_{j_n}- \sigma^{n-1,0}_{j_1\ldots j_{n-1}}\ee_{j_n},
\end{equation}
the equation for $\phi^{n,0}$ in Definition~\ref{defi:BC} for $n\ge1$  can be written as
\begin{equation}\label{e.sigma0-2}
-\nabla\cdot\Aa\nabla\phi_{j_1\ldots j_n}^{n,0}\,=\,\nabla\cdot\big(\Aa\phi_{j_1\ldots j_{n-1}}^{n-1,0}\ee_{j_n}\big)
+\ee_{j_n}\cdot\big( \hat q^{n-1,0}_{j_1\ldots j_{n-1}}+ \sigma_{j_1\ldots j_{n-2}}^{n-2,0}\ee_{j_{n-1}}\big).
\end{equation}
Symmetrizing indices $j_1,\ldots,j_n$, the skew-symmetry of $\sigma^{n-2,0}$ allows us to drop the corresponding right-hand side term, and we may conclude by induction that $\phi^{n,0}$ coincides with its modified version $\tilde\phi^{n,0}$ up to symmetrization as stated.
\end{proof}

An iterative use of the Poincaré inequality on the unit cell $Q$ ensures the well-posedness of the above objects and further provides the following a priori estimates.

\begin{lem}\label{lem:per-cor}
In the periodic setting, the above quantities $\{\phi^{n,m},\sigma^{n,m},\bar\Aa^{n,m})_{n,m}$ are uniquely defined and satisfy for all $n,m\ge0$,
\[\|(\phi^{n,m},\sigma^{n,m})\|_{H^1(Q)}+|\bar\Aa^{n,m}|\,\le\, C^{n+m}.\qedhere\]
\end{lem}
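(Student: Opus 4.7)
The plan is to establish existence, uniqueness, and the exponential bound by a single double induction. Because the flux $q^{n,m}$ couples the hierarchy across different values of $n$ through the term $\phi^{n+1,m-2}_{j_1\ldots j_n j}\ee_j$, I cannot simply induct on $n+m$; instead I will induct primarily on $m$ (which guarantees that $m-2$ is a strictly lower stratum) and secondarily on $n$ within each fixed $m$. The vanishing of all quantities for odd $m$ follows from the same induction: assuming $\phi^{\cdot,m-2}\equiv 0$ for odd $m-2$ (trivially true for $m-2\le -1$), the right-hand side of the equation defining $\phi^{n,m}$ vanishes identically, and hence so do $\phi^{n,m}$, $q^{n,m}$, $\bar\Aa^{n,m}$, and $\sigma^{n,m}$.

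For the inductive step at even $m\ge 0$ and $n\ge 1$, I first verify the Fredholm solvability condition for $\phi^{n,m}$: the periodic average of the right-hand side must vanish. The divergence term averages to zero by periodicity, so it suffices to check that $\expec{q^{n-1,m}}=0$; this follows directly from the definition of $\bar\Aa^{n-1,m}$ combined with the normalization $\expec{\phi^{n,m-2}}=0$, both belonging to strata below the current one. The Lax--Milgram lemma applied on $\{v\in H^1_\per(Q):\expec{v}=0\}$, together with uniform ellipticity of $\Aa$, then yields a unique $\phi^{n,m}$ satisfying
\[
\|\phi^{n,m}\|_{H^1(Q)}\,\lesssim\, \|\phi^{n-1,m}\|_{L^2(Q)}+\|q^{n-1,m}\|_{L^2(Q)},
\]
obtained by testing the equation with $\phi^{n,m}$ itself and appealing to the Poincar\'e inequality on~$Q$.

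Once $\phi^{n,m}$ is controlled, the tensor $\bar\Aa^{n,m}$ and the flux $q^{n,m}$ are given by explicit algebraic formulas, and their norms are dominated by $\|\phi^{n,m}\|_{H^1(Q)}$, $\|\phi^{n-1,m}\|_{L^2(Q)}$, and $\|\phi^{n+1,m-2}\|_{L^2(Q)}$, the last being available from an earlier $m$-stratum. The identity $\expec{q^{n,m}}=0$, enforced by the very choice of $\bar\Aa^{n,m}$, makes the Poisson equation for the auxiliary potential $\Phi^{n,m}$ uniquely solvable, and standard Calder\'on--Zygmund and Poincar\'e estimates then give $\|\sigma^{n,m}\|_{H^1(Q)}\lesssim\|q^{n,m}\|_{L^2(Q)}$. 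For the base layer $m=0$, the skew-symmetric flux corrector $\sigma^{n,0}$ is instead constructed via Lemma~\ref{lem:mod-cor} using the divergence-free property of $\tilde q^{n,0}$. Collecting all inequalities, each inductive step loses only a fixed multiplicative factor depending on $d$ and~$\lambda$, so that after $n+m$ iterations the cumulative constant is at most $C^{n+m}$.

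The main obstacle I foresee is the bookkeeping of the cross-coupling through $\phi^{n+1,m-2}$: if the induction is naively indexed by $n+m$, the definitions of $\phi^{n,m}$ and $q^{n,m}$ become circular, since $\phi^{n+1,m-2}$ lies outside the naive stratum but is needed to close the bound at level $n+m$. Stratifying by $m$ first and by $n$ second resolves this cleanly, since $m-2$ is always strictly below the current $m$-layer; once the ordering is fixed, all remaining points are routine periodic-cell estimates.
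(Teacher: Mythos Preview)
Your proof is correct and matches the paper's approach exactly: the paper simply says ``an iterative use of the Poincar\'e inequality on the unit cell $Q$ ensures the well-posedness of the above objects and further provides the following a priori estimates,'' and your argument is precisely the detailed implementation of this sketch.

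One minor comment: your stated obstacle --- that induction on $n+m$ would make the scheme circular because of the term $\phi^{n+1,m-2}$ --- is not actually an obstacle. The quantity $\phi^{n+1,m-2}$ sits at level $(n+1)+(m-2)=n+m-1$, strictly below the current stratum, so a straight induction on $n+m$ works just as well as your $(m,n)$-lexicographic scheme. Either ordering closes the recursion $a_{n,m}\lesssim a_{n-1,m}+a_{n-2,m}+a_{n,m-2}$ and yields the bound $C^{n+m}$. This is a harmless overstatement in your write-up and does not affect the correctness of the argument.
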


We conclude this paragraph with some important vanishing property of the higher-order hyperbolic homogenized coefficients, which extends the corresponding elliptic result~\cite[Lemma~2.3]{DO1}; see also~\cite[Proposition~1]{BG}, \cite[Theorem~2.13]{ALR}, and~\cite[Theorem~3.5]{Pouch-19}. The proof is postponed to Section~\ref{sec:even-coeff}.

\begin{prop}[Symmetry of homogenized coefficients]\label{prop:even-coeff}
For all $n\ge1$ and $m\ge0$, there holds for any $j_1,\ldots,j_{n+1}$,
\[\ee_{j_{n+1}}\cdot\bar\Aa^{n,m}_{j_1\ldots j_{n-1}}\ee_{j_n}\,=\,(-1)^{n+1}\ee_{j_1}\cdot\bar\Aa^{n,m}_{j_{n+1}\ldots j_{3}}\ee_{j_2}.\]
In particular, whenever $n$ is even, we have
\[\xi\cdot(\bar\Aa^{n,m}\odot\xi^{\otimes(n-1)})\xi=0,\qquad\text{for all $\xi\in\R^d$}.\qedhere\]
\end{prop}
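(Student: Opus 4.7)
The plan is to prove the reversal-symmetry identity by induction on $n\ge 1$, with the time-regularity $m$ handled by an outer induction; the second claim (vanishing of the full contraction for $n$ even) then follows immediately by contracting both sides against $\xi_{j_1}\cdots\xi_{j_{n+1}}$, since the monomial is invariant under index reversal while the prefactor becomes $-1$ for $n$ even, forcing $\xi\cdot(\bar\Aa^{n,m}\odot\xi^{\otimes(n-1)})\xi$ to equal its own opposite.

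For the base case $n=1$, the identity reduces to the symmetry of the matrix $\bar\Aa^{1,m}$. When $m=0$, $\bar\Aa^{1,0}$ is the classical elliptic homogenized matrix and its symmetry is the usual consequence of $\Aa^T=\Aa$. When $m\ge 2$ (necessarily even, as $\bar\Aa^{n,m}=0$ for $m$ odd), one has $\phi^{0,m}=0$, so the equation for $\phi^{1,m}_j$ in Definition~\ref{defi:BC} collapses to $-\nabla\cdot\Aa\nabla\phi^{1,m}_j = \ee_j\cdot q^{0,m} = -\phi^{1,m-2}_j$. Testing against $\phi^{1,m}_i$ and symmetrizing via $\Aa^T=\Aa$ yields the symmetry of $\bar\Aa^{1,m}$ modulo a duality identity of the form $\expec{\phi^{1,m}_i\phi^{1,m-2}_j}=\expec{\phi^{1,m}_j\phi^{1,m-2}_i}$, which is closed off by an outer induction on $m$.

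For the inductive step $n\ge 2$, I test the defining equation for $\phi^{n,m}_{j_1\ldots j_n}$ against $\phi^{1,m}_{j_{n+1}}$ and, conversely, the equation for $\phi^{1,m}_{j_{n+1}}$ against $\phi^{n,m}_{j_1\ldots j_n}$, and equate the two expressions for $\expec{\nabla\phi^{n,m}_{j_1\ldots j_n}\cdot\Aa\nabla\phi^{1,m}_{j_{n+1}}}$ provided by $\Aa^T=\Aa$. Expanding the fluxes $q^{n-1,m}$ and $q^{0,m}$ as in Definition~\ref{defi:BC} produces a contribution involving $\ee_{j_n}\cdot\bar\Aa^{n-1,m}_{j_1\ldots j_{n-2}}\ee_{j_{n-1}}$, to which the inductive hypothesis at order $n-1$ applies directly, together with residual terms involving $\phi^{n,m-2}$ and $\phi^{1,m-2}$ arising from the back-coupling $-\phi^{n+1,m-2}_{\cdot j}\ee_j$ present in $q^{n,m}$. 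These residuals are absorbed by the outer induction on $m$, whose base case $m=0$ is the elliptic setting treated in~\cite{DO1}, where the back-coupling is absent and a single induction on $n$ is enough.

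The principal obstacle is exactly this coupling across different values of $m$ produced by the $-\phi^{n+1,m-2}\ee_j$ term in $q^{n,m}$: in the purely elliptic case~\cite{DO1} it is not present, so a single induction on $n$ suffices. Here one must instead run a lexicographic induction on $(m,n)$ and, in order to keep the lower-order contributions tractable, strengthen the inductive statement to include companion duality identities of the form $\expec{\phi^{n,m}\phi^{p,m'-2}}=\expec{\phi^{p,m'}\phi^{n,m-2}}$ between correctors of complementary time regularity. These pair off exactly the residual terms generated by the above integration by parts and are proved in tandem with the main symmetry, so that at every stage of the induction all lower-order contributions either vanish by the previous step or recombine into a reduced instance of the same identity.
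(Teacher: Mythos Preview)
Your proposal has a genuine gap at the heart of the inductive step. When you test the equation for $\phi^{n,m}_{j_1\ldots j_n}$ against $\phi^{1,m}_{j_{n+1}}$, the term $\bar\Aa^{n-1,m}_{j_1\ldots j_{n-2}}$ coming from the flux $q^{n-1,m}$ is multiplied by $\expecm{\phi^{1,m}_{j_{n+1}}}=0$ and therefore drops out; more to the point, $\bar\Aa^{n,m}$ itself never appears in the resulting identity, since both test functions have vanishing average. So the identity you obtain by equating the two expressions for $\expec{\nabla\phi^{n,m}\cdot\Aa\nabla\phi^{1,m}}$ relates only quadratic corrector moments and does not touch the object whose symmetry you want to prove. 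The inductive reduction from $n$ to $n-1$ that you sketch simply does not take place.

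The paper's argument avoids this by first rewriting $\ee_j\cdot\bar\Aa^{n,m}_{i_1\ldots i_{n-1}}\ee_{i_n}$ using the equation for the \emph{first elliptic} corrector $\phi^{1,0}$ (not $\phi^{1,m}$), which yields
\[
\ee_j\cdot\bar\Aa^{n,m}_{i_1\ldots i_{n-1}}\ee_{i_n}=-\expec{\nabla\phi^{1,0}_j\cdot\Aa\nabla\phi^{n,m}_{i_1\ldots i_n}-\ee_j\cdot\Aa\phi^{n-1,m}_{i_1\ldots i_{n-1}}\ee_{i_n}}.
\]
It then proves a \emph{migration} identity (one application of which moves a single index from the $(n,m)$-factor to the $(p,0)$-factor at the cost of residuals involving $\phi^{\cdot,m-2}$), and iterates it $n$ times rather than invoking any inductive hypothesis on $n$. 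The accumulated residuals are alternating sums of the form $\sum_{k}(-1)^k\expec{\phi^{k,0}\phi^{n-k+1,m-2}}$; these are not governed by pairwise dualities of the type $\expec{\phi^{n,m}\phi^{p,m'-2}}=\expec{\phi^{p,m'}\phi^{n,m-2}}$ that you propose, but by a second migration identity that shifts the time index $q$ two at a time until the two factors carry balanced time regularity (either both $2m'$, or $2m'$ and $2m'-2$), at which point the alternating sum is manifestly symmetric under index reversal. Your strengthened companion statement is therefore not the one actually needed, and it is unclear how your lexicographic induction would close.
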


\subsection{Geometric two-scale expansion}
Given a smooth function $\bar w$, we consider its two-scale expansion associated with the above-defined hyperbolic correctors, as in~\eqref{eq:2scale-hyp},
\begin{equation}\label{eq:def-Fell}
H^\ell_\e[\bar w]\,:=\,\sum_{n=0}^\ell\sum_{m=0}^{\ell-n}\e^{n+m} \phi^{n,m}(\tfrac\cdot\e)\odot\nabla^n\partial_t^m\bar w,
\end{equation}
and we show that it is well-adapted to describe the local behavior of the solution to the hyperbolic equation in the following sense: as explained in~\eqref{eq:describe-osc-op}, the heterogeneous hyperbolic operator applied to $H^\ell_\e[\bar w]$ is equivalent to some higher-order effective operator applied to $\bar w$ (up to $O(\e^\ell)$ error terms).
The proof is postponed to Section~\ref{sec:form-hyperb}.

\begin{prop}[Geometric two-scale expansion]\label{prop:form-hyperb}
Let $\ell\ge1$, $\e>0$, and let $\bar w,f$ be smooth function{s} satisfying
\begin{equation}\label{eq:motiv-homog-hypeq}
\partial_t^{2}\bar w-\nabla\cdot\Big(\sum_{n=1}^{\ell}\sum_{m=0}^{\ell-n}\bar\Aa^{n,m}\odot(\e\nabla)^{n-1}(\e\partial_t)^m\Big)\nabla\bar w\,=\,f.
\end{equation}
Then, the associated geometric two-scale expansion of order $\ell$, defined in~\eqref{eq:def-Fell}, satisfies the following relation in the distributional sense in $\R\times\R^d$,
\begin{multline*}
\big(\partial_t^2-\nabla\cdot\Aa(\tfrac\cdot\e)\nabla\big)H^\ell_\e[\bar w]\\
=\,
f
-\e^{\ell}\sum_{n=0}^{\ell}\nabla\cdot\Big(\big(\Aa\phi_{j_1\ldots j_{n}}^{n,\ell-n}-\sigma^{n,\ell-n}_{j_1\ldots j_{n}}\big)(\tfrac\cdot\e)\nabla\nabla^{n}_{j_1\ldots j_{n}}\partial_t^{\ell-n}\bar w\Big)\hspace{3.5cm}
\\
+\e^\ell\sum_{n=1}^\ell\partial_t\Big(\phi^{n,\ell-n}(\tfrac\cdot\e)\odot\nabla^n\partial_t^{\ell+1-n}\bar w
-\sigma^{n-1,\ell+1-n}_{j_1\ldots j_{n-1}}(\tfrac\cdot\e):\nabla^2\nabla^{n-1}_{j_1\ldots j_{n-1}}\partial_t^{\ell-n}\bar w\Big).\qedhere
\end{multline*}
\end{prop}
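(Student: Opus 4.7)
The plan is to compute $(\partial_t^2-\nabla\cdot\Aa(\tfrac\cdot\e)\nabla)H^\ell_\e[\bar w]$ by Leibniz, plug in the defining corrector equations, and re-organize everything into (i) the ``bulk'' contribution that reproduces the formal homogenized operator applied to~$\bar w$ (which equals $f$ by~\eqref{eq:motiv-homog-hypeq}), plus (ii) two ``boundary'' contributions at total order $\ell$ (spatial) and $\ell+1$ (temporal) for which the flux correctors $\sigma^{n,m}$ are designed to produce the claimed total-derivative structure.

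First, by rescaling space and time I reduce to $\e=1$ (correctors evaluated at~$\frac{\cdot}{\e}$ become correctors, every spatial/time derivative that lands on $\bar w$ picks up an $\e$, and the final answer is re-obtained by substitution). Then, using the product identity $-\nabla\cdot\Aa\nabla(hg) = (-\nabla\cdot\Aa\nabla h)g-\nabla\cdot(\Aa h)\cdot\nabla g-(\Aa\nabla h)\cdot\nabla g-(h\Aa):\nabla^2 g$ termwise on~\eqref{eq:def-Fell}, and inserting the corrector equation from Definition~\ref{defi:BC},
\[
-\nabla\cdot\Aa\nabla\phi^{n,m}_{j_1\ldots j_n} \;=\;\nabla\cdot(\Aa\phi^{n-1,m}_{j_1\ldots j_{n-1}}\ee_{j_n})+\ee_{j_n}\cdot q^{n-1,m}_{j_1\ldots j_{n-1}},
\]
pairs of adjacent terms in~$n$ telescope: the divergence contribution from order $n$ cancels the $-\nabla\cdot(\Aa\phi^{n,m}\ee_{j_{n+1}})\nabla^{n+1}\partial_t^m\bar w$ produced by the preceding order, leaving only $\ee_{j_n}\cdot q^{n-1,m}_{j_1\ldots j_{n-1}}\nabla^n_{j_1\ldots j_n}\partial_t^m\bar w$ and the two remainder terms $(\ee_{j_{n+1}}\cdot\Aa\nabla\phi^{n,m})\nabla^{n+1}\partial_t^m\bar w$ and $(\ee_{j_{n+2}}\cdot\Aa\phi^{n,m}\ee_{j_{n+1}})\nabla^{n+2}\partial_t^m\bar w$.

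Next, I substitute the identity
\[
\Aa\big(\nabla\phi^{n,m}_{j_1\ldots j_n}+\phi^{n-1,m}_{j_1\ldots j_{n-1}}\ee_{j_n}\big)\;=\;q^{n,m}_{j_1\ldots j_n}+\phi^{n+1,m-2}_{j_1\ldots j_n j}\ee_j+\bar\Aa^{n,m}_{j_1\ldots j_{n-1}}\ee_{j_n}
\]
which follows from the definition of $q^{n,m}$. The $\phi^{n+1,m-2}$ contribution is the key: combined with the $\partial_t^2$ acting on $\phi^{n-1,m+2}\odot\nabla^{n-1}\partial_t^{m+2}\bar w$ from the time part of $H^\ell_\e[\bar w]$, it telescopes in the $m$ direction, with only the endpoints ($m=0$ for correctors, and the top shell $n+m=\ell$, $\ell+1$) surviving. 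The $\bar\Aa^{n,m}$ contribution, gathered across all $(n,m)$ with $n+m\le\ell-1$, reconstructs exactly the differential operator $-\nabla\cdot(\sum_{n,m}\bar\Aa^{n,m}\odot\nabla^{n-1}\partial_t^m)\nabla$ acting on $\bar w$, so that together with $\partial_t^2\bar w$ this bulk contribution equals $f$ by~\eqref{eq:motiv-homog-hypeq}. The residual terms that do not telescope are precisely those indexed by $n+m\in\{\ell,\ell+1\}$.

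Finally, I rewrite the spatial boundary terms ($n+m=\ell$) as total divergences by using that $\sigma^{n,\ell-n}$ from Definition~\ref{def:cocor} satisfies $\nabla\cdot\sigma^{n,\ell-n}_{j_1\ldots j_n}=q^{n,\ell-n}_{j_1\ldots j_n}$ (after the usual integration-by-parts trick $\ee_{j_{n+1}}\cdot q^{n,m}_{j_1\ldots j_n}\nabla^{n+1}\partial_t^m\bar w=\nabla\cdot(\sigma^{n,m}_{j_1\ldots j_n}\nabla\nabla^n_{j_1\ldots j_n}\partial_t^m\bar w)+\sigma^{n,m}_{j_1\ldots j_n}:\nabla^2\nabla^n_{j_1\ldots j_n}\partial_t^m\bar w$, where for $m=0$ Lemma~\ref{lem:mod-cor} guarantees skew-symmetry and hence $\sigma:\nabla^2=0$ for that specific top-order term; for the mixed cases $m\ge 1$ the symmetric piece lives at order $\ell+1$ and is absorbed into the temporal boundary). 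The temporal boundary terms ($n+m=\ell+1$) are reorganized as $\partial_t$ of the expected expression, using that the purely spatial derivatives brought by the $\phi^{n+1,m-2}$ telescoping machinery at the top shell combine with the incoming $\nabla^{n+1}\partial_t^{m-1}\bar w$ from the earlier simplification. Re-scaling back to general $\e$ reintroduces the prefactor $\e^\ell$ in front of both boundary sums, giving the stated identity.

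The main obstacle is combinatorial: the two telescopings (in $n$ via the elliptic cell equation and in $m$ via the $\phi^{n+1,m-2}\ee$ term in $q^{n,m}$) run simultaneously and must be tracked together, and the precise index range of the surviving boundary terms must match the statement. Care is also needed to confirm, via Lemma~\ref{lem:mod-cor}, that the choice of $\sigma^{n,0}$ really produces a total divergence for the pure spatial shell $n+0=\ell$, and that for $m\ge 1$ only Definition~\ref{def:cocor}'s (non-skew) $\sigma^{n,m}$ is needed, with the symmetric residue reappearing harmlessly at the adjacent time shell.
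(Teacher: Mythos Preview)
Your plan is correct and follows essentially the same route as the paper: reduce to $\e=1$, expand by Leibniz, insert the corrector equations to telescope in $n$, use the $q^{n,m}$ identity (and the $\phi^{n+1,m-2}$ term it contains) to telescope in $m$, reconstruct the homogenized operator to obtain $f$, and then handle the surviving top-shell terms via the flux correctors $\sigma^{n,m}$, invoking Lemma~\ref{lem:mod-cor} for the skew-symmetry of $\sigma^{\ell,0}$. One small point of precision: for $m\ge1$ the flux corrector of Definition~\ref{def:cocor} satisfies $\nabla_j(\sigma^{n,m})_{jk}=-q^{n,m}_k$ (divergence in the \emph{first} index, with a sign), not literally $\nabla\cdot\sigma=q$; and the residual $\sigma^{n,\ell-n}:\nabla^2\nabla^n\partial_t^{\ell-n}\bar w$ for $n\le\ell-1$ is not ``at order $\ell+1$'' but is itself a pure time derivative (since $\ell-n\ge1$), which is exactly how it gets recast as the $\partial_t(\cdots)$ term in the statement.
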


\begin{rem}\label{rem:flux-corr}
Note that the last two right-hand side terms in the above equation for the two-scale expansion $H_\e^\ell[\bar w]$ are total derivatives (with respect to time or space):
this is not a trivial fact for the last term, {as it is based on the possibility of constructing skew-symmetric flux correctors $\{\sigma^{n,0}\}_n$}. This happens to be crucial when applying Lemma~\ref{lem:apriori} in order to deduce an optimal $\Ld^2$ error estimate. This slightly refines the analysis of~\cite{ALR}.
\end{rem}

\subsection{Homogenized equations and secular growth problem}\label{sec:homog-eqn/hyp}
As motivated in Proposition~\ref{prop:form-hyperb}, cf.~\eqref{eq:motiv-homog-hypeq}, we consider the following formal homogenized equation, for $\ell\ge1$,
\begin{equation}\label{eq:homog-lim}
\left\{\begin{array}{ll}
\partial_t^2\bar W^\ell_\e-\nabla\cdot\big(\sum_{n=1}^\ell\sum_{m=0}^{\ell-n}\bar\Aa^{n,m}\odot(\e\nabla)^{n-1}(\e\partial_t)^m\big)\nabla \bar W^\ell_\e = f,&\text{in $\R\times\R^d$},\\
\bar W_\e^\ell=f=0,&\text{for $t<0$}.
\end{array}\right.
\end{equation}
However, this equation mixes higher-order space and time derivatives, and its well-posedness is problematic.
To avoid the secular growth problem described in Section~\ref{sec:discuss}, we follow~\cite{ALR}
and first show that the differential operator in~\eqref{eq:homog-lim} can be rewritten in such a way that it does no longer mix space and time derivatives. This is achieved by iteratively using the equation to eliminate time derivatives up to higher-order terms, which is referred to as the `criminal method' in~\cite{ALR}.
The proof is postponed to Section~\ref{sec:proof-reform}.
Note that the new homogenized coefficients $\{\bar\bb^n\}_n$ in this reformulation
automatically coincide with the coefficients given by the spectral approach: this can for instance be deduced a posteriori by comparing the corresponding homogenization results, Theorems~\ref{th:main-per} and~\ref{th:main-per2}; this allows us to use already here the same notation $\{\bar\bb^n\}_n$ as for spectral homogenized coefficients.
Note that by definition $\bar\bb^1=\bar\Aa^{1,0}=\bar\Aa$.

\begin{lem}[Revamped homogenized equation]\label{lem:reform}
Given $\ell\ge1$ and $\e>0$, if $\bar W_\e^\ell,f$ are smooth and satisfy the formal homogenized equation~\eqref{eq:homog-lim}, then we have
\[\partial_t^2\bar W_\e^\ell-\nabla\cdot\Big(\sum_{n=1}^\ell \bar\bb^{n}\odot(\e\nabla)^{n-1}\Big)\nabla\bar W_\e^\ell
\,=\, f+\e^2\nabla\cdot\Big(\sum_{n=1}^{\ell-2}\bar\cc^n\odot(\e D)^{n-1}\Big)\nabla f+\nabla \cdot E_\e^\ell,\]
where the coefficients $\{\bar\bb^n,\bar\cc^n\}_n$ and the remainder $E_\e^\ell$ are as follows:
\begin{enumerate}[$\bullet$]
\item We define the matrix-valued symmetric tensor $\bar\bb^p:=(\bar\bb^p_{j_1\ldots j_{p-1}})_{1\le j_1,\ldots,j_{p-1}\le d}$ for $p\ge1$
such that for all $\xi\in\R^d$,
\[\xi\cdot(\bar\bb^p\odot\xi^{\otimes(p-1)})\xi\,:=\,\sum_{k\ge1}\,\sum_{(m_1,\ldots,m_k)\in I_k}\sum_{n_1,\ldots, n_k\ge1\atop k+|n|=p+1}\prod_{j=1}^k\xi\cdot\big(\bar\Aa^{n_j,m_j}\odot\xi^{\otimes (n_j-1)}\big)\xi,\]
in terms of the index set
\[I_k\,:=\,\Big\{m=(m_1,\dots,m_{k}):m_j\ge0~\forall j,~\sum_{j=1}^sm_j\ge 2s~\forall s<k,~|m|=2(k-1)\Big\}.\]
\item We define the matrix-valued symmetric tensor $\bar\cc^p:=(\bar\cc^p_{j_1\ldots j_{p-1}})_{0\le j_1,\ldots,j_{p-1}\le d}$ for all $p\ge1$ such that for all $\hat\xi=(\xi_0,\xi)\in\R\times\R^d$,
\[\quad\xi\cdot(\bar\cc^p\odot\hat\xi^{\otimes(p-1)})\xi:=\sum_{k\ge1}\sum_{(m_1,\ldots,m_k)\in J_k}\sum_{n_1,\ldots,n_k\ge1\atop|n|+|m|=p+k+1}\xi_0^{|m|-2k}\prod_{j=1}^k\xi\cdot\big(\bar\Aa^{n_j,m_j}\odot\xi^{\otimes(n_j-1)}\big)\xi,\]
in terms of the index set
\[J_k\,:=\,\Big\{m=(m_1,\dots,m_{k}):m_j\ge0~\forall j,\,\sum_{j=1}^sm_j\ge 2s~\forall s\le k\Big\}.\]
\item For $\e\ll1$ small enough, the error term $E_\e^\ell$ satisfies pointwise, for all $r\ge0$,
\[|\langle D\rangle^r E_\e^\ell|\,\le\,(\e C \ell)^\ell\Big(\big|\langle D\rangle^{r+\ell}\langle  \e C D\rangle^{\ell^2}D\bar W_\e^\ell\big|+
\big|\langle D\rangle^{r+\ell-1}\langle \e C D\rangle^{\ell^2} f\big|\Big).\]
\end{enumerate}
In particular, Lemma~\ref{lem:per-cor} implies for all $p\ge1$,
\begin{equation}\label{eq:growthrevamped}
|\bar{\bb}^p|+|\bar{\cc}^p|\leq C^p,
\end{equation}
and Proposition~\ref{prop:even-coeff} entails, whenever $p$ is an even integer,
\[\xi\cdot (\bar\bb^p\odot \xi^{\otimes(p-1)})\xi=0,\qquad\text{for all $\xi\in\R^d$.}\qedhere\]
\end{lem}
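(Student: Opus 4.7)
The plan is to implement the ``criminal'' substitution of~\cite{ALR,Pouch-19} in a clean algorithmic form. I would split the operator in~\eqref{eq:homog-lim} into its purely spatial ($m=0$) and mixed ($m\ge2$, since odd $m$ are suppressed by Definition~\ref{defi:BC}) parts, and rewrite the equation as
\[
\partial_t^2\bar W_\e^\ell \,=\, L_\e^{\mathrm{sp}}\bar W_\e^\ell + L_\e^{\mathrm{mix}}\bar W_\e^\ell + f,\qquad L_\e^{\mathrm{sp}}\,:=\,\nabla\cdot\Big(\textstyle\sum_{n=1}^\ell\bar\Aa^{n,0}\odot(\e\nabla)^{n-1}\Big)\nabla.
\]
The core step is then to recursively re-inject this identity inside the mixed part: for each monomial $(\e\partial_t)^{m_j}$ with $m_j\ge 2$ appearing on the right, use $(\e\partial_t)^{m_j}\bar W_\e^\ell=\e^2(\e\partial_t)^{m_j-2}(L_\e^{\mathrm{sp}}\bar W_\e^\ell+L_\e^{\mathrm{mix}}\bar W_\e^\ell+f)$ and expand. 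Each iteration consumes two time derivatives from the inner $\bar W_\e^\ell$ and either attaches a new operator $\bar\Aa^{n_{j+1},m_{j+1}}$ to the running composition (if the $L_\e$ branch is chosen) or terminates the substitution by hitting~$f$.

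Enumerating the outcomes yields ordered chains $(n_1,m_1),\dots,(n_k,m_k)$, each contributing a compound operator of $\e$-order $|n|+|m|-k$. The key combinatorial identity is that the number of time derivatives still pending on the inner $\bar W_\e^\ell$ after $s$ substitutions, namely $\sum_{j=1}^{s+1}m_j-2s$ as long as the chain is growing, must be $\ge 2$ in order to perform the next substitution. A chain terminates on $\bar W_\e^\ell$ (contributing to $\bar\bb^p$) exactly when this number reaches $0$, forcing $|m|=2(k-1)$ together with $\sum_{j=1}^{s}m_j\ge 2s$ for $s<k$, which is the definition of $I_k$; a chain terminates by hitting $f$ (contributing to $\bar\cc^p$) with $|m|-2k$ residual time derivatives on~$f$, captured by the factor $\xi_0^{|m|-2k}$ in the formula, and the analogous admissibility condition for all $s\le k$ defines $J_k$. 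Matching the $\e$-order $|n|+|m|-k$ to the homogeneity of $\bar\bb^p\odot(\e\nabla)^{p-1}$ (giving $k+|n|=p+1$) and to that of $\e^2\bar\cc^p\odot(\e D)^{p-1}$ (giving $|n|+|m|=p+k+1$) then fixes the constraints and yields the announced formulas. Vanishing of $\bar\bb^p$ for $p$ even follows from Proposition~\ref{prop:even-coeff}: nonzero chains force each $n_j$ odd, so $k+|n|\equiv 2k\equiv 0\pmod 2$, contradicting $k+|n|=p+1$ odd.

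The remainder $E_\e^\ell$ gathers all chains that overshoot $\e$-order $\ge\ell$. Since each substitution raises the $\e$-order by at least $2$, truncation happens after $\lesssim \ell/2$ iterations; the bound~\eqref{eq:growthrevamped} and the stated estimate on $\langle D\rangle^r E_\e^\ell$ then follow by combining Lemma~\ref{lem:per-cor}'s bound $|\bar\Aa^{n,m}|\le C^{n+m}$ with the $(C\ell)^\ell$-type counting of admissible chains of length $\le\ell$; the factor $\langle D\rangle^{r+\ell}$ records the intrinsic differential order of one compound operator of length~$\ell$, while the extra $\langle\e CD\rangle^{\ell^2}$ absorbs the up to $\ell^2$ additional $\e\nabla,\e\partial_t$ factors carried by deeply nested chains whose outer blocks have not yet been simplified. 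The main obstacle is purely bookkeeping: I expect the delicate step to be checking that iterating the substitution really produces exactly the sums over $I_k$ and $J_k$ with no miscount, in particular that the admissibility constraints on partial sums of $m_j$ are sharp at each intermediate stage; a related subtlety is to verify that the coefficients $\bar\bb^p$ and $\bar\cc^p$ so obtained are independent of the truncation parameter~$\ell$, which amounts to checking that enlarging $\ell$ only enriches $E_\e^\ell$ without shifting the main-order terms.
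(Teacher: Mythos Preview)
Your proposal is correct and follows essentially the same route as the paper: the proof there formalizes the recursive re-injection as an explicit inductive identity (their~\eqref{eq:claim_onlyspatial}), then reads off $\bar\bb^p,\bar\cc^p$ from the chains terminating on $\bar W_\e^\ell$ and on $f$ exactly via the index sets $I_k,J_k$ you describe, and truncates at $p=\lceil\ell/2\rceil$. The two subtleties you flag --- that the partial-sum constraints on $m$ are sharp at each stage, and that the resulting $\bar\bb^p,\bar\cc^p$ are independent of~$\ell$ --- are precisely the points the paper checks explicitly (the latter via the observation that $|m|=2(k-1)$ and $k+|n|=p+1$ automatically enforce $m_j+n_j\le p$, so the cutoff $m_j\le\ell-n_j$ is inactive).
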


The above thus leads us to considering the following modified version of the formal effective equation~\eqref{eq:homog-lim},
\begin{equation}\label{eq:homog-lim/ref}
\left\{\begin{array}{ll}
\partial_t^2\bar V^\ell_\e-\nabla\cdot\big(\bar\Aa+\sum_{k=2}^\ell\bar\bb^k\odot(\e\nabla)^{k-1}\big)\nabla\bar V_\e^\ell&\\
\hspace{3cm}=f+\e^2\nabla\cdot\big(\sum_{k=1}^{\ell-2}\bar\cc^k\odot(\e D)^{k-1}\big)\nabla f,&\text{in $\R\times\R^d$},\\
\bar V_\e^\ell=f=0,&\text{for $t<0$}.
\end{array}\right.
\end{equation}
As with the spectral approach, the symbol of the operator $-\nabla\cdot\big(\bar\Aa+\sum_{k=2}^\ell\bar\bb^k\odot(\e\nabla)^{k-1}\big)\nabla$ lacks positivity, so this equation is ill-posed in general.
To cure this issue, we argue exactly as in Section~\ref{sec:homog-eqn/spec}: we can consider several well-posed high{er}-order modifications of this equation, either by high-frequency filtering, by high{er}-order regularization, or by the Boussinesq trick, and we denote by $\bar v_\e^{\text{(I)},\ell},\bar v_\e^{\text{(II)},\ell},\bar v_\e^{\text{(III)},\ell}$ the corresponding solutions, respectively.
We refer to Lemma~\ref{lem:apriori-baru-sp} for the details (up to replacing the impulse $f$ in~\eqref{eq:homog-lim-sp} by the specific right-hand side in~\eqref{eq:homog-lim/ref}).

Next, we show that the above modification procedure $\bar W_\e^\ell\mapsto\bar V_\e^\ell$ for the formal effective equation can be inverted: more precisely, the solution $\bar v_\e^{(\star),\ell}$ of any of the well-posed modifications of~\eqref{eq:homog-lim/ref} is an approximate solution of the formal effective equation~\eqref{eq:homog-lim} up to an~$O(\e^\ell)$ error.

\begin{lem}[Inversion procedure]\label{lem:invert}
Given $\ell\ge1$ and $\e>0$, if $\bar v_\e^{(\star),\ell}$  is the solution of one of the well-posed modifications of equation~\eqref{eq:homog-lim/ref} as given by Lemma~\ref{lem:apriori-baru-sp},
then we have
\[\partial_t^2\bar v^{(\star),\ell}_\e-\nabla\cdot\Big(\sum_{n=1}^\ell\sum_{m=0}^{\ell-n}\bar\Aa^{n,m}\odot(\e\nabla)^{n-1}(\e\partial_t)^m\Big)\nabla \bar v^{(\star),\ell}_\e = f+\nabla \cdot F_\e^{(\star),\ell},\]
where the error $\nabla \cdot F_\e^{(\star),\ell}$ satisfies for all $r\ge0$,
\begin{equation*}
\|\langle D\rangle^rF_{\e}^{(\star),\ell;t}\|_{\Ld^2(\R^d)}\,\le\,
(\e C\ell)^\ell
\|\langle D\rangle^{r+C\ell}\langle \e C D\rangle^{\ell^2}f \|_{\Ld^1((0,t);\Ld^2(\R^d))},
\end{equation*}
where the constant $C$ further depends on the choice of~$\alpha$ in case $(\star)=\operatorname{(I)}$.
\end{lem}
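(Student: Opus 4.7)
The plan is to read Lemma~\ref{lem:reform} not merely as a statement about solutions of~\eqref{eq:homog-lim}, but as a purely algebraic operator identity valid for arbitrary smooth functions. Indeed, the ``criminal substitution'' underlying its proof (iteratively replacing $\partial_t^2$ by $\nabla\cdot\bar\Aa\nabla+$ source terms, taking advantage of the combinatorial definitions of $\{\bar\bb^n\}_n$ and $\{\bar\cc^n\}_n$) only manipulates the two differential operators
\[
L^{(1)}_\e := \partial_t^2-\nabla\cdot\Big(\sum_{n=1}^\ell\sum_{m=0}^{\ell-n}\bar\Aa^{n,m}\odot(\e\nabla)^{n-1}(\e\partial_t)^m\Big)\nabla, \qquad L^{(2)}_\e := \partial_t^2-\nabla\cdot\Big(\bar\Aa+\sum_{k=2}^\ell\bar\bb^k\odot(\e\nabla)^{k-1}\Big)\nabla,
\]
in a combinatorial manner. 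Re-running the argument with $L^{(1)}_\e v$ playing the role that $f=L^{(1)}_\e \bar W_\e^\ell$ played in Lemma~\ref{lem:reform}, I expect to obtain the universal operator identity
\[
L^{(2)}_\e v \,=\, R(\e D)\,L^{(1)}_\e v + \nabla\cdot\mathcal{E}^\ell_\e[v], \qquad R(\e D):=1+\e^2\nabla\cdot\Big(\sum_{k=1}^{\ell-2}\bar\cc^k\odot(\e D)^{k-1}\Big)\nabla,
\]
valid for every smooth~$v$, with a remainder $\mathcal{E}^\ell_\e[v]$ satisfying the pointwise bound of Lemma~\ref{lem:reform} with $(\bar W_\e^\ell, f)$ replaced by $(v, L^{(1)}_\e v)$.

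The proof then proceeds by specialization and formal inversion. First, I apply the above identity to $v=\bar v_\e^{(\star),\ell}$. By the defining modified equation for $\bar v_\e^{(\star),\ell}$ (cf.~the adaptation of Lemma~\ref{lem:apriori-baru-sp} to the source $R(\e D)f$), the left-hand side equals $R(\e D)f + \nabla\cdot G_\e^{(\star),\ell}$, where $G_\e^{(\star),\ell}$ is the modification residual: a $\kappa_\ell(\e|\nabla|)^\ell\nabla\bar v_\e^{(\star),\ell}$-type term in case~$\operatorname{(II)}$, a high-frequency tail of the filter in case~$\operatorname{(I)}$, and the corresponding Boussinesq leftovers in case~$\operatorname{(III)}$. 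Subtracting yields
\[
R(\e D)\big(L^{(1)}_\e \bar v_\e^{(\star),\ell}-f\big) \,=\, \nabla\cdot G_\e^{(\star),\ell} - \nabla\cdot\mathcal{E}^\ell_\e[\bar v_\e^{(\star),\ell}].
\]
Since $R(\e D)=1+M_\e$ with $M_\e:=\e^2\nabla\cdot\sum_{k=1}^{\ell-2}\bar\cc^k\odot(\e D)^{k-1}\nabla$ a differential operator carrying an overall prefactor $\e^2$ and coefficients bounded by $C^k$ via~\eqref{eq:growthrevamped}, one inverts $R(\e D)$ to the required precision by a truncated Neumann series $\sum_{j=0}^{\lceil\ell/2\rceil}(-M_\e)^j$, modulo errors of order $\e^\ell$. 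Crucially, $M_\e$ itself commutes with $\nabla\cdot$ and has a divergence structure, so the right-hand side remains a spatial divergence $\nabla\cdot F_\e^{(\star),\ell}$ throughout, as required.

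For the quantitative estimate, the norm of $\mathcal{E}^\ell_\e[\bar v_\e^{(\star),\ell}]$ inherits the prefactor $(\e C\ell)^\ell$ and the loss $\langle \e CD\rangle^{\ell^2}$ directly from Lemma~\ref{lem:reform}, after inserting the a priori bounds of Lemma~\ref{lem:apriori-baru-sp}(ii) that control $\bar v_\e^{(\star),\ell}$ in terms of $\|\langle D\rangle^{C\ell}\langle \e CD\rangle^{\ell^2}f\|_{\Ld^1((0,t);\Ld^2)}$; each $G_\e^{(\star),\ell}$ is estimated case by case along the same Fourier-analytic lines as Step~5 of the proof of Lemma~\ref{lem:apriori-baru-sp}. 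The main obstacle I anticipate is the bookkeeping needed to preserve the spatial divergence structure through every step, in particular across the truncated Neumann inversion: only a bound on a divergence $\nabla\cdot F_\e^{(\star),\ell}$ with $F_\e^{(\star),\ell}\in\Ld^2$ translates into the sharp $\Ld^2$ two-scale error in Theorem~\ref{th:main-per2}. The fact that both $\mathcal{E}^\ell_\e[v]$ and each modification residual $G_\e^{(\star),\ell}$ emerge natively in divergence form, combined with $M_\e$ starting with a $\nabla\cdot$, is what makes the scheme work.
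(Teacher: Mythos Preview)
Your proposal is correct and follows essentially the same approach as the paper: the paper applies Lemma~\ref{lem:reform} with the pair $(\bar W_\e^\ell,f)$ replaced by $(\bar v_\e^{(\star),\ell},\,f_\e^{(\star),\ell}):=(\bar v_\e^{(\star),\ell},\,L_\e^{(1)}\bar v_\e^{(\star),\ell})$, which is exactly your ``universal operator identity'', then combines with the modified equation to obtain $R(\e D)(f_\e^{(\star),\ell}-f)=\nabla\cdot\hat E_\e^{(\star),\ell}$, and finally inverts $R(\e D)$ by the same truncated Neumann series you describe (denoted $\mathcal O_\e^\ell$ in the paper, with remainder $\mathcal H_\e^\ell$). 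The case-by-case treatment of the modification residuals $G_\e^{(\star),\ell}$ and the bookkeeping of the divergence structure match your outline precisely.
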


\begingroup\allowdisplaybreaks

\subsection{Proof of Theorem~\ref{th:main-per2}}
Applying Proposition~\ref{prop:form-hyperb} with $\bar w=\bar v^{(\star),\ell}_\e$,
appealing to Lemma~\ref{lem:invert}, and comparing with the solution $u_\e$ of the heterogeneous wave equation~\eqref{eq:hyperb}, we find
\begin{multline*}
\big(\partial_t^2-\nabla\cdot\Aa(\tfrac\cdot\e)\nabla\big)(u_\e-H^\ell_\e[\bar v_\e^{(\star),\ell}])\,=\,
- \nabla \cdot F_\e^{(\star),\ell}\\
+\e^{\ell}\sum_{n=1}^{\ell+1}\nabla\cdot\Big(\big(\Aa\phi_{j_1\ldots j_{n-1}}^{n-1,\ell-n+1}-\sigma^{n-1,\ell-n+1}_{j_1\ldots j_{n-1}}\big)(\tfrac\cdot\e)\nabla\nabla^{n-1}_{j_1\ldots j_{n-1}}\partial_t^{\ell-n+1}\bar v_\e^{(\star),\ell}\Big)\\
-\e^\ell\sum_{n=0}^{\ell}\partial_t \Big(\big(\phi^{n+1,\ell-n-1}_{j_1\ldots j_{n-1}ji}\ee_{j}\otimes\ee_{i}-\sigma^{n-1,\ell-n+1}_{j_1\ldots j_{n-1}}\big)(\tfrac\cdot\e):\nabla^2\nabla^{n-1}_{j_1\ldots j_{n-1}}\partial_t^{\ell-n}\bar v_\e^{(\star),\ell} \Big).
\end{multline*}
By the a priori estimates of~Lemma~\ref{lem:apriori}, using corrector estimates of Lemma~\ref{lem:per-cor}, and using the Sobolev embedding to estimate products with correctors as in~\eqref{eq:use-Sobolev-cor}, we get for $a>\frac d2$,
\begin{multline*}
\|u_\e^t-H_\e^\ell[\bar v_\e^{(\star),\ell;t}]\|_{\Ld^2(\R^d)}+\|D(u_\e^t-H_\e^\ell[\bar v_\e^{(\star),\ell;t}])\|_{\Ld^2(\R^d)}\\
\,\lesssim\,\|\langle D\rangle F_\e^{(\star),\ell}\|_{\Ld^1((0,t);\Ld^2(\R^d))}+(\e C)^\ell\|\langle D\rangle^{\ell+a+1}D\bar v_\e^{(\star),\ell}\|_{\Ld^1((0,t);\Ld^2(\R^d))}.
\end{multline*}
Combining {this bound} with the estimate of Lemma~\ref{lem:invert} on the remainder $F_\e^\ell$, and with the a priori estimates of Lemma~\ref{lem:apriori-baru-sp}(ii) for $\bar v_\e^{(\star),\ell}$, the conclusion follows.
\qed

\subsection{Proof of Proposition~\ref{prop:even-coeff}}\label{sec:even-coeff}
We split the proof into two steps.

\medskip
\step1 Migration process: proof that for all $n,m,p,q\ge0$,
\begin{multline}\label{eq:migrate}
\expec{\big(\nabla\phi^{p,q}_{j_1\ldots j_p}\cdot\Aa\nabla\phi^{n,m}_{i_1\ldots i_n}-\phi^{p-1,q}_{j_1\ldots j_{p-1}}\ee_{j_p}\cdot\Aa\phi^{n-1,m}_{i_1\ldots i_{n-1}}\ee_{i_n}\big)}\\
=-\expec{\big(\nabla\phi^{p+1,q}_{j_1\ldots j_{p}i_n}\cdot\Aa\nabla\phi^{n-1,m}_{i_1\ldots i_{n-1}}-\phi^{p,q}_{j_1\ldots j_{p}}\ee_{i_n}\cdot\Aa\phi^{n-2,m}_{i_1\ldots i_{n-2}}\ee_{i_{n-1}}\big)}\\
-\expec{\big(\phi^{p,q}_{j_1\ldots j_p}\phi^{n,m-2}_{i_1\ldots i_n}+\phi^{p+1,q-2}_{j_1\ldots j_pi_n}\phi^{n-1,m}_{i_1\ldots i_{n-1}}\big)},
\end{multline}
and in addition, for all $n,m,q\ge0$,
\begin{multline}\label{eq:migrate-re}
\sum_{k=1}^n(-1)^k\expec{\phi^{k,q}_{ji_n\ldots i_{n-k+2}}\phi^{n-k+1,m}_{i_1\ldots i_{n-k+1}}}=\sum_{k=1}^{n}(-1)^k\expec{\phi^{k,q+2}_{ji_n\ldots i_{n-k+2}}\phi^{n-k+1,m-2}_{i_1\ldots i_{n-k+1}}}\\
+(-1)^n\expec{\nabla\phi^{n,q+2}_{ji_n\ldots i_{2}}\cdot\Aa\nabla\phi^{0,m}\ee_{i_1}+\phi^{n-1,q+2}_{ji_n\ldots i_{3}}\ee_{i_{2}}\cdot\Aa\phi^{0,m}\ee_{i_{1}}}.
\end{multline}
The equation for $\phi^{n,m}$ in Definition~\ref{defi:BC} yields
\begin{multline*}
\expec{\nabla\phi^{p,q}_{j_1\ldots j_{p}}\cdot\Aa\nabla\phi^{n,m}_{i_1\ldots i_n}}\,=\,-\expec{\nabla\phi^{p,q}_{j_1\ldots j_{p}}\cdot\Aa\phi^{n-1,m}_{i_1\ldots i_{n-1}}\ee_{i_n}}\\
+\expec{\phi^{p,q}_{j_1\ldots j_{p}}\ee_{i_n}\cdot\Aa\big(\nabla\phi^{n-1,m}_{i_1\ldots i_{n-1}}+\phi^{n-2,m}_{i_1\ldots i_{n-2}}\ee_{i_{n-1}}\big)}
-\expec{\phi^{p,q}_{j_1\ldots j_{p}}\phi^{n,m-2}_{i_1\ldots i_n}},
\end{multline*}
while the equation for $\phi^{{p+1},q}$ leads to
\begin{multline*}
\expec{\phi^{p,q}_{j_1\ldots j_p}\ee_{i_n}\cdot\Aa\nabla\phi^{n-1,m}_{i_1\ldots i_{n-1}}}\,=\,-\expec{\nabla\phi^{p+1,q}_{j_1\ldots j_pi_n}\cdot\Aa\nabla\phi^{n-1,m}_{i_1\ldots i_{n-1}}}\\
+\expec{\big(\nabla\phi^{p,q}_{j_1\ldots j_p}+\phi^{p-1,q}_{j_1\ldots j_{p-1}}\ee_{j_p}\big)\cdot\Aa\phi^{n-1,m}_{i_1\ldots i_{n-1}}\ee_{i_n}}
-\expec{\phi^{p+1,q-2}_{j_1\ldots j_pi_n}\phi^{n-1,m}_{i_1\ldots i_{n-1}}}.
\end{multline*}
Summing these two identities, the claim~\eqref{eq:migrate} easily follows.
Next, using~\eqref{eq:migrate} in form of
\begin{multline*}
\expec{\phi^{k,q}_{ji_n\ldots i_{n-k+2}}\phi^{n-k+1,m}_{i_1\ldots i_{n-k+1}}}=-\expec{\phi^{k-1,q+2}_{ji_n\ldots i_{n-k+3}}\phi^{n-k+2,m-2}_{i_1\ldots i_{n-k+2}}}\\
-\expec{\big(\nabla\phi^{k-1,q+2}_{ji_n\ldots i_{n-k+3}}\cdot\Aa\nabla\phi^{n-k+2,m}_{i_1\ldots i_{n-k+2}}-\phi^{k-2,q+2}_{ji_n\ldots j_{n-k+4}}\ee_{j_{n-k+3}}\cdot\Aa\phi^{n-k+1,m}_{i_1\ldots i_{n-k+1}}\ee_{i_{n-k+2}}\big)}\\
-\expec{\big(\nabla\phi^{k,q+2}_{ji_n\ldots i_{n-k+2}}\cdot\Aa\nabla\phi^{n-k+1,m}_{i_1\ldots i_{n-k+1}}-\phi^{k-1,q+2}_{ji_n\ldots i_{n-k+3}}\ee_{i_{n-k+2}}\cdot\Aa\phi^{n-k,m}_{i_1\ldots i_{n-k}}\ee_{i_{n-k+1}}\big)},
\end{multline*}
and summing this identity for $1\le k\le n$, we find after straightforward simplifications
\begin{multline*}
\sum_{k=1}^n(-1)^k\expec{\phi^{k,q}_{ji_n\ldots i_{n-k+2}}\phi^{n-k+1,m}_{i_1\ldots i_{n-k+1}}}=-\sum_{k=1}^n(-1)^k\expec{\phi^{k-1,q+2}_{ji_n\ldots i_{n-k+3}}\phi^{n-k+2,m-2}_{i_1\ldots i_{n-k+2}}}\\
-(-1)^n\expec{\big(\nabla\phi^{n,q+2}_{ji_n\ldots i_{2}}\cdot\Aa\nabla\phi^{1,m}_{i_1}-\phi^{n-1,q+2}_{ji_n\ldots i_{3}}\ee_{i_{2}}\cdot\Aa\phi^{0,m}\ee_{i_{1}}\big)}.
\end{multline*}
Using the equation for $\phi^{1,m}$, the second claim~\eqref{eq:migrate-re} easily follows.

\medskip
\step2 Conclusion.\\
For $n\ge1$ and $m\ge0$, the definition of $\bar\Aa^{n,m}$ and the equation for $\phi^{1,0}$ yield
\[\ee_j\cdot\bar\Aa^{n,m}_{i_1\ldots i_{n-1}}\ee_{i_n}
\,=\,-\expec{\big(\nabla\phi^{1,0}_j\cdot\Aa\nabla\phi^{n,m}_{i_1\ldots i_{n}}-\ee_j\cdot\Aa\phi^{n-1,m}_{i_1\ldots i_{n-1}}\ee_{i_n}\big)}.\]
Iterating identity~\eqref{eq:migrate} then leads to
\begin{multline*}
\ee_j\cdot\bar\Aa^{n,m}_{i_1\ldots i_{n-1}}\ee_{i_n}\,=\,(-1)^n\expec{\big(\nabla\phi^{n,0}_{ji_n\ldots i_2}\cdot\Aa\nabla\phi^{1,m}_{i_1}-\phi^{n-1,0}_{ji_n\ldots i_3}\ee_{i_2}\cdot\Aa\phi^{0,m}\ee_{i_1}\big)}\\
-\sum_{k=1}^{n-1}(-1)^k\expec{\phi^{k,0}_{ji_n\ldots i_{n-k+2}}\phi^{n-k+1,m-2}_{i_1\ldots i_{n-k+1}}},
\end{multline*}
hence, using the equation for $\phi^{1,m}$,
\begin{multline}\label{eq:coeff-pre}
\ee_j\cdot\bar\Aa^{n,m}_{i_1\ldots i_{n-1}}\ee_{i_n}\,=\,(-1)^{n+1}\expec{\big(\nabla\phi^{n,0}_{ji_n\ldots i_2}+\phi^{n-1,0}_{ji_n\ldots i_3}\ee_{i_2}\big)\cdot\Aa\phi^{0,m}\ee_{i_1}}\\
-\sum_{k=1}^{n}(-1)^k\expec{\phi^{k,0}_{ji_n\ldots i_{n-k+2}}\phi^{n-k+1,m-2}_{i_1\ldots i_{n-k+1}}}.
\end{multline}
For $m=0$, this already yields the conclusion
\[\ee_j\cdot\bar\Aa^{n,0}_{i_1\ldots i_{n-1}}\ee_{i_n}\,=\,(-1)^{n+1}\expec{\big(\nabla\phi^{n,0}_{ji_n\ldots i_2}+\phi^{n-1,0}_{ji_n\ldots i_3}\ee_{i_2}\big)\cdot\Aa\ee_{i_1}}\,=\,(-1)^{n+1}\ee_{i_1}\cdot\bar\Aa^{n,0}_{ji_n\ldots i_3}\ee_{i_2}.\]
For $m\ge2$, identity~\eqref{eq:coeff-pre} rather takes the form
\begin{equation}\label{eq:ident-barAnm-touse}
\ee_j\cdot\bar\Aa^{n,m}_{i_1\ldots i_{n-1}}\ee_{i_n}\,=\,-\sum_{k=1}^{n}(-1)^k\expec{\phi^{k,0}_{ji_n\ldots i_{n-k+2}}\phi^{n-k+1,m-2}_{i_1\ldots i_{n-k+1}}},
\end{equation}
which we shall combine with an iterative use of identity~\eqref{eq:migrate-re}. More precisely, in case $m=4m'+2$ with an integer $m'\ge0$, iterating identity~\eqref{eq:migrate-re} (starting from $q=0$ with $m$ replaced by $m-2=4m'$), and recalling $\phi^{0,l}=0$ for $l\ge1$, we find
\begin{equation*}
\sum_{k=1}^n(-1)^k\expec{\phi^{k,0}_{ji_n\ldots i_{n-k+2}}\phi^{n-k+1,m-2}_{i_1\ldots i_{n-k+1}}}=\sum_{k=1}^{n}(-1)^k\expec{\phi^{k,2m'}_{ji_n\ldots i_{n-k+2}}\phi^{n-k+1,2m'}_{i_1\ldots i_{n-k+1}}},
\end{equation*}
and thus, combining this with~\eqref{eq:ident-barAnm-touse}, and replacing $k$ by $n-k+1$ in the sum,
\[\ee_j\cdot\bar\Aa^{n,m}_{i_1\ldots i_{n-1}}\ee_{i_n}\,=\,-\sum_{k=1}^{n}(-1)^k\expec{\phi^{k,2m'}_{ji_n\ldots i_{n-k+2}}\phi^{n-k+1,2m'}_{i_1\ldots i_{n-k+1}}}\,=\,(-1)^{n+1}\ee_{i_1}\cdot\bar\Aa^{n,m}_{ji_n\ldots i_{3}}\ee_{i_2}.\]
Similarly, in the case $m=4m'$ with integer $m'\ge1$, we find
\begin{eqnarray*}
\ee_j\cdot\bar\Aa^{n,m}_{i_1\ldots i_{n-1}}\ee_{i_n}
&=&-\sum_{k=1}^{n}(-1)^k\expec{\phi^{k,2m'}_{ji_n\ldots i_{n-k+2}}\phi^{n-k+1,2m'-2}_{i_1\ldots i_{n-k+1}}}\\
&=&-\sum_{k=1}^{n}(-1)^k\expec{\phi^{k,2m'-2}_{ji_n\ldots i_{n-k+2}}\phi^{n-k+1,2m'}_{i_1\ldots i_{n-k+1}}}\\
&=&(-1)^{n+1}\ee_{i_1}\cdot\bar\Aa^{n,m}_{ji_n\ldots i_{3}}\ee_{i_2},
\end{eqnarray*}
and the conclusion follows.
\qed

\subsection{Proof of Proposition~\ref{prop:form-hyperb}}\label{sec:form-hyperb}
We focus on the case $\e=1$ and drop it from all subscripts in the notation, while the final result is obtained after $\e$-rescaling.
Since the correctors~$\phi_{j_1\ldots j_{n}}^{n,m}$ do not depend on time, a direct calculation yields for all $n,m\ge0$,
\begin{multline*}
(\partial_t^2-\nabla\cdot\Aa\nabla)\big(\phi^{n,m}\odot\nabla^n\partial_t^m\bar w\big)\,=\,\phi^{n,m}\odot\nabla^n\partial_t^{m+2}\bar w\\
+(-\nabla\cdot\Aa\nabla\phi^{n,m}_{j_1\ldots j_n})\nabla^n_{j_1\ldots j_n}\partial_t^m\bar w
-\nabla\cdot(\Aa\phi^{n,m}_{j_1\ldots j_n}\ee_{j_{n+1}})\nabla^{n+1}_{j_1\ldots j_{n+1}}\partial_t^m\bar w\\
-(\ee_{j_{n+1}}\cdot\Aa\nabla\phi^{n,m}_{j_1\ldots j_n})\nabla^{n+1}_{j_1\ldots j_{n+1}}\partial_t^m\bar w
-(\ee_{j_{n+2}}\cdot\Aa\phi^{n,m}_{j_1\ldots j_n}\ee_{j_{n+1}})\nabla^{n+2}_{j_1\ldots j_{n+2}}\partial_t^m\bar w.
\end{multline*}
Inserting the defining equation for the hyperbolic corrector $\phi^{n,m}$, cf.~Definition~\ref{defi:BC}, we get for all $n\ge1$ and $m\ge0$,
\begin{multline*}
(\partial_t^2-\nabla\cdot\Aa\nabla)\big(\phi^{n,m}\odot\nabla^n\partial_t^m\bar w\big)\,=\,\phi^{n,m}\odot\nabla^n\partial_t^{m+2}\bar w-\phi^{n,m-2}\odot\nabla^n\partial_t^m\bar w\\
+\nabla\cdot(\Aa\phi^{n-1,m}_{j_1\ldots j_{n-1}}e_{j_n})\nabla^n_{j_1\ldots j_n}\partial_t^m\bar w
-\nabla\cdot(\Aa\phi^{n,m}_{j_1\ldots j_n}\ee_{j_{n+1}})\nabla^{n+1}_{j_1\ldots j_{n+1}}\partial_t^m\bar w\\
+(e_{j_n}\cdot\Aa\nabla\phi_{j_1\ldots j_{n-1}}^{n-1,m})\nabla^n_{j_1\ldots j_n}\partial_t^m\bar w
-(\ee_{j_{n+1}}\cdot\Aa\nabla\phi^{n,m}_{j_1\ldots j_n})\nabla^{n+1}_{j_1\ldots j_{n+1}}\partial_t^m\bar w\\
+(e_{j_n}\cdot\Aa\phi_{j_1\ldots j_{n-2}}^{n-2,m}e_{j_{n-1}})\nabla^n_{j_1\ldots j_n}\partial_t^m\bar w
-(\ee_{j_{n+2}}\cdot\Aa\phi^{n,m}_{j_1\ldots j_n}\ee_{j_{n+1}})\nabla^{n+2}_{j_1\ldots j_{n+2}}\partial_t^m\bar w\\
-\bar\Aa^{n-1,m}_{j_1\ldots j_{n-2}}:\nabla^2\nabla^{n-2}_{j_1\ldots j_{n-2}}\partial_t^m\bar w,
\end{multline*}
and thus, after summation over $1\le n\le\ell$ and $0\le m\le\ell-n$, recalling the definition~\eqref{eq:def-Fell} of the geometric two-scale expansion, and using $\phi^{0,0}=1$ and $\phi^{0,m}=0$ for $m>0$,
\begin{multline*}
(\partial_t^2-\nabla\cdot\Aa\nabla)H^\ell[\bar w]\,=\,
\partial_t^2\bar w-\sum_{n=1}^\ell\sum_{m=0}^{\ell-n}\bar\Aa^{n-1,m}_{j_1\ldots j_{n-2}}:\nabla^2\nabla^{n-2}_{j_1\ldots j_{n-2}}\partial_t^m\bar w\\
+\sum_{n=1}^\ell\Big(\phi^{n,\ell-n-1}\odot\nabla^n\partial_t^{\ell+1-n}\bar w
+\phi^{n,\ell-n}\odot\nabla^n\partial_t^{\ell+2-n}\bar w\Big)\\
-\sum_{n=1}^{\ell}\Big(\nabla\cdot(\Aa\phi_{j_1\ldots j_{n}}^{n,\ell-n}e_{j_{n+1}})+e_{j_{n+1}}\cdot\Aa\nabla\phi_{j_1\ldots j_{n}}^{n,\ell-n}\Big)\nabla^{n+1}_{j_1\ldots j_{n+1}}\partial_t^{\ell-n}\bar w\\
-\sum_{n=1}^{\ell}\Big((e_{j_{n+1}}\cdot\Aa \phi^{n-1,\ell-n}_{j_1\ldots j_{n-1}}e_{j_{n}})\nabla^{n+1}_{j_1\ldots j_{n+1}}\partial_t^{\ell-n}\bar w+(e_{j_{n+2}}\cdot\Aa \phi^{n,\ell-n}_{j_1\ldots j_n}e_{j_{n+1}})\nabla^{n+2}_{j_1\ldots j_{n+2}}\partial_t^{\ell-n}\bar w\Big).
\end{multline*}
As $\bar\Aa^{0,m}=0$ for all $m\ge0$, the second right-hand side term can be rewritten as
\begin{multline*}
\sum_{n=1}^{\ell}\sum_{m=0}^{\ell-n}\bar\Aa_{j_1\ldots j_{n-2}}^{n-1,m}:\nabla^2\nabla^{n-2}_{j_1\ldots j_{n-2}}\partial_t^m\bar w
\,=\,\sum_{n=1}^{\ell-1}\sum_{m=0}^{\ell-1-n}\bar\Aa_{j_1\ldots j_{n-1}}^{n,m}:\nabla^2\nabla^{n-1}_{j_1\ldots j_{n-1}}\partial_t^m\bar w\\
=\sum_{n=1}^{\ell}\sum_{m=0}^{\ell-n}\nabla\cdot(\bar\Aa^{n,m}\odot\nabla^{n-1}\partial_t^m)\nabla\bar w
-\sum_{n=1}^{\ell}\bar\Aa_{j_1\ldots j_{n-1}}^{n,\ell-n}:\nabla^2\nabla^{n-1}_{j_1\ldots j_{n-1}}\partial_t^{\ell-n}\bar w.
\end{multline*}
Further rearranging the terms,
and noting that
\begin{multline*}
\nabla\cdot(\Aa\phi_{j_1\ldots j_{n}}^{n,\ell-n}e_{j_{n+1}})\nabla^{n+1}_{j_1\ldots j_{n+1}}\partial_t^{\ell-n}\bar w+(e_{j_{n+2}}\cdot\Aa \phi^{n,\ell-n}_{j_1\ldots j_n}e_{j_{n+1}})\nabla^{n+2}_{j_1\ldots j_{n+2}}\partial_t^{\ell-n}\bar w\\
\,=\,\nabla\cdot\Big(\Aa\phi_{j_1\ldots j_{n}}^{n,\ell-n}\nabla\nabla^{n}_{j_1\ldots j_{n}}\partial_t^{\ell-n}\bar w\Big),
\end{multline*}
we are led to
\begin{multline}\label{eq:pre-eqn-Hell}
(\partial_t^2-\nabla\cdot\Aa\nabla)H^\ell[\bar w]\,=\,
\partial_t^2\bar w-\sum_{n=1}^{\ell}\sum_{m=0}^{\ell-n}\nabla\cdot(\bar\Aa^{n,m}\odot\nabla^{n-1}\partial_t^m)\nabla\bar w\\
+\sum_{n=1}^\ell\Big(\phi^{n,\ell-n-1}\odot\nabla^n\partial_t^{\ell+1-n}\bar w
+\phi^{n,\ell-n}\odot\nabla^n\partial_t^{\ell+2-n}\bar w\Big)
\\
-\sum_{n=1}^{\ell}\nabla\cdot\Big(\Aa\phi_{j_1\ldots j_{n}}^{n,\ell-n}\nabla\nabla^{n}_{j_1\ldots j_{n}}\partial_t^{\ell-n}\bar w\Big)\\
-\sum_{n=1}^{\ell}\Big(\Aa\big(\nabla\phi_{j_1\ldots j_{n}}^{n,\ell-n}+\phi^{n-1,\ell-n}_{j_1\ldots j_{n-1}}e_{j_{n}}\big)-\bar\Aa_{j_1\ldots j_{n-1}}^{n,\ell-n}\ee_{j_{n}}\Big)\cdot\nabla\nabla^{n}_{j_1\ldots j_{n}}\partial_t^{\ell-n}\bar w.
\end{multline}
The last right-hand side terms can be reformulated in terms of fluxes, cf.~Definition~\ref{defi:BC},
\begin{multline*}
{\sum_{n=1}^{\ell}\Big(\Aa\big(\nabla\phi_{j_1\ldots j_{n}}^{n,\ell-n}+\phi^{n-1,\ell-n}_{j_1\ldots j_{n-1}}e_{j_{n}}\big)-\bar\Aa_{j_1\ldots j_{n-1}}^{n,\ell-n}\ee_{j_{n}}\Big)\cdot\nabla\nabla^{n}_{j_1\ldots j_{n}}\partial_t^{\ell-n}\bar w}\\
\,=\,\sum_{n=1}^{\ell}q^{n,\ell-n}_{j_1\ldots j_n}\cdot\nabla\nabla^{n}_{j_1\ldots j_{n}}\partial_t^{\ell-n}\bar w+\sum_{n=1}^{\ell-1}\phi^{n+1,\ell-n-2}\odot\nabla^{n+1}\partial_t^{\ell-n}\bar w.
\end{multline*}
Further noting that we can write
\[q^{\ell,0}_{j_1\ldots j_\ell}\cdot\nabla\nabla^{\ell}_{j_1\ldots j_{\ell}}\bar w\,=\,\tilde q^{\ell,0}_{j_1\ldots j_\ell}\cdot\nabla\nabla^{\ell}_{j_1\ldots j_{\ell}}\bar w,\]
in terms of the modified fluxes of Lemma~\ref{lem:mod-cor},
and then inserting the definition of hyperbolic flux correctors {and using the skew-symmetry of $\sigma^{\ell,0}_{j_1\ldots j_\ell}$,} cf.~Definition~\ref{def:cocor} and Lemma~\ref{lem:mod-cor}, we deduce
\begin{multline*}
{\sum_{n=1}^{\ell}\Big(\Aa\big(\nabla\phi_{j_1\ldots j_{n}}^{n,\ell-n}+\phi^{n-1,\ell-n}_{j_1\ldots j_{n-1}}e_{j_{n}}\big)-\bar\Aa_{j_1\ldots j_{n-1}}^{n,\ell-n}\ee_{j_{n}}\Big)\cdot\nabla\nabla^{n}_{j_1\ldots j_{n}}\partial_t^{\ell-n}\bar w}\\
\,=\,-\sum_{n=0}^{\ell}\nabla_j (\sigma^{n,\ell-n}_{j_1\ldots j_n})_{jj_{n+1}}\nabla^{n+1}_{j_1\ldots j_{n+1}}\partial_t^{\ell-n}\bar w+\sum_{n=0}^{\ell-1}\phi^{n+1,\ell-n-2}\odot\nabla^{n+1}\partial_t^{\ell-n}\bar w.
\end{multline*}
Rearranging the terms and further using the skew-symmetry of $\sigma^{\ell,0}_{j_1\ldots j_\ell}$, we get
\begin{multline*}
{\sum_{n=1}^{\ell}\Big(\Aa\big(\nabla\phi_{j_1\ldots j_{n}}^{n,\ell-n}+\phi^{n-1,\ell-n}_{j_1\ldots j_{n-1}}e_{j_{n}}\big)-\bar\Aa_{j_1\ldots j_{n-1}}^{n,\ell-n}\ee_{j_{n}}\Big)\cdot\nabla\nabla^{n}_{j_1\ldots j_{n}}\partial_t^{\ell-n}\bar w}\\
\,=\,-\sum_{n=0}^{\ell}\nabla\cdot\Big( \sigma^{n,\ell-n}_{j_1\ldots j_n}\nabla\nabla^{n}_{j_1\ldots j_{n}}\partial_t^{\ell-n}\bar w\Big)
+\sum_{n=0}^{\ell-1}\sigma^{n,\ell-n}_{j_1\ldots j_n}:\nabla^2\nabla^{n}_{j_1\ldots j_{n}}\partial_t^{\ell-n}\bar w\\
+\sum_{n=0}^{\ell-1}\phi^{n+1,\ell-n-2}\odot\nabla^{n+1}\partial_t^{\ell-n}\bar w.
\end{multline*}
Inserting this into~\eqref{eq:pre-eqn-Hell}
yields the conclusion.\qed

\subsection{Proof of Lemma~\ref{lem:reform}}\label{sec:proof-reform}
We split the proof into three steps.

\medskip
\step1
Proof that for all $p\ge1$ there holds
\begin{multline}\label{eq:claim_onlyspatial}
\partial_t^2\bar W^{\ell}_\e-\sum_{k=1}^{p}\e^{2(k-1)}\sum_{n\in[\ell]^k}\sum_{m\in I^{n}_k}\Big(\prod_{j=1}^{k}\nabla\cdot\big(\bar\Aa^{n_j,m_j}\odot(\e\nabla)^{n_j-1}\big)\nabla\Big)\bar W_\e^{\ell}\\
\,=\,f+\sum_{k=1}^{p-1}\sum_{n\in[\ell]^k}\sum_{m\in J^{n}_k}\e^{|m|}\Big(\prod_{j=1}^k\nabla\cdot\big(\bar\Aa^{n_j,m_j}\odot(\e\nabla)^{n_j-1}\big)\nabla\Big)\partial_t^{|m|-2k} f\\
+\sum_{n\in[\ell]^p}\sum_{m\in J^{n}_p}\e^{|m|}\Big(\prod_{j=1}^{p}\nabla\cdot\big(\bar\Aa^{n_j,m_j}\odot(\e\nabla)^{n_j-1}\big)\nabla\Big)\partial_t^{|m|-2(p-1)}\bar W_\e^{\ell},
\end{multline}
where we have defined the following sets of indices, for all $k\ge1$ and $n=(n_1,\ldots,n_k)\in[\ell]^k:=\{1,\dots,\ell\}^k$,
\begin{align*}
I^n_k&=\Big\{m=(m_1,\dots,m_{k}):0\le m_j\le\ell-n_j~\forall j,\,\sum_{j=1}^sm_j\ge 2s~\forall s<k,\,|m|=2(k-1)\Big\},
\\
J^{n}_k&=\Big\{m=(m_1,\dots,m_k):0\le m_j\le\ell-n_j~\forall j,\,\sum_{j=1}^sm_j\ge 2s~\forall s\le k\Big\}.
\end{align*}
We argue by induction. For $p=1$, the stated identity~\eqref{eq:claim_onlyspatial} reduces to
\begin{equation*}
\partial_t^2\bar{W}_\e^{\ell}-\nabla\cdot\Big(\sum_{n=1}^\ell\bar\Aa^{n,0}\odot(\e\nabla)^{n-1}\Big)\nabla\bar W_\e^{\ell}
\,=\,f+\nabla\cdot\Big(\sum_{n=1}^\ell\sum_{m=2}^{\ell-n}\bar\Aa^{n,m}\odot(\e\nabla)^{n-1}(\e\partial_t)^{m}\Big)\nabla\bar W_\e^{\ell},
\end{equation*}
which is a simple reformulation of \eqref{eq:homog-lim}, keeping in mind that $\bar\Aa^{n,m}=0$ whenever $m$ is odd, cf.~Definition~\ref{defi:BC}.
Next, we assume that the claim~\eqref{eq:claim_onlyspatial} holds for some $p\ge1$ and we prove that it then also holds at level $p+1$. Let $n\in[\ell]^{p}$ and $m\in J_p^n$ be momentarily fixed. As by definition $|m|\ge2p$, we may use~\eqref{eq:homog-lim} in the form
\begin{equation*}
\partial_t^{|m|-2(p-1)}\bar{W}_\e^{\ell}=\partial_t^{|m|-2p}\bigg(f+\nabla\cdot\Big(\sum_{n'=1}^{\ell}\sum_{m'=0}^{\ell-n'}\bar{\Aa}^{n',m'}\odot(\e\nabla)^{n'-1}(\e\partial_t)^{m'}\Big)\nabla\bar{W}_\e^{\ell}\bigg).
\end{equation*}
Note that for $1\leq n'\leq \ell$ and $0\leq m'\leq \ell-n'$ we have the equivalences
\begin{align*}
(m,m')~\in~\left\{\begin{array}{lll}
I_{p+1}^{(n,n')}&\iff& |m|=2p,\,m'=0,\\
J_{p+1}^{(n,n')}&\iff& |m|+m'\ge 2(p+1).
\end{array}\right.
\end{align*}
Using these observations, the last term in~\eqref{eq:claim_onlyspatial} can be decomposed as
\begin{multline*}
\sum_{n\in[\ell]^p}\sum_{m\in J^{n}_p}\e^{|m|}\Big(\prod_{j=1}^{p}\nabla\cdot\big(\bar\Aa^{n_j,m_j}\odot(\e\nabla)^{n_j-1}\big)\nabla\Big)\partial_t^{|m|-2(p-1)}\bar W_\e^{\ell}\\
\,=\,\sum_{n\in[\ell]^p}\sum_{m\in J^{n}_p}\e^{|m|}\Big(\prod_{j=1}^{p}\nabla\cdot\big(\bar\Aa^{n_j,m_j}\odot(\e\nabla)^{n_j-1}\big)\nabla\Big) \partial_t^{|m|-2p} f\\
+\e^{2p}\sum_{n\in[\ell]^{p+1}}\sum_{m\in {I}^{n}_{p+1}}\Big(\prod_{j=1}^{p+1}\nabla\cdot\big(\bar\Aa^{n_j,m_j}\odot(\e\nabla)^{n_j-1}\big)\nabla\Big)\bar{W}_\e^{\ell}\\
+\sum_{n\in[\ell]^{p+1}}\sum_{m\in J^{n}_{p+1}}\e^{|m|}\Big(\prod_{j=1}^{p+1}\nabla\cdot\big(\bar\Aa^{n_j,m_j}\odot(\e\nabla)^{n_j-1}\big)\nabla\Big)\partial_t^{|m|-2p}\bar{W}_\e^{\ell}.
\end{multline*}
Inserting this expression into the induction assumption~\eqref{eq:claim_onlyspatial} yields the conclusion.

\medskip
\step2 Reformulation.\\
The definition of the coefficients $\{\bar\bb^n\}_n$ in the statement leads to
\begin{eqnarray*}
\lefteqn{\nabla\cdot\Big(\sum_{n=1}^{\ell}\bar\bb^n\odot(\e\nabla)^{n-1}\Big)\nabla}\\
&=&\sum_{r=1}^{\ell}\sum_{k=1}^{\lceil r/2\rceil}\e^{2(k-1)}\sum_{n\in[\ell]^k\atop k+|n|=r+1}\sum_{m\in I_k^n}\,\prod_{j=1}^k\nabla\cdot \big(\bar\Aa^{n_j,m_j}\odot(\e\nabla)^{n_j-1}\big)\nabla\\
&=&\sum_{k=1}^{\lceil\ell/2\rceil}\e^{2(k-1)}\sum_{n\in[\ell]^k\atop |n|\le\ell-k+1}\sum_{m\in I_k^n}\,\prod_{j=1}^k\nabla\cdot \big(\bar\Aa^{n_j,m_j}\odot(\e\nabla)^{n_j-1}\big)\nabla.
\end{eqnarray*}
{(Here we have used the following observation: for $m=(m_1,\ldots,m_k)$ and $n=(n_1,\ldots,n_k)$ with $m_j\ge0$ and $n_j\ge1$ for all $j$, the restrictions $|m|=2(k-1)$ and $k+|n|=r+1$ entail $m_j+n_j\le r$, thus showing that the index set $I_k$ in the definition of the coefficients~$\{\bar\bb^n\}_n$ is indeed interchangeable with $I_k^n$ here.)}
Choosing $p:=\lceil\frac\ell2\rceil$ in~\eqref{eq:claim_onlyspatial} and inserting the above, we find
\begin{multline}\label{eq:rewr-claim_onlyspatial}
\partial_t^2\bar{W}_\e^{\ell}-\nabla\cdot\Big(\sum_{n=1}^{\ell}\bar\bb^n\odot(\e\nabla)^{n-1}\Big)\nabla\bar W_\e^{\ell}\,=\,\nabla \cdot Q_\e^\ell\\
+f+\sum_{k=1}^{p-1}\sum_{n\in[\ell]^k}\sum_{m\in J^{n}_k}\e^{|m|}\Big(\prod_{j=1}^k\nabla\cdot\big(\bar\Aa^{n_j,m_j}\odot(\e\nabla)^{n_j-1}\big)\nabla\Big)\partial_t^{|m|-2k} f,
\end{multline}
where the remainder $\nabla \cdot Q_\e^\ell$ is given by
\begin{multline*}
Q_\e^\ell\,:=\\
\sum_{n\in[\ell]^p}\sum_{m\in J^{n}_p}\e^{|m|} \big(\bar\Aa^{n_1,m_1}\odot(\e\nabla)^{n_1-1}\big)\nabla \Big(\prod_{j=2}^{p}\nabla\cdot\big(\bar\Aa^{n_j,m_j}\odot(\e\nabla)^{n_j-1}\big)\nabla\Big)\partial_t^{|m|-2(p-1)}\bar W_\e^{\ell}\\
+\sum_{k=1}^{p}\e^{2(k-1)}\sum_{n\in[\ell]^k\atop |n|\ge\ell-k+2}\sum_{m\in I^{n}_k}\big(\bar\Aa^{n_1,m_1}\odot(\e\nabla)^{n_1-1}\big)\nabla \Big(\prod_{j=2}^{k}\nabla\cdot\big(\bar\Aa^{n_j,m_j}\odot(\e\nabla)^{n_j-1}\big)\nabla\Big)\bar W_\e^{\ell}.
\end{multline*}
(We use the standard convention $\prod_{j=l}^k=1$ if $l>k$.)
In order to estimate the remainder~$Q_{\e}^\ell$, let us first examine the $\e$-scaling and the number of derivatives of $\bar W_\e^\ell$ appearing in each of the two contributions in its definition:
\begin{enumerate}[---]
\item In the first contribution in the definition of $Q_\e^\ell$, the terms have scaling $\e^{|n|+|m|-p}$ and involve $|n|+|m|-p+1$ space-time derivatives of $\bar W_\e^\ell$, while the condition on $m,n$ in the sum ensures the lower bound $|n|+|m|-p \ge |m|\ge 2p \ge \ell$ and the upper bound $|n|+|m|-p \le |n|+p\ell-|n|-p=p(\ell-1)=\lceil\frac\ell2\rceil (\ell-1)\le\ell^2$.
\item In the second contribution in the definition of $Q_\e^\ell$, the terms have scaling $\e^{|n|+k-2}$ and involve $|n|+k-1$ derivatives, while the condition on $n,m$ in the sum ensures the lower bound $|n|+k-2 \ge \ell$ and the upper bound $|n|+k-2 \le k\ell+k-2=\lceil\frac\ell2\rceil (\ell+1)-2\le\ell^2$.
\end{enumerate}
Hence, in view of Lemma~\ref{lem:per-cor}, we deduce for $\e\ll1$, for all $r\geq 0$,
\[|\langle D\rangle^r Q_\e^\ell|\,\lesssim\, (\e C\ell)^\ell  |\langle D\rangle^{r+\ell}\langle \e CD\rangle^{\ell^2}D\bar W_\e^\ell|.\]
Likewise, the definition of $\{\bar\cc^n\}_n$ in the statement leads to
\begin{multline*}
\e^2\nabla\cdot\Big(\sum_{n=1}^{\ell-2}\bar\cc^{n}\odot(\e D)^{n-1}\Big)\nabla\\
\,=\,\sum_{k=1}^{\lceil \ell/2\rceil-1}\sum_{n\in[\ell]^k}\sum_{m\in J_k^n\atop|n|+|m|\le\ell+k-1}\e^{|m|}\Big(\prod_{j=1}^k\nabla\cdot\big(\bar\Aa^{n_j,m_j}\odot(\e\nabla)^{n_j-1}\big)\nabla\Big)\partial_t^{|m|-2k},
\end{multline*}
which, inserted into~\eqref{eq:rewr-claim_onlyspatial}, yields
\begin{equation*}
\partial_t^2\bar{W}_\e^{\ell}-\nabla\cdot\Big(\sum_{n=1}^{\ell}\bar\bb^n\odot(\e\nabla)^{n-1}\Big)\nabla\bar W_\e^{\ell}\,=\,f+\e^2\nabla\cdot\Big(\sum_{n=1}^{\ell-2}\bar\cc^{n}\odot(\e D)^{n-1}\Big)\nabla f+\nabla \cdot (Q_\e^\ell+ R_\e^\ell),
\end{equation*}
where the additional remainder $\nabla \cdot R^\ell_\e$ (which is not zero only for $\ell\ge 3$) is given by
\begin{multline*}
R^\ell_\e
\,=\,\sum_{k=1}^{{p-1}}\sum_{n\in[\ell]^k}\sum_{m\in J_k^n\atop|n|+|m|\ge \ell+k}\e^{|m|}
\big(\bar\Aa^{n_1,m_1}\odot(\e\nabla)^{n_1-1}\big)\\
\times\nabla\Big(\prod_{j=2}^k\nabla\cdot\big(\bar\Aa^{n_j,m_j}\odot(\e\nabla)^{n_j-1}\big)\nabla\Big)\partial_t^{|m|-2k}f.
\end{multline*}
Again, in order to estimate this remainder, we first check the $\e$-scaling and the number of derivatives of $f$: the terms have scaling $\e^{|n|+|m|-k}$ and involve $|n|+|m|-k-1$ derivatives, while the condition on $n,m$ in the sum ensures the lower bound $|n|+|m|-k \ge  \ell$ and the upper bound $|n|+|m|-k \le k(\ell-1)\le (\lceil\frac\ell2\rceil-1) (\ell-1)\le\ell^2$. 
Hence, in view of Lemma~\ref{lem:per-cor}, we deduce for $\e\ll1$, for all $r\geq 0$,
\[|\langle D\rangle^r R_\e^\ell|\,\lesssim\, (\e C\ell)^\ell |\langle D\rangle^{r+\ell-1}\langle \e CD\rangle^{\ell^2}f|.\]
The conclusion with the remainder estimate follows {upon setting $E_{\e}^{\ell}=Q_{\e}^{\ell}+R_{\e}^{\ell}$}.

\medskip
\step3 Growth of the revamped coefficients: proof of~\eqref{eq:growthrevamped}.\\
We focus on $\bar\bb^p$, while the estimation of $\bar\cc^p$ is obtained similarly.
By definition of $\bar\bb^p$ in the statement, together with Lemma~\ref{lem:per-cor}, we find
\begin{align*}
|\bar{\bb}^p|
\le \sum_{k\ge1}\,\sum_{(m_1,\ldots,m_k)\in I_k}\sum_{n_1,\ldots, n_k\ge1\atop k+|n|=p+1}C^{|n|+|m|}.
\end{align*}
For all admissible indices in the above sum, we have $|n|\geq k$, and thus $p+1=k+|n|\geq 2k$, hence $k\leq \tfrac{p+1}{2}$. Moreover, any $m\in I_k$ satisfies $|m|=2(k-1)$, hence $p+1=k+|n|=|n|+|m|-k+2$. The upper bound for $k$ then yields $|n|+|m|\leq \tfrac{3p-1}{2}$, which leads us to the bound
\begin{equation*}
|\bar{\bb}^p|\,\le\,C^p\,\sharp\!\left\{r\in\N^{p+1}:\,|r|\le\tfrac{3p-1}2\right\},
\end{equation*}
and the conclusion $|\bar\bb^p|\le C^p$ follows from a simple counting argument with the balls in bins formula.
\qed

\endgroup

\subsection{Proof of Lemma~\ref{lem:invert}}
Let $\bar v_\e^{(\star),\ell}$ be the solution of a well-posed modification of~\eqref{eq:homog-lim/ref}.
In terms of
\begin{equation}\label{a+2}
f_{\e}^{(\star),\ell}(x)\,:=\,\partial_t^2\bar v^{(\star),\ell}_\e-\nabla\cdot\Big(\sum_{n=1}^\ell\sum_{m=0}^{\ell-n}\bar\Aa^{n,m}\odot(\e\nabla)^{n-1}(\e\partial_t)^m\Big)\nabla \bar v^{(\star),\ell}_\e,
\end{equation}
we aim to decompose $f_\e^{(\star),\ell}=f+\nabla \cdot F_\e^{(\star),\ell}$ for some remainder $F_\e^{(\star),\ell}$ satisfying the claimed estimates.
We split the proof into three steps, separately considering the cases $(\star)=\text{(I)}$, $\text{(II)}$, $\text{(III)}$.

\medskip
\step1 High-order filtering: $(\star)=\text{(I)}$.\\
Applying Lemma~\ref{lem:reform} to~\eqref{a+2}, we find
\begin{multline*}
\partial_t^2\bar v_\e^{\text{(I)},\ell}-\nabla\cdot\Big(\bar\Aa+\sum_{k=2}^\ell\bar\bb^k\odot(\e\nabla)^{k-1}\Big)\nabla\bar v_\e^{\text{(I)},\ell}\\
=f_\e^{\text{(I)},\ell}+\e^2\nabla\cdot\Big(\sum_{n=1}^{\ell-2}\bar\cc^n\odot(\e D)^{n-1}\Big)\nabla f_\e^{\text{(I)},\ell}+\nabla\cdot E_\e^{\text{(I)},\ell},
\end{multline*}
where the remainder satisfies for $\e\ll1$, for all $r\ge0$,
\begin{equation}\label{eq:ident-fII-ell-err}
|\langle D\rangle^rE_\e^{\text{(I)},\ell}|\,\le\,(\e C\ell)^\ell\Big(|\langle D\rangle^{r+\ell}\langle \e CD\rangle^{\ell^2}D\bar v_\e^{\text{(I)},\ell}|+|\langle D\rangle^{r+\ell-1}\langle \e CD\rangle^{\ell^2}f_\e^{\text{(I)},\ell}|\Big).
\end{equation}
As $\bar v_\e^{\text{(I)},\ell}$ is the solution of the well-posed modification of~\eqref{eq:homog-lim/ref} obtained by high-order filtering in the sense of Lemma~\ref{lem:apriori-baru-sp}, we deduce
\begin{multline*}
\chi(\e^\alpha\nabla)\bigg(f+\e^2\nabla\cdot\Big(\sum_{n=1}^{\ell-2}\bar\cc^n\odot(\e D)^{n-1}\Big)\nabla f\bigg)\\
\,=\,f_\e^{\text{(I)},\ell}+\e^2\nabla\cdot\Big(\sum_{n=1}^{\ell-2}\bar\cc^n\odot(\e D)^{n-1}\Big)\nabla f_\e^{\text{(I)},\ell}+\nabla\cdot E_\e^{\text{(I)},\ell},
\end{multline*}
or equivalently,
\begin{multline}\label{eq:ident-fII-ell}
f_\e^{\text{(I)},\ell}+\e^2\nabla\cdot\Big(\sum_{n=1}^{\ell-2}\bar\cc^n\odot(\e D)^{n-1}\Big)\nabla f_\e^{\text{(I)},\ell}\\
\,=\,f+\e^2\nabla\cdot\Big(\sum_{n=1}^{\ell-2}\bar\cc^n\odot(\e D)^{n-1}\Big)\nabla f+\nabla\cdot\hat E_\e^{\text{(I)},\ell},
\end{multline}
with modified remainder
\begin{equation}\label{eq:ident-fII-ell-re}
\hat E_\e^{\text{(I)},\ell}\,:=\,-E_\e^{\text{(I)},\ell}-(1-\chi(\e^\alpha\nabla))\triangle^{-1}\nabla\bigg(f+\e^2\nabla\cdot\Big(\sum_{n=1}^{\ell-2}\bar\cc^n\odot(\e D)^{n-1}\Big)\nabla f\bigg).
\end{equation}
In order to estimate $f_\e^{\text{(I)},\ell}$, it remains to invert the differential operator
\begin{equation}\label{eq:def-pepsD}
1+\e^2\nabla\cdot\Big(\sum_{n=1}^{\ell-2}\bar\cc^n\odot(\e D)^{n-1}\Big)\nabla
\end{equation}
in the left-hand side of~\eqref{eq:ident-fII-ell}.
Although it is not invertible in general, we may invert it approximately using {a Neumann series truncated at order $O(\e^{\ell})$. More precisely, we set
\begin{equation*}
\mathcal{O}^{\ell}_{\e}\,:=\,1+\sum_{k=1}^{\lfloor \ell/2\rfloor}(-\e^2)^k\sum_{n\in[\ell-2]^k\atop |n|\le\ell-k}~\prod_{j=1}^{k}\Big(\nabla\cdot\big(\bar\cc^{n_j}\odot(\e D)^{n_j-1}\big)\nabla\Big).	
\end{equation*}
{By a simple counting argument with the balls in bins formula, the number of terms in this sum defining $\mathcal{O}^{\ell}_{\e}$ can be bounded by
\begin{equation*}
\sum_{k=1}^{\lfloor \ell/2\rfloor}\sharp\{n\in [\ell-2]^k:|n|\le\ell-k\}\,\le\,C^\ell,
\end{equation*}
and therefore,} combining this cardinality bound
with the bound $|\bar\cc^n|\leq C^n$, we easily deduce for any function $g$, for all $r\ge0$,
\begin{equation}\label{eq:boundapproxInv}
|\langle D\rangle^r \mathcal{O}^{\ell}_{\e}g|\leq C^{\ell}|\langle D\rangle^{r}\langle\e CD\rangle^{\ell}g|.
\end{equation}
{Next, we check that $\mathcal O_\e^\ell$ is indeed an approximate inverse for the differential operator in~\eqref{eq:def-pepsD}: by definition of $\mathcal O_\e^\ell$, working out cancellations, we find for any function $g$,
\begin{equation*}
\mathcal O_\e^\ell\bigg(1+\e^2\nabla\cdot\Big(\sum_{n=1}^{\ell-2}\bar\cc^n\odot(\e D)^{n-1}\Big)\nabla\bigg)g
\,=\,g+\nabla\cdot\mathcal H_\e^\ell g,
\end{equation*}
where the remainder is explicitly given by
\begin{align*}\nabla\cdot\mathcal H_\e^\ell g\,:=\,&-\sum_{k=2}^{\lfloor\ell/2\rfloor}(-\e^2)^k\sum_{n\in[\ell-2]^k\atop\ell+1-k\le|n|\le n_k+\ell+1-k}~\prod_{j=1}^k\Big(\nabla\cdot\big(\bar\cc^{n_j}\odot(\e D)^{n_j-1}\big)\nabla\Big)g\\&-(-\e^2)^{\lfloor\ell/2\rfloor+1}\sum_{n\in[\ell-2]^{\lfloor\ell/2\rfloor+1}\atop |n|\le n_k+\ell-\lfloor\ell/2\rfloor}~\prod_{j=1}^{\lfloor\ell/2\rfloor+1}\Big(\nabla\cdot\big(\bar\cc^{n_j}\odot(\e D)^{n_j-1}\big)\nabla\Big)g,\end{align*}
and can be estimated as follows{:} for all $r\ge0$, using $|\bar\cc^n|\le C^n$, {we have that}
\begin{equation}\label{a+3}
|\langle D\rangle^r\mathcal H_\e^\ell g| \,\le\, (\e C)^\ell | \langle D\rangle^{r+2\ell} g|.	
\end{equation}
Applying} $\mathcal O_\e^\ell$ to both sides of~\eqref{eq:ident-fII-ell}, we then get
\begin{equation*}
f_\e^{\text{(I)},\ell}\,=\,f+\nabla\cdot F_\e^{\text{(I)},\ell},
\qquad F_\e^{\text{(I)},\ell}\,:=\, \mathcal H_\e^\ell (f-f_\e^{\text{(I)},\ell})+\mathcal O_\e^\ell\hat E_\e^{\text{(I)},\ell},
\end{equation*}
and the desired remainder estimate follows by combining~\eqref{a+2}, \eqref{eq:ident-fII-ell-err}, \eqref{eq:ident-fII-ell-re}, \eqref{eq:boundapproxInv}, and~\eqref{a+3}, together with the a priori estimate of Lemma~\ref{lem:apriori-baru-sp}(ii) for $\bar v_\e^{\text{(I)},\ell}$. Note that the remainder term that is local with respect to $f$ can also be bounded with an $\Ld^1(0,t)$-norm instead of an $\Ld^\infty(0,t)$-norm up to loosing a time derivative.}

\medskip
\step2 High{er}-order regularization: $(\star)=\text{(II)}$.\\
As $\bar v_\e^{\text{(II)},\ell}$ is the solution of the well-posed modification of~\eqref{eq:homog-lim/ref} obtained by high{er}-order regularization in the sense of Lemma~\ref{lem:apriori-baru-sp}, we get instead of~\eqref{eq:ident-fII-ell},
\begin{multline*}
f_\e^{\text{(II)},\ell}+\e^2\nabla\cdot\Big(\sum_{n=1}^{\ell-2}\bar\cc^n\odot(\e D)^{n-1}\Big)\nabla f_\e^{\text{(II)},\ell}\\
\,=\,f+\e^2\nabla\cdot\Big(\sum_{n=1}^{\ell-2}\bar\cc^n\odot(\e D)^{n-1}\Big)\nabla f+\nabla\cdot\hat E_\e^{\text{(II)},\ell},
\end{multline*}
with remainder
\begin{equation*}
\hat E_\e^{\text{(II)},\ell}\,:=\,-E_\e^{\text{(II)},\ell}+\kappa_\ell(\e|\nabla|)^{\ell}\nabla\bar v_\e^{\text{(II)},\ell},
\end{equation*}
where $E_\e^{\text{(II)},\ell}$ satisfies the corresponding estimate~\eqref{eq:ident-fII-ell-err}.
Now applying the same approximate inverse operator $\mathcal O_\e^\ell$ to both sides of this identity, we get
\begin{equation*}
f_\e^{\text{(II)},\ell}\,=\,f+\nabla\cdot F_\e^{\text{(II)},\ell},
\qquad F_\e^{\text{(II)},\ell}\,:=\, \mathcal H_\e^\ell (f-f_\e^{\text{(II)},\ell})+\mathcal O_\e^\ell\hat E_\e^{\text{(II)},\ell},
\end{equation*}
and the desired remainder estimate follows similarly.

\medskip
\step3 Boussinesq trick: $(\star)=\text{(III)}$.\\
By definition, $\bar v_\e^{\text{(III)},\ell}$ is the solution of the well-posed modification of~\eqref{eq:homog-lim/ref} obtained by Boussinesq trick in the sense of Lemma~\ref{lem:apriori-baru-sp}, that is,
\begin{multline}\label{eq:proxy-baru-III-rewrrr}
\partial_t^2\Big(1+\sum_{l=2}^\ell\kappa_l(\e|\nabla|)^{l-1}\Big)\bar v_\e^{\text{(III)},\ell}\\
-\nabla\cdot\bigg(\sum_{n=1}^\ell\Big(\kappa_n\bar\Aa+\sum_{l=1}^{n-1}\kappa_l\bar\bb^{n+1-l}\odot(\tfrac\nabla{|\nabla|})^{n-l}\Big)(\e|\nabla|)^{n-1}\bigg)\nabla\bar v_\e^{\text{(III)},\ell}\\
\,=\,\Big(1+\sum_{l=2}^\ell\kappa_l(\e|\nabla|)^{l-1}\Big)\bigg(f+\e^2\nabla\cdot\Big(\sum_{k=1}^{\ell-2}\bar\cc^k\odot(\e D)^{k-1}\Big)\nabla f\bigg),
\end{multline}
with $\bar v_\e^{\text{(III)},\ell}=f=0$ for $t\le0$. We recall that the coefficients $\{\kappa_l\}_l$ are defined in~\eqref{eq:choice-kappaell-B} and that well-posedness is indeed ensured by Lemma~\ref{lem:apriori-baru-sp}(ii).
Using the identity
\begin{multline*}
\sum_{n=1}^\ell \Big(\kappa_n\bar\Aa+\sum_{l=1}^{n-1} \kappa_{l}\bar \bb^{n+1-l}\odot(\tfrac{\nabla}{|\nabla|})^{n-l}\Big)(\e|\nabla|)^{n-1}\\
=\Big(1+\sum_{l=2}^{\ell}  \kappa_l (\e |\nabla|)^{l-1}\Big)\Big(\bar\Aa+\sum_{k=2}^\ell\bar\bb^k\odot(\e\nabla)^{k-1}\Big)\\
-\sum_{n=\ell+1}^{2\ell-1} \Big(
\sum_{l=n+1-\ell}^{\ell} \kappa_{l}\bar \bb^{n+1-l}\odot(\tfrac{\nabla}{|\nabla|})^{n-l}\Big)(\e|\nabla|)^{n-1},
\end{multline*}
the above equation~\eqref{eq:proxy-baru-III-rewrrr} can be alternatively written as
\begin{equation*}
\partial_t^2\bar v_\e^{\text{(III)},\ell}
-\nabla\cdot\Big(\bar\Aa+\sum_{k=2}^\ell\bar\bb^k\odot(\e\nabla)^{k-1}\Big)\nabla\bar v_\e^{\text{(III)},\ell}
\,=\,f+\e^2\nabla\cdot\Big(\sum_{k=1}^{\ell-2}\bar\cc^k\odot(\e D)^{k-1}\Big)\nabla f-\nabla\cdot G_\e^{\ell},
\end{equation*}
in terms of
\[G_\e^{\ell}\,:=\,\sum_{n=\ell+1}^{2\ell-1} \Big(
\sum_{l=n+1-\ell}^{\ell} \kappa_{l}\bar \bb^{n+1-l}\odot(\tfrac{\nabla}{|\nabla|})^{n-l}\Big)(\e|\nabla|)^{n-1}\Big(1+\sum_{l=2}^{\ell}  \kappa_l (\e |\nabla|)^{l-1}\Big)^{-1}\nabla\bar v_\e^{\text{(III)},\ell},\]
where the inverse operator is obviously well-defined as $\kappa_l\ge0$ for all $l$.
We then get, instead of~\eqref{eq:ident-fII-ell},
\begin{multline*}
f_\e^{\text{(III)},\ell}+\e^2\nabla\cdot\Big(\sum_{n=1}^{\ell-2}\bar\cc^n\odot(\e D)^{n-1}\Big)\nabla f_\e^{\text{(III)},\ell}\\
\,=\,f+\e^2\nabla\cdot\Big(\sum_{k=1}^{\ell-2}\bar\cc^k\odot(\e D)^{k-1}\Big)\nabla f+\nabla\cdot\hat E_\e^{\text{(III)},\ell},
\end{multline*}
with remainder
\[\hat E_\e^{\text{(III)},\ell}\,:=\,-E_\e^{\text{(III)},\ell}-G_\e^{\ell},\]
where $E_\e^{\text{(III)},\ell}$ satisfies the corresponding estimate~\eqref{eq:ident-fII-ell-err}.
The conclusion then follows similarly as in the first two steps.\qed

\section{Relating spectral to hyperbolic correctors}\label{sec:fromGeom2Spec}

This last section is devoted to the proof of \eqref{eq:2scale-concl-perOpt}.
It relies on an algorithmic procedure to relate spectral and hyperbolic correctors.  We split the proof into two steps.

\begingroup\allowdisplaybreaks
\medskip
\step1 Reformulation of the hyperbolic two-scale expansion: for $\e\ll1$, we have for all~$\ell\ge1$,
\begin{multline}\label{eq:tadam}
\bigg\|H^\ell_\e[\bar v_\e^{\text{(I)},\ell;t}]
-\sum_{n=0}^\ell\e^{n}\mathring\psi_\ell^{n}(\tfrac{\cdot}{\e})\odot\nabla^{n}\bar{u}^{\text{(I)},\ell;t}_{\e}
\\
-\e^3\sum_{n=0}^{\ell-3}\sum_{2m=0}^{\ell-n-3}\e^{n+2m}\mathring\zeta_\ell^{n,2m}(\tfrac\cdot\e)\odot \nabla^{n+1}\partial_t^{2m}f^t\bigg\|_{H^1(\R^d)}\\
\,\le\,(\e C)^{\ell}\|\langle D\rangle^{C\ell}f\|_{\Ld^1((0,t);\Ld^2(\R^d))},
\end{multline}
where for each $n,m$ the correctors $\mathring\psi_\ell^{n}$ and $\mathring\zeta_\ell^{n,m}$ are suitable linear combinations of hyperbolic correctors $\{\phi^{n',m'}\}_{n',m'}$ with coefficients involving $\ell$ and $\{\bar\bb^{n'},\bar\cc^{n'}\}_{n'}$.

Comparing the effective equation in the spectral and in the geometric approach, cf.~\eqref{eq:homog-lim-sp} and~\eqref{eq:homog-lim/ref}, and using the well-posed modification by high-order filtering, we get
\begin{equation}\label{eq:u_and_v}
\bar v_\e^{\text{(I)},\ell}\,=\,\bar u_\e^{\text{(I)},\ell}+\e^2\nabla\cdot\Big(\sum_{k=1}^{\ell-2}\bar\cc^k\odot(\e D)^{k-1}\Big)\nabla \bar u_\e^{\text{(I)},\ell}.
\end{equation}
Inserting this into the definition~\eqref{eq:def-Fell} of the geometric two-scale expansion $H_{\e}^{\ell}[\bar v_{\e}^{\text{(I)},\ell}]$, and rearranging terms, we get
\begin{equation}\label{eq:decomp-Fell-Rell}
H^\ell_\e[\bar v_\e^{\text{(I)},\ell}]\,=\,\sum_{n=0}^\ell\sum_{m=0}^{\ell-n}\e^{n+m}z^{n,m}_\ell(\tfrac{\cdot}{\e})\odot\nabla^n\partial_t^{m}\bar u_\e^{\text{(I)},\ell}+ R^{\ell}_{\e,1}[\bar u_{\e}^{\text{(I)},\ell}],
\end{equation}
{where for each $n,m$ the corrector $z^{n,m}_\ell$ is some linear combination of hyperbolic correctors $\{\phi^{n',m'}\}_{n',m'}$ with coefficients involving~$\ell$ and~$\{\bar\cc^{n'}\}_{n'}$,} and where the remainder term is given by
\begin{multline*}
R^{\ell}_{\e,1}[\bar u_{\e}^{\text{(I)},\ell}]\,:=\\
\sum_{n=0}^\ell\sum_{m=0}^{\ell-n}\sum_{k=1}^{\ell-2}\mathds1_{n+m+k\ge\ell}\,\e^{n+m+k+1} \phi^{n,m}_{j_1\ldots j_n}(\tfrac\cdot\e)(\bar\cc^k\odot D^{k-1}):\nabla^2\nabla_{j_1\ldots j_n}^n\partial_t^{m}\bar u_\e^{\text{(I)},\ell}.
\end{multline*}
By construction, we note that $z^{0,0}_\ell=\phi^{0,0}=1$, $z^{0,m}_\ell=\phi^{0,m}=0$ for all $m\ge1$, and that~$z^{n,m}_\ell$ does not depend on $\ell$ provided $n+m<\ell$. We emphasize that the $z^{n,m}_\ell$'s do a priori not have vanishing average.
Using Lemma \ref{lem:per-cor} together with~\eqref{eq:growthrevamped}, and using the Sobolev embedding in form of~\eqref{eq:use-Sobolev-cor} to bound products with correctors, the remainder can be estimated as follows, for $a>\frac d2$,
\begin{equation*}
\|R^{\ell}_{\e,1}[\bar u_{\e}^{\text{(I)},\ell}]\|_{H^1(\R^d)}\,\le\,(\e C)^{\ell}\|\langle D\rangle^{2\ell+a-1}D\bar u_{\e}^{\text{(I)},\ell}\|_{\Ld^2(\R^d)},
\end{equation*}
and thus, by the a priori estimate of Lemma~\ref{lem:apriori-baru-sp}(ii),
\begin{equation}\label{eq:final_bound_error1}
\|R^{\ell}_{\e,1}[\bar u_{\e}^{\text{(I)},\ell;t}]\|_{H^1(\R^d)}\leq (\e C)^{\ell}\|\langle D\rangle^{2\ell+a-{1}}f\|_{\Ld^1((0,t);\Ld^2(\R^d))}.
\end{equation}

Next, using the equation~\eqref{eq:homog-lim-sp} for $\bar{u}_{\e}^{\text{(I)},\ell}$, we shall proceed to remove the time derivatives appearing in the geometric two-scale expansion~\eqref{eq:decomp-Fell-Rell}.
For that purpose, arguing by induction and successively {separating} terms of order $\e^{\ell+1}$ or higher, we {show that} the equation for $\bar{u}_{\e}^{\text{(I)},\ell}$ yields for all~\mbox{$m\ge1$},
\begin{multline}\label{eq:expresstimederivative}
\partial_t^{2m} \bar{u}^{\text{(I)},\ell}_{\e}-\sum_{1\le \beta_1,\ldots,\beta_m\le \ell\atop|\beta|\le m+\ell}\e^{|\beta|-m}\bar B^\beta(\nabla)\bar{u}^{\text{(I)},\ell}_{\e}\\
=\chi(\e^{\alpha}\nabla)\partial_t^{2(m-1)}f+\chi(\e^{\alpha}\nabla)\sum_{k=1}^{m-1}\sum_{1\le \beta_1,\ldots,\beta_k\le\ell\atop|\beta|\le k+\ell}\e^{|\beta|-k}\bar B^\beta(\nabla)\partial_t^{2(m-k-1)}f\\
+\sum_{k=2}^m\sum_{\beta\in L_k}\e^{|\beta|-k}\bar B^\beta(\nabla)\partial_t^{2(m-k)}\bar u^{\text{(I)},\ell}_\e,
\end{multline}
in terms of the index sets
\[L_k\,:=\,\Big\{\beta=(\beta_1,\dots,\beta_k):1\le\beta_j\le\ell~\forall j,~\sum_{j=1}^s\beta_j\le s+\ell~\forall s<k,~|\beta|>k+\ell\Big\},\]
where for all $k\ge1$ and $\beta=(\beta_1,\ldots,\beta_k)$ we use the short-hand notation
\begin{equation}\label{eq:not-bigBb}
\bar{B}^\beta(\nabla)\,:=\,\prod_{j=1}^k\nabla\cdot(\bar\bb^{\beta_j}\odot\nabla^{\beta_j-1})\nabla.
\end{equation}
We prove identity~\eqref{eq:expresstimederivative} by induction.
First, for $m=1$, it simply coincides with (the filtered version of) equation~\eqref{eq:homog-lim-sp} for $\bar{u}_{\e}^{\text{(I)},\ell}$.
Next, assuming that~\eqref{eq:expresstimederivative} holds for some~$m\ge1$, applying $\partial_t^2$ to both sides of the identity, and using the equation for $\bar{u}_{\e}^{\text{(I)},\ell}$ to replace~$\partial_t^2\bar{u}_{\e}^{\text{(I)},\ell}$ in the left-hand side, we find
\begin{multline*}
\partial_t^{2(m+1)} \bar{u}^{\ell}_{\e}-\sum_{1\le \beta_1,\ldots,\beta_m\le \ell\atop|\beta|\le m+\ell}\e^{|\beta|-m}\bar B^\beta(\nabla)\Big(\sum_{n=1}^\ell\e^{n-1}\nabla\cdot(\bar\bb^n\odot\nabla^{n-1})\nabla\Big)\bar{u}^{\ell}_{\e}\\
=\chi(\e^{\alpha}\nabla)\partial_t^{2m}f+\chi(\e^{\alpha}\nabla)\sum_{k=1}^{m}\sum_{1\le \beta_1,\ldots,\beta_k\le\ell\atop|\beta|\le k+\ell}\e^{|\beta|-k}\bar B^\beta(\nabla)\partial_t^{2(m-k)}f\\
+\sum_{k=2}^m\sum_{\beta\in L_k}\e^{|\beta|-k}\bar B^\beta(\nabla)\partial_t^{2(m-k+1)}\bar u^\ell_\e.
\end{multline*}
After suitably splitting powers of~$\e$ in the left-hand side, this indeed proves identity~\eqref{eq:expresstimederivative} with $m$ replaced by $m+1$. 

Now inserting~\eqref{eq:expresstimederivative} into the two-scale expansion~\eqref{eq:decomp-Fell-Rell} in order to replace time derivatives, and again {separating} terms of order $\e^{\ell+1}$ or higher, we are led to
\begin{multline}\label{eq:Fell-decomp-final-ploplop}
H^\ell_\e[\bar v_\e^{\text{(I)},\ell}]
\,=\,
\sum_{n=0}^\ell\e^{n}z^{n,0}_\ell(\tfrac{\cdot}{\e})\odot\nabla^n\bar{u}^{\text{(I)},\ell}_{\e}\\
+\sum_{n=0}^\ell\sum_{2m=2}^{\ell-n}\sum_{1\le \beta_1,\ldots,\beta_{m}\le \ell\atop n+m+|\beta|\le\ell}\e^{n+m+|\beta|}z^{n,2m}_\ell(\tfrac{\cdot}{\e})\odot \bar B^\beta(\nabla)\nabla^n\bar{u}^{\text{(I)},\ell}_{\e}\\
+\sum_{n=0}^\ell\sum_{2m=2}^{\ell-n}\e^{n+2m}z^{n,2m}_\ell(\tfrac{\cdot}{\e})\odot\chi(\e^{\alpha}\nabla)\nabla^n\partial_t^{2(m-1)}f\\
+\sum_{n=0}^\ell\sum_{2m=2}^{\ell-n}\sum_{j=1}^{m-1}\sum_{1\le \beta_1,\ldots,\beta_j\le\ell\atop n+2m+|\beta|\le j+\ell}\e^{n+2m+|\beta|-j}z^{n,2m}_\ell(\tfrac{\cdot}{\e})\odot\chi(\e^{\alpha}\nabla)\bar B^\beta(\nabla)\nabla^n\partial_t^{2(m-j-1)}f\\
+R^{\ell}_{\e,1}[\bar u_{\e}^{\text{(I)},\ell}]+R^{\ell}_{\e,2}[\bar u_{\e}^{\text{(I)},\ell}]+R^{\ell}_{\e,3}[f],
\end{multline}
where the last two remainder terms take the form
\begin{eqnarray*}
R^{\ell}_{\e,2}[\bar u_{\e}^{\text{(I)},\ell}]&:=&\sum_{n=0}^\ell\sum_{2m=2}^{\ell-n}\sum_{j=2}^m\sum_{\beta\in L_j}\e^{n+2m+|\beta|-j}z_\ell^{n,2m}\left(\tfrac{\cdot}{\e}\right)\odot\bar B^\beta(\nabla)\nabla^n\partial_t^{2(m-j)}\bar u^{\text{(I)},\ell}_\e\\
&&+\sum_{n=0}^\ell\sum_{2m=2}^{\ell-n}\sum_{1\le \beta_1,\ldots,\beta_{m}\le \ell\atop|\beta|\le m+\ell,n+m+|\beta|>\ell}\e^{n+m+|\beta|}z_\ell^{n,2m}\left(\tfrac{\cdot}{\e}\right)\odot\bar B^\beta(\nabla)\nabla^n\bar{u}^{\text{(I)},\ell}_{\e},\\
R^{\ell}_{\e,3}[f]&:=&\sum_{n=0}^\ell\sum_{2m=2}^{\ell-n}\sum_{j=1}^{m-1}\sum_{1\le \beta_1,\ldots,\beta_j\le\ell\atop |\beta|\le j+\ell,~n+2m+|\beta|>j+\ell}\e^{n+2m+|\beta|-j} z_\ell^{n,2m}(\tfrac{\cdot}{\e})\\
&&\hspace{4cm}\odot\chi(\e^{\alpha}\nabla)\bar B^\beta(\nabla)\nabla^n\partial_t^{2(m-j-1)}f.
\end{eqnarray*}
Equivalently, in view of~\eqref{eq:not-bigBb}, this can be reformulated as
\begin{multline}\label{eq:not-bigBb-better}
H^\ell_\e[\bar v_\e^{\text{(I)},\ell}]
\,=\\
\sum_{n=0}^\ell\e^{n}\mathring\psi_\ell^{n}(\tfrac{\cdot}{\e})\odot\nabla^{n}\bar{u}^{\text{(I)},\ell}_{\e}
+\e^3\sum_{n=0}^{\ell-3}\sum_{2m=0}^{\ell-n-3}\e^{n+2m}\mathring\zeta_\ell^{n,2m}(\tfrac\cdot\e)\odot\chi(\e^{\alpha}\nabla)\nabla^{n+1}\partial_t^{2m}f\\
+R^{\ell}_{\e,1}[\bar u_{\e}^{\ell}]+R^{\ell}_{\e,2}[\bar u_{\e}^{\ell}]+R^{\ell}_{\e,3}[f],
\end{multline}
{where for each $n,m$ the correctors $\mathring\psi^n_\ell$ and $\mathring\zeta^{n,2m}_\ell$ are suitable linear combinations of $\{z^{n',2m'}\}_{n',m'}$ with coefficients involving $\ell$ and $\{\bar\bb^{n'}\}_{n'}$.}
We emphasize that $\mathring\psi^n_\ell$ and $\mathring\zeta^{n,2m}_\ell$ do a priori not have vanishing average.

We turn to the estimation of the last two remainder terms in~\eqref{eq:not-bigBb-better}.
Recalling the way that $z^{n,2m}$ {depends on $\{\phi^{n',m'}\}_{n',m'}$ and on $\{\cc^{n'}\}_{n'}$}, using Lemma~\ref{lem:per-cor} together with~\eqref{eq:growthrevamped}, and using again the Sobolev embedding in form of~\eqref{eq:use-Sobolev-cor} to bound products with correctors, we find for $\e\ll1$ and $a>\frac d2$,
\begin{equation*}
\|R_{\e,2}^\ell[\bar u_\e^{\text{(I)},\ell}]\|_{H^1(\R^d)}
\,\le\,(\e C)^{\ell}\|\langle D\rangle^{3\ell+a-1}D\bar u_\e^\ell\|_{\Ld^2(\R^d)},
\end{equation*}
and thus, further combining {this} with the a priori estimate of Lemma~\ref{lem:apriori-baru-sp}(ii),
\[\|R_{\e,2}^\ell[\bar u_\e^{\text{(I)},\ell;t}]\|_{H^1(\R^d)}\,\le\,(\e C)^{\ell}\|\langle D\rangle^{3\ell+a-1}f\|_{\Ld^1((0,t);\Ld^2(\R^d))}.\]
Similarly, we find for $\e\ll1$ and $a>\frac d2$,
\[\|R_{\e,3}^\ell[f]\|_{H^1(\R^d)}\,\le\,(\e C)^{\ell}\|\langle D\rangle^{2\ell+a-1}f\|_{\Ld^2(\R^d)}.\]
Finally, we recall that the cut-off $\chi(\e^\alpha\nabla)$ can be removed in the right-hand side of~\eqref{eq:not-bigBb-better} up to higher-order errors: for $a>\frac d2$,
\begin{multline*}
{\bigg\|\e^3\sum_{n=0}^{\ell-3}\sum_{2m=0}^{\ell-n-3}\e^{n+2m}\mathring\zeta_\ell^{n,2m}(\tfrac\cdot\e)\odot(1-\chi(\e^{\alpha}\nabla))\nabla^{n+1}\partial_t^{2m}f\bigg\|_{H^1(\R^d)}}\\
\,\le\,\e^3C^\ell\|(1-\chi(\e^{\alpha}\nabla))\langle D\rangle^{\ell+a-1}f\|_{\Ld^2(\R^d)}
\,\le\,(\e C)^{\ell}\|\langle D\rangle^{\ell+{\lceil\frac{\ell}\alpha\rceil}+a-1}f\|_{\Ld^2(\R^d)}\\
{\,\le\,(\e C)^{\ell}\|\langle D\rangle^{\ell+{\lceil\frac{\ell}\alpha\rceil}+a}f\|_{\Ld^1((0,t);\Ld^2(\R^d))}} .
\end{multline*}
Inserting these bounds together with~\eqref{eq:final_bound_error1} into~\eqref{eq:not-bigBb-better}, the claim~\eqref{eq:tadam} follows.

\medskip
\step2 Conclusion.\\
In view of~\eqref{eq:tadam}, the result of Theorem~\ref{th:main-per2} yields
\begin{multline*}
\bigg\|u_\e^t
-\sum_{n=0}^\ell\e^{n}\mathring\psi_\ell^{n}(\tfrac{\cdot}{\e})\odot\nabla^{n}\bar{u}^{\text{(I)},\ell;t}_{\e}
-\e^3\sum_{n=0}^{\ell-3}\sum_{2m=0}^{\ell-n-3}\e^{n+2m}\mathring\zeta_\ell^{n,2m}(\tfrac\cdot\e)\odot\nabla^{n+1}\partial_t^{2m}f^t\bigg\|_{H^1(\R^d)}\\
\,\lesssim\,
(\e C\ell)^{\ell} \langle t\rangle \|\langle D\rangle^{C\ell^2} f\|_{\Ld^1((0,t);\Ld^2(\R^d))}.
\end{multline*}
{{Now we compare this with the result of Theorem~\ref{th:main-per}:
using~\eqref{eq:expand-gam} to expand $\gamma_{\ell}(\e\nabla)$ in the spectral two-scale expansion~\eqref{eq:goal1}, discarding terms of order $O(\e^{\ell})$, and using Lemma~\ref{lem:apriori-baru-sp}(ii) to estimate the latter, the result~\eqref{eq:2scale-concl-per} of Theorem~\ref{th:main-per} leads us to}
\begin{multline}\label{eq:expansion_small}
	\bigg\|
	\sum_{n=0}^\ell\e^{n}\widetilde{\psi}_{\ell}^n(\tfrac{\cdot}{\e})\odot\nabla^{n}\bar{u}^{\text{(I)},\ell;t}_{\e}
	+\e^3\sum_{n=0}^{\ell-3}\sum_{2m=0}^{\ell-n-3}\e^{n+2m}\widetilde{\zeta}_{\ell}^{n,2m}(\tfrac{\cdot}{\e})\odot\nabla^{n+1}\partial_t^{2m}f^t\bigg\|_{H^1(\R^d)}\\
	\,\lesssim\,
	(\e C\ell)^{\ell} \langle t\rangle \|\langle D\rangle^{C\ell^2} f\|_{\Ld^1((0,t);\Ld^2(\R^d))},
\end{multline}
in terms of the corrector differences
\begin{equation*}
	\widetilde{\psi}_{\ell}^n=\sum_{k=0}^n\gamma_{\ell}^k\otimes_s\psi^{n-k}-\mathring\psi_\ell^{n},\qquad \widetilde{\zeta}_{\ell}^{n,2m}=\sum_{k=0}^n\gamma_{\ell}^k\otimes_s\zeta^{n-k,2m}-\mathring\zeta_\ell^{n,2m},
\end{equation*}
{where $\otimes_s$ stands for symmetric tensor product.
We claim that~\eqref{eq:expansion_small} entails $\widetilde{\psi}_{\ell}^n=0$ for all $n\leq \ell-1$, $\widetilde{\zeta}_{\ell}^{n,2m}=0$ for all $n+2m+3\leq\ell-1$, and $\nabla\widetilde{\psi}_{\ell}^{\ell}=0$ and $\nabla \widetilde{\zeta}_{\ell}^{n,2m}=0$ for $n+2m+3=\ell$.
We argue by induction and prove:
\begin{equation}\label{eq:iterate-vanishcorrdiff}
\left\{\begin{array}{lll}
\text{for all $j\le \ell-1$}&:&\text{$\widetilde{\psi}_{\ell}^j=0$ and $\widetilde{\zeta}_{\ell}^{n,2m}=0$ for $n+2m+3=j$},\\
\text{for $j=\ell$}&:&\text{$\nabla\widetilde{\psi}_{\ell}^{\ell}=0$ and $\nabla \widetilde{\zeta}_{\ell}^{n,2m}=0$ for $n+2m+3=\ell$.}
\end{array}\right.
\end{equation}
Assume that this result is known to hold for $j< j_0$, given some $j_0\le\ell-1$.
Using the a priori estimates of Lemma~\ref{lem:apriori-baru-sp}(ii), first note that we have $\bar u_\e^{\operatorname{(I)},\ell}\to\bar u$ in $C^\infty_\loc(\R;H^\infty(\R^d))$ as $\e\downarrow0$, where $\bar u$ is the solution of the standard homogenized wave equation~\eqref{eq:standard-hom}. Given $h\in C_\per^\infty(Q)$, multiplying the expression in the left-hand side of~\eqref{eq:expansion_small} by $\e^{-j_0}h(\frac\cdot\e)$, and passing to the limit $\e\downarrow0$ in the $\Ld^2$-norm, we then get
\begin{equation*}
\expecm{h\widetilde{\psi}_{\ell}^{j_0}}\odot\nabla^{j_0}\bar{u}+\sum_{n+2m+3=j_0}\expecm{h\widetilde{\zeta}_{\ell}^{n,2m}}\odot\nabla^{n+1}\partial_t^{2m}f\,=\,0.
\end{equation*}
As $\bar u$ satisfies the homogenized wave equation~\eqref{eq:standard-hom},
applying the wave operator $\partial_t^2-\nabla\cdot\bar\Aa\nabla$ to this pointwise identity leads us to
\begin{equation*}
\expecm{h\widetilde{\psi}_{\ell}^{j_0}}\odot\nabla^{j_0}f+\sum_{n+2m+3=j_0}\expecm{h\widetilde{\zeta}_{\ell}^{n,2m}}\odot\nabla^{n+1}\partial_t^{2m}(\partial_t^2-\nabla\cdot\bar\Aa\nabla)f\,=\,0.
\end{equation*}
Choosing for instance $f^t(x)=\exp(-(t^2+|x|^2))$ for $t\ge1$, it is easily deduced by induction, by a linear independence argument, that $\expecm{h\widetilde{\psi}_{\ell}^{j_0}}=0$ and $\expecm{h\widetilde{\zeta}_{\ell}^{n,2m}}=0$ for all $n,m$ with \mbox{$n+2m+3=j_0$}. As $h\in C^\infty_\per(Q)$ is arbitrary, we deduce that the claim~\eqref{eq:iterate-vanishcorrdiff} also holds for $j=j_0$. The same argument can be adapted to the case~$j_0=\ell$, rather starting from the estimate on the $H^1$-norm in~\eqref{eq:expansion_small}. This ends the proof of~\eqref{eq:iterate-vanishcorrdiff}.}

In other words, we have thus proved that, given $\ell\geq1$, for all $j\le\ell-1$ and for all $n,m$ with $n+m+3\le\ell-1$, we have
\begin{eqnarray*}
\mathring\psi_\ell^j=\sum_{k=0}^j\gamma_\ell^k\otimes_s\psi^{j-k},\qquad \mathring\zeta_\ell^{n,2m}=\sum_{k=0}^n\gamma_\ell^k\otimes_s\zeta^{n-k,2m},
\end{eqnarray*}
while for $j=\ell$ and for all $n,m$ with $n+m+3=\ell$,
\begin{eqnarray*}
\nabla_i\mathring\psi_\ell^{\ell}=\sum_{k=0}^{\ell}\gamma_\ell^k\otimes_s\nabla_i\psi^{\ell-k},\qquad \nabla_i\mathring\zeta_\ell^{n,2m}=\sum_{k=0}^n\gamma_\ell^k\otimes_s\nabla_i\zeta^{n-k,2m}.
\end{eqnarray*}
Inserting this back into~\eqref{eq:tadam}, using~\eqref{eq:expand-gam} to reconstruct $\gamma_\ell(\e\nabla)$, discarding terms of order~$O(\e^\ell)$, and using Lemma~\ref{lem:apriori-baru-sp}(ii) to estimate the latter,
we may then recognize the definition~\eqref{eq:goal1} of the spectral two-scale expansion, to the effect of
\begin{equation*}
\|H^\ell_\e[\bar v_\e^{\text{(I)},\ell;t}]
-S_\e^\ell[\bar u_\e^{\text{(I)},\ell;t},f]\|_{H^1(\R^d)}
\,\lesssim\,
(\e C)^{\ell}\|\langle D\rangle^{C\ell}f\|_{\Ld^1((0,t);\Ld^2(\R^d))}.
\end{equation*}
Combined} with the result~\eqref{eq:2scale-concl-per} of Theorem~\ref{th:main-per}, this yields the conclusion~\eqref{eq:2scale-concl-perOpt}.\qed
\endgroup

\appendix
\section{Correctors in the random setting}\label{sec:rand-cor}

This section is devoted to the definition and bounds on spectral and hyperbolic correctors in the random setting.
As in the elliptic case~\cite{GNO-quant,AKM-book,Gu-17,DO1}, the main difference with the periodic setting is that only a finite number of correctors can be defined, depending both on space dimension and on mixing properties of the coefficient field.
For simplicity, we focus on the Gaussian setting of Definition~\ref{def:gauss}.
The corrector estimates below were first obtained in~\cite{GO1,Gloria-Otto-10b,GNO-quant} for the first corrector in the elliptic setting.
A proof of the present statement follows from applying iteratively the annealed Calder\'on-Zygmund estimates of \cite{DO1,DFG-22}; see in particular a similar argument in~\cite[Proof of Proposition~2.2]{DO1}. Note that the present result corrects inaccuracies of the corresponding statement given in~\cite[Proposition~C.4]{BG} for spectral correctors.

\begin{theor}\label{th:corr-bounds}
Let $\Aa$ be Gaussian with parameter $\beta>0$ in the sense of Definition~\ref{def:gauss}.
 We then define $\ell_*\,:=\,\lceil\tfrac{\beta\wedge d}2\rceil$ and
\begin{equation}\label{eq:mun-*}
\mu_n^*(x)\,:=\,\left\{\begin{array}{lll}
1&:&n<\ell_*,\\
\log(2+|x|)^\frac12&:&n=\ell_*,~\beta>d,~\text{$d$ even},\\
&&\text{or }n=\ell_*,~\beta<d,~\beta \in 2\N,\\
\log(2+|x|)&:&n=\ell_*,~\beta=d,~\text{$d$ even},\\
\langle x\rangle^\frac12&:&n=\ell_*,~\beta>d,~\text{$d$ odd},\\
\langle x\rangle^\frac12\log(2+|x|)^\frac12&:&n=\ell_*,~\beta = d,~\text{$d$ odd},\\
\langle x\rangle^{n-\frac\beta2}&:&n=\ell_*,~\beta<d,~\beta \notin 2\N.
\end{array}\right.
\end{equation}
\begin{enumerate}[\hspace{-0.15cm}(i)]
\item\emph{Spectral correctors:}
The correctors $\{\psi^n,\sigma^n\}_{0\le n< \ell_*}$ and $\{\zeta^{n,m},\tau^{n,m}\}_{n+2m<\ell_*-3}$ can be uniquely defined by the corrector equations of Section~\ref{sec:def-corr-spec} as centered stationary random fields, and in addition the correctors $\psi^n,\sigma^n$ with $n=\ell_*$ and $\zeta^{n,m},\tau^{n,m}$ with $n+2m=\ell_*-3$ can be uniquely defined as (non-stationary) random fields with centered stationary gradient.
The homogenized coefficient $\bar\bb^n$ is well-defined for $0\le n\le \ell_*$.
Moreover, the following moment bounds hold for all $q<\infty$ and $x\in\R^d$,
\[\begin{aligned}
\qquad\Big\|\Big(\fint_{B(x)}|(\psi^n,\sigma^n)|^2\Big)^\frac12\Big\|_{\Ld^q(\Omega)}&~\lesssim_q~\mu_n^*(x),\quad&&\text{for $0\le n\le \ell_*$},\\
\qquad\Big\|\Big(\fint_{B(x)}|(\nabla\psi^n,\nabla\sigma^n)|^2\Big)^\frac12\Big\|_{\Ld^q(\Omega)}&~\lesssim_q~1,\quad&&\text{for $0\le n\le \ell_*$},\\
\quad\Big\|\Big(\fint_{B(x)}|(\zeta^{n,m},\tau^{n,m})|^2\Big)^\frac12\Big\|_{\Ld^q(\Omega)}&~\lesssim_q~\mu^*_{n+2m+3}(x),\quad&&\text{for $n+2m\le \ell_*-3$},\\
\qquad\Big\|\Big(\fint_{B(x)}|(\nabla\zeta^{n,m},\nabla\tau^{n,m})|^2\Big)^\frac12\Big\|_{\Ld^q(\Omega)}&~\lesssim_q~1,\quad&&\text{for $n+2m\le \ell_*-3$}.
\end{aligned}\]
\item\emph{Hyperbolic correctors:} The correctors $\{\phi^{n,m},\sigma^{n,m}\}_{n+m<\ell_*}$ can be uniquely defined by the corrector equations of Section~\ref{sec:cor-eqn} as centered stationary random fields, and in addition the correctors $\phi^{n,m},\sigma^{n,m}$ with $n+m=\ell_*$ can be uniquely defined as (non-stationary) random fields with centered stationary gradient. The homogenized coefficient $\bar\Aa^{n,m}$ is well-defined for $n+m\le\ell_*$.
Moreover, for $n+m\le\ell_*$, the following moment bounds hold for all $q<\infty$ and $x\in\R^d$,
\begin{align*}
\qquad\Big\|\Big(\fint_{B(x)}|(\phi^{n,m},\sigma^{n,m})|^2\Big)^\frac12\Big\|_{\Ld^q(\Omega)}&~\lesssim_q~\mu_{n+m}^*(x),\\
\qquad\Big\|\Big(\fint_{B(x)}|(\nabla\phi^{n,m},\nabla\sigma^{n,m})|^2\Big)^\frac12\Big\|_{\Ld^q(\Omega)}&~\lesssim_q~1.\qedhere
\end{align*}
\end{enumerate}
\end{theor}

\section{A priori estimates for the wave equation}

We state the following general a priori estimates for linear wave equations, which are used throughout; a short proof is included for convenience. In contrast with the situation in the elliptic setting, we emphasize that putting the impulse in divergence form essentially only brings an improvement when estimating the $\Ld^2$-norm, and not the energy norm. 

\begin{lem}[A priori estimates]\label{lem:apriori}
Let $\Lc$ be a self-adjoint operator on $\Ld^2(\R^d)$ satisfying the bound $-\triangle\le\Lc\le-C_0\triangle$ for some constant $C_0<\infty$. Given
$F_1\in\Ld^1_\loc(\R^+;\Ld^2(\R^d))$ and $F_2,F_3\in W^{1,1}_\loc(\R^+;\Ld^2(\R^d))$ with $F_2|_{t=0}=F_3|_{t=0}=0$, let $z$ be the solution of the wave equation
\begin{equation*}
\left\{\begin{array}{l}
(\partial_t^2+\Lc) z\,=\,F_1+\nabla \cdot F_2+\partial_t F_3\quad\text{in $\R^d$},\\
 z|_{t=0}=\partial_t z|_{t=0}=0.
\end{array}\right.
\end{equation*}
Then, for all $t\ge 0$, we have
\[\|z^t\|_{\Ld^2(\R^d)}\,\lesssim_{C_0}\,t\|F_1\|_{\Ld^1((0,t);\Ld^2(\R^d))}+\|(F_2,F_3)\|_{\Ld^1((0,t);\Ld^2(\R^d))},\]
and
\[\|Dz^t\|_{\Ld^2(\R^d)} \,\lesssim_{C_0}\,\|(F_1,\partial_tF_2,\partial_tF_3)\|_{\Ld^1((0,t);\Ld^2(\R^d))}.\qedhere\]
\end{lem}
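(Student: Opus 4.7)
The plan is to represent the solution via Duhamel's formula in functional calculus,
\[
z^t \,=\, \int_0^t \Lc^{-1/2}\sin\bigl((t-s)\Lc^{1/2}\bigr)\Bigl(F_1^s+\nabla\cdot F_2^s+\partial_s F_3^s\Bigr)\,ds,
\]
and to estimate each contribution using a few basic operator bounds. By spectral calculus, $\|\Lc^{-1/2}\sin(r\Lc^{1/2})\|_{\Ld^2\to\Ld^2}\le r$ and $\|\cos(r\Lc^{1/2})\|_{\Ld^2\to\Ld^2}\le 1$. Moreover, the assumption $-\triangle\le\Lc\le -C_0\triangle$ gives, by a one-line quadratic form computation, the $\Ld^2$-boundedness of the Riesz-type operators $\nabla\Lc^{-1/2}$, its adjoint $\Lc^{-1/2}\nabla\cdot$, and hence of their composition $\nabla\Lc^{-1}\nabla\cdot=(\nabla\Lc^{-1/2})\circ(\Lc^{-1/2}\nabla\cdot)$, all with constants depending only on $C_0$.

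For the $\Ld^2$ estimate on $z^t$, the $F_1$ contribution is bounded in $\Ld^2$ by $\int_0^t(t-s)\|F_1^s\|_{\Ld^2}\,ds\le t\|F_1\|_{\Ld^1((0,t);\Ld^2)}$; the $F_2$ contribution, after commuting $\Lc^{-1/2}\nabla\cdot$ past $\sin$, is dominated by $\int_0^t\|F_2^s\|_{\Ld^2}\,ds$; and for $F_3$ I integrate by parts in $s$, using $\partial_s[\Lc^{-1/2}\sin((t-s)\Lc^{1/2})]=-\cos((t-s)\Lc^{1/2})$. Both boundary terms vanish (at $s=t$ because $\sin 0=0$, at $s=0$ because $F_3^0=0$), leaving $\int_0^t\cos((t-s)\Lc^{1/2})F_3^s\,ds$, bounded by $\|F_3\|_{\Ld^1((0,t);\Ld^2)}$. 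This gives the first inequality.

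For the energy estimate on $Dz^t$, I handle $\partial_t z^t$ and $\nabla z^t$ simultaneously: differentiating Duhamel produces either $\cos((t-s)\Lc^{1/2})$ or $\nabla\Lc^{-1/2}\sin((t-s)\Lc^{1/2})$ in front of each source term, both of which are $\Ld^2$-bounded operators. Thus the $F_1$ and $F_3$ pieces give directly $\|F_1\|_{\Ld^1_t\Ld^2}+\|\partial_tF_3\|_{\Ld^1_t\Ld^2}$. The delicate step, and the only real obstacle, is the $F_2$ contribution, since it formally involves the unbounded combinations $\cos((t-s)\Lc^{1/2})\nabla\cdot$ (for $\partial_t z$) and $\nabla\Lc^{-1/2}\sin((t-s)\Lc^{1/2})\nabla\cdot$ (for $\nabla z$). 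Here I rewrite the $\sin$-factor as $-\Lc^{-1}\partial_s\cos((t-s)\Lc^{1/2})$ and integrate by parts once in $s$: the integrand becomes $\nabla\Lc^{-1}\cos((t-s)\Lc^{1/2})\nabla\cdot\partial_sF_2^s$ (and analogously $\Lc^{-1/2}\sin((t-s)\Lc^{1/2})\Lc^{-1/2}\nabla\cdot\partial_sF_2^s$ for the $\partial_tz$ piece), paired with the bounded operator $\nabla\Lc^{-1}\nabla\cdot$; the boundary term at $s=0$ vanishes thanks to $F_2^0=0$, while the one at $s=t$ is $\nabla\Lc^{-1}\nabla\cdot F_2^t$, whose $\Ld^2$-norm is controlled by $\|F_2^t\|_{\Ld^2}\le\|\partial_tF_2\|_{\Ld^1((0,t);\Ld^2)}$. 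Summing up yields the second inequality.

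The entire argument is thus a clean functional-calculus computation; its only nontrivial ingredient is the one integration by parts in time on the $F_2$ piece of the energy estimate, combined with the $\Ld^2$-boundedness of $\nabla\Lc^{-1}\nabla\cdot$, which is precisely the place where the two-sided ellipticity $-\triangle\le\Lc\le -C_0\triangle$ enters the constants.
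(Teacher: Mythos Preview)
Your proof is correct. For the $\Ld^2$ bound you and the paper do the same thing: Duhamel's formula, an integration by parts in $s$ for the $F_3$ piece, and the observation that $\Lc^{-1/2}\nabla\cdot$ is $\Ld^2$-bounded (the paper phrases this as the invertibility of $\sqrt{\Lc}:\Ld^2\to\dot H^{-1}$, which is the same statement).

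For the energy bound the two arguments diverge. You stay with Duhamel and differentiate, then integrate by parts in $s$ on the $F_2$ contribution, relying on the factorization $\nabla\Lc^{-1}\cos((t-s)\Lc^{1/2})\nabla\cdot=(\nabla\Lc^{-1/2})\cos((t-s)\Lc^{1/2})(\Lc^{-1/2}\nabla\cdot)$ as a product of three bounded operators. (A minor slip: in the parenthetical expression for the $\partial_t z$ piece there is a spurious $\Lc^{-1/2}$; the correct integrand after IBP is $\sin((t-s)\Lc^{1/2})\,\Lc^{-1/2}\nabla\cdot\partial_sF_2^s$. This does not affect the argument.) The paper instead runs a direct energy computation with the modified energy
\[
\tfrac12\int_{\R^d}\big(|\partial_tz|^2+z\Lc z+2F_2\cdot\nabla z\big),
\]
whose time derivative, after one integration by parts to absorb the $\nabla\cdot F_2$ term, is bounded by $\|Dz\|_{\Ld^2}\|(F_1,\partial_tF_2,\partial_tF_3)\|_{\Ld^2}$. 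Your route is pleasantly uniform with the $\Ld^2$ argument and makes the role of the Riesz-type bounds $\nabla\Lc^{-1/2}$, $\Lc^{-1/2}\nabla\cdot$ fully explicit; the paper's energy method is shorter for this half and avoids the second integration by parts and the boundary term $\nabla\Lc^{-1}\nabla\cdot F_2^t$ altogether.
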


\begin{proof}
The assumption $-\triangle\le\Lc\le-C_0\triangle$ entails that $\sqrt\Lc$ defines a bounded linear operator $\Ld^2(\R^d)\to\dot H^{-1}(\R^d)$ with bounded inverse. We may therefore define $\tilde F_2$ as the solution of $\sqrt{\Lc} \tilde F_2=\nabla\cdot F_2$, which satisfies $\|\tilde F_2\|_{\Ld^2(\R^d)}\lesssim_{C_0}\|F_2\|_{\Ld^2(\R^d)}$.
The solution $z$ of the wave equation can then be represented in terms of Duhamel's formula,
\[z^t=\int_0^t\frac{\sin((t-s)\sqrt\Lc)}{\sqrt\Lc}\,(F_1^s+\partial_s F_3^s)\,ds+\int_0^t\sin((t-s)\sqrt\Lc)\,\tilde F_2^s\,ds.\]
Integrating by parts in the integral for $F_3$, with $F_3|_{t=0}=0$, this can be rewritten as
\[z^t=\int_0^t\frac{\sin((t-s)\sqrt\Lc)}{\sqrt\Lc}\,F_1^s\,ds+\int_0^t\sin((t-s)\sqrt\Lc)\,\tilde F_2^s\,ds+\int_0^t\cos((t-s)\sqrt\Lc)\,F_3^s\,ds,\]
and the stated $\Ld^2$-estimate follows from spectral calculus.
For the energy estimate, we rather use energy conservation in form of
\begin{eqnarray*}
\tfrac12\partial_t\int_{\R^d}\big(|\partial_tz|^2+z\Lc z+2F_2\cdot\nabla z\big)
&=&\int_{\R^d} (\partial_tz)\,(\partial^2_t+\Lc)z+\partial_t\int_{\R^d}F_2\cdot\nabla z\\
&=&\int_{\R^d}(\partial_tz)\big(F_1+\partial_tF_3+\nabla\cdot F_2\big)+\partial_t\int_{\R^d}F_2\cdot\nabla z\\
&=&\int_{\R^d}(\partial_tz)\big(F_1+\partial_tF_3\big)+\int_{\R^d}\partial_tF_2\cdot\nabla z\\
&\le&\|Dz\|_{\Ld^2(\R^d)}\|(F_1,\partial_tF_2,\partial_t F_3)\|_{\Ld^2(\R^d)},
\end{eqnarray*}
and the conclusion follows.
\end{proof}

\section*{Acknowledgements}
MD acknowledges financial support from the F.R.S.-FNRS, as well as from the European Union (ERC, PASTIS, Grant Agreement n$^\circ$101075879),
and AG from the European Research Council (ERC) under the European Union's Horizon 2020 research and innovation programme (Grant Agreement n$^\circ$864066).\footnote{{Views and opinions expressed are however those of the authors only and do not necessarily reflect those of the European Union or the European Research Council Executive Agency. Neither the European Union nor the granting authority can be held responsible for them.}}

\def\cprime{$'$} \def\cprime{$'$} \def\cprime{$'$}

\end{document}